\let\oldwidetilde\widetilde
\protected\def\widetilde{\oldwidetilde}
\let\oldtocsection=\tocsection
\let\oldtocsubsection=\tocsubsection
\let\oldtocsubsubsection=\tocsubsubsection
\renewcommand{\tocsection}[2]{\hspace{0em}\oldtocsection{#1}{#2}}
\renewcommand{\tocsubsection}[2]{\hspace{1em}\oldtocsubsection{#1}{#2}}
\renewcommand{\tocsubsubsection}[2]{\hspace{2em}\oldtocsubsubsection{#1}{#2}}
\theoremstyle{definition}
\newtheorem{nul}{}[section]
\newtheorem{dfn}[nul]{Definition}
\newtheorem{rmk}[nul]{Remark}
\newtheorem{cnstr}[nul]{Construction}
\newtheorem{cnv}[nul]{Convention}
\newtheorem{exm}[nul]{Example}
\newtheorem{rec}[nul]{Recollection}
\newtheorem*{dfn*}{Definition}
\newtheorem*{axm*}{Axiom}
\newtheorem*{ntn*}{Notation}
\newtheorem*{exm*}{Example}
\newtheorem*{exr*}{Exercise}
\newtheorem*{int*}{Intuition}
\newtheorem*{qst*}{Question}
\newtheorem*{rmk*}{Remark}
\theoremstyle{plain}
\newtheorem{thm}[nul]{Theorem}
\newtheorem{prop}[nul]{Proposition}
\newtheorem{lem}[nul]{Lemma}
\newtheorem{cor}[nul]{Corollary}
\newtheorem*{thm*}{Theorem}
\newtheorem*{prop*}{Proposition}
\newtheorem*{cor*}{Corollary}
\newtheorem*{lem*}{Lemma}
\newtheorem*{cnj*}{Conjecture}
\DeclareMathOperator*{\colim}{colim}
\DeclareMathOperator{\cof}{cof}
\DeclareMathOperator{\fib}{fib}
\DeclareMathOperator{\Map}{Map}
\DeclareMathOperator{\Fun}{Fun}
\DeclareMathOperator{\Mod}{Mod}
\def\Ind{\mathrm{Ind}}
\def\Pro{\mathrm{Pro}}
\def\Bar{\mathrm{Bar}}
\def\Cobar{\mathrm{Cobar}}
\def\Barn{\mathrm{Bar}^{(n)}}
\def\Cobarn{\mathrm{Cobar}^{(n)}}
\def\A{\mathbb{A}}
\def\E{\mathbb{E}}
\def\F{\mathbb{F}}
\def\R{\mathbb{R}}
\def\Ss{\mathbb{S}}
\def\Z{\mathbb{Z}}
\def\CC{\mathcal{C}}
\def\EE{\mathcal{E}}
\def\II{\mathcal{I}}
\def\gl1{\mathrm{gl}_1}
\def\o{\mathbbm{1}}
\def\Alg{\mathrm{Alg}}
\def\cAlg{\mathrm{coAlg}}
\def\Sp{\mathrm{Sp}}
\def\Syn{\mathrm{Syn}}
\def\Def{\mathrm{Def}}
\def\op{\mathrm{op}}
\def\Gr{\mathbf{Gr}}
\def\gr{\mathrm{gr}}
\def\Fil{\mathbf{Fil}}
\def\aug{\mathrm{aug}}
\newcommand{\pushout}{\arrow[ul, phantom, "\ulcorner", very near start]}
\newcommand{\wt}{\widetilde}
\def\Id{\mathrm{Id}}
\newcommand\xqed[1]{%
  \leavevmode\unskip\penalty9999 \hbox{}\nobreak\hfill
  \quad\hbox{#1}}
\newcommand\tqed{\xqed{$\triangleleft$}}
\newcommand{\NB}[1]{\todo[color=gray!40]{#1}}
\newcommand{\TODO}[1]{\todo[color=red]{#1}}
\newcommand{\NB}[1]{}
\newcommand{\TODO}[1]{}
\renewcommand{\todo}[1]{}
\renewcommand{\todo}[1]{}
\title{Multiplicative structures on Moore spectra}
\date{\today}
\author{Robert Burklund}
\address{Department of Mathematics, MIT, Cambridge, MA, USA}
\email{burklund@mit.edu}
\begin{document}

\begin{abstract}
  In this article we show that
  $\Ss/8$ is an $\E_1$-algebra,
  $\Ss/32$ is an $\E_2$-algebra,
  $\Ss/p^{n+1}$ is an $\E_n$-algebra at odd primes
  and, more generally,
  for every $h$ and $n$ there exist generalized Moore spectra of type $h$ which admit an $\E_n$-algebra structure.
\end{abstract}

\maketitle

\vbadness 5000


\section{Introduction}

A fundamental difference between ordinary commutative algebra and higher algebra is the difficulty of constructing quotient algebras. As an example, the quotient of $\Z$ by $2$ is the commutative algebra $\F_2$, while the quotient of the sphere spectrum by $2$ does not even admit a unital multiplication.\footnote{A unital multiplication on $\Ss/2$ can be ruled out by considering the Cartan formula for Steenrod squares.} At odd primes $\Ss/p$ admits a unital multiplication, but Toda showed in 1968 that there is no homotopy associative multiplication on $\Ss/3$ \cite{TodaA3}. The issues continue at larger primes too; work of Kraines \cite{Kraines} and Kochman \cite{Kochman} implies\footnote{See \cite[Example 3.3]{Ang} for an explanation of this implication.} that $\Ss/p$ admits an $\A_{p-1}$-algebra structure, but no $\A_p$-algebra structure. In particular, $\Ss/p$ is never an $\E_1$-algebra.

The situation improves somewhat if we look at the quotient by a power of $p$ instead.
In 1984, Oka constructed homotopy associative multiplications on $\Ss/4$ and $\Ss/9$ \cite{OkaMult} and much more recently, Bhattacharya and Kitchloo constructed an $\A_{p^q-1}$-algebra structure on $\Ss/p^q$ for $p$ odd (on $\Ss/p^{q+1}$ for $p=2$) \cite{BhaMoore, BhaKit}.
Ultimately, the paucity of positive results has led to a rather negative outlook on the issue of quotient algebras---an outlook which led Mark Mahowald to conjecture that for $\alpha \in \pi_*\Ss$ the quotient $\Ss/\alpha$ admits an $\E_1$-algebra structure if and only if $\alpha = 0$.
In this article we show that this conjecture is completely false.

\begin{thm} \label{thm:E1-Moore}
  The Moore spectrum $\Ss/8$ admits an $\E_1$-algebra structure.
  
  At odd primes $\Ss/p^2$ admits an $\E_1$-algebra structure.
\end{thm}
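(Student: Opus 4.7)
The plan is to develop an obstruction theory for $\E_1$-algebra structures on two-cell spectra, and reduce the existence of the structure on $\Ss/p^n$ to the vanishing of certain Massey products involving $p^n$, which can then be verified by classical calculations in $\pi_*\Ss$.

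First, I would establish a general quotient theorem of the form: if $R$ is an $\E_2$-algebra (or better) and $x \in \pi_k R$ is a central class satisfying an appropriate principal vanishing condition phrased in terms of $\langle x, x, x \rangle$ modulo $x$, then $R/x := \cof(\Sigma^k R \xrightarrow{x} R)$ admits an $\E_1$-algebra structure. The content of this theorem is far from formal: naively, obstructions to $\E_1$ (equivalently $A_\infty$) structures form an infinite Postnikov tower, and one cannot expect the triple bracket alone to control it. The essential device would be to organize the obstruction tower inside a filtered/deformation category---concretely, inside $\MU$-synthetic spectra or a suitable Postnikov-filtered deformation of $\Sp$---where $x$ becomes regular on the associated graded and $R/x$ acquires a tautological $\E_1$-structure. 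Descending back to $\Sp$ then becomes a question of trivializing a single principal class, which can be identified with the Massey product above.

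Second, I would specialize to $R = \Ss$, $x = p^n$. Here the principal obstruction lives in $\pi_1 \Ss / p^n \pi_1 \Ss$ and is controlled by $\langle p^n, p^n, p^n \rangle$. The starting point is the classical identities $\langle 2,2,2\rangle \ni \eta$ at $p=2$ and $\langle p,p,p\rangle \ni \alpha_1$ at odd primes. Using $\Z$-linearity of Massey products one obtains
\[
\langle p^n, p^n, p^n \rangle \ \subset \ p^{n-1}\cdot \langle p,p,p\rangle + (\text{indeterminacy}),
\]
and the relations $2\eta = 0$ and $p\alpha_1 = 0$ collapse the right-hand side modulo $p^n$ precisely when $n\geq 2$ at odd primes and when $n \geq 3$ at $p=2$. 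The extra power needed at $p=2$ reflects that $\eta$ is only $2$-torsion rather than $p$-torsion, explaining the jump from $\Ss/p^2$ to $\Ss/8$. After checking compatibility with the indeterminacies and with the $\E_2$-refined version of the bracket required by the general quotient theorem, the $\E_1$-structure follows.

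The main obstacle is the first step: proving that the infinite sequence of $A_n$-coherence obstructions can in fact be packaged into the single principal obstruction described above. This is where I expect the central technical work of the paper to lie, and where the synthetic/filtered machinery is essential rather than cosmetic---without it, verifying $A_\infty$-coherence would require an unbounded family of increasingly delicate Massey product calculations in $\pi_* \Ss$ rather than the single, tractable one above.
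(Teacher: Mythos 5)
Your proposal correctly intuits that the central device is a filtered/synthetic deformation of $\Sp$ in which $\Ss/p^n$ acquires an algebra structure more easily, but the mechanism you describe for getting back to $\Sp$ is not what works, and the paper does not attempt it. You propose to ``package'' the entire $A_\infty$-coherence tower into a single principal obstruction identified with a triple Massey product $\langle p^n, p^n, p^n\rangle$ mod $p^n$; there is no such collapse. What the paper actually does is retain an \emph{infinite} sequence of obstructions $\theta_k$, $k\ge 2$ (one for each ``cell'' of a free resolution of the square-zero coalgebra; see \Cref{prop:obs} and \Cref{rmk:E1-explicit}), and then choose the deformation so that each $\theta_k$ lands in a bidegree that is killed by Adams' vanishing line. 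Concretely, for $\Ss/8$ one works in $\F_2$-synthetic spectra, takes the $\E_1$-cofiber of $\wt{2}^3:\Ss^{0,3}\to\nu\Ss$, and finds $\theta_k\in[\Ss^{2k-3,k+3},\nu\Ss/\wt{2}^3]$, which vanishes for all $k$ because $k+3>\tfrac12(2k-3)+3$. The ``single obstruction'' you flag as the key uncertainty is indeed the gap in your plan: it would not be provable, and the paper's argument is organized to avoid needing it.

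Your Massey-product arithmetic also does not account for the observed thresholds. The triple bracket controls only the \emph{first} obstruction (to a unital/$A_3$-structure), and that obstruction is $\overline{Q_1}(v)$. One already has $\overline{Q_1}(4)=0$ (by the Cartan formula, $Q_1(4)=8Q_1(2)+16c\eta=0$ since $\eta$ is $2$-torsion), so $\Ss/4$ admits a unital, indeed homotopy associative (Oka), multiplication---your inclusion $\langle 2^n,2^n,2^n\rangle\subset 2^{n-1}\langle 2,2,2\rangle+\mathrm{indet}$ would already be ``satisfied'' at $n=2$. The jump from $\Ss/4$ to $\Ss/8$ is governed by the \emph{higher} obstructions $\theta_k$ for $k\ge 3$, which a triple bracket cannot see; the extra power of $2$ is precisely what is needed for the bidegrees of the $\theta_k$ to clear the $\F_2$-Adams vanishing line uniformly in $k$. (For the odd-primary $\Ss/p^2$ the paper does not use $\F_2$-synthetic spectra either; it uses a general deformation built from the $\o/v$-based Adams tower of \cite{PP21}, with the same vanishing-of-all-$\theta_k$ strategy.) Finally, the deformation you suggest ($\MU$-synthetic or Postnikov-filtered $\Sp$) would not give the needed vanishing line---the relevant line is Adams' vanishing line in the cohomology of the Steenrod algebra, which is specific to the $\F_2$-based deformation.
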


With an $\E_1$-algebra structure on $\Ss/8$ in hand we are led to ask for more.
Could there be Moore spectra which are $\E_2$-algebras?
The answer, rather shockingly, is yes.


\begin{thm} \label{thm:mult-Moore}
  Moore spectra admit the following multiplicative structures:
  \begin{itemize}
  \item $\Ss/2^{q}$ admits an $\E_n$-algebra structure for $q \geq \frac{3}{2}(n+1)$ and
  \item $\Ss/p^{q}$ admits an $\E_n$-algebra structure for $q \geq n+1$ and $p$ odd.
  \end{itemize}  
\end{thm}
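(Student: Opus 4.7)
The plan is to attack \Cref{thm:mult-Moore} via an obstruction-theoretic argument carried out inside a filtered refinement of the sphere, so that the high $p$-adic valuation of $p^q$ becomes honest high filtration that can annihilate obstructions. Concretely, I would work in a synthetic category of spectra based on $\MU$ (or $\BP$ at an odd prime), in which the ordinary sphere $\Ss$ lifts to a synthetic sphere carrying a filtration whose associated graded is computed by the Adams--Novikov $E_2$-page, and in which the map $p : \Ss \to \Ss$ lifts to a synthetic self-map of filtration $1$. Consequently $p^q$ acquires synthetic filtration $q$.

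The core technical statement I would prove is a general one: for any synthetic self-map $\widetilde{\alpha} : \Ss \to \Ss$ of filtration $\geq f(n)$, the cofiber $\Ss/\widetilde\alpha$ (and hence its underlying Moore spectrum) admits an $\E_n$-algebra structure, with $f(n) = n+1$ away from $2$ and $f(n) = \lceil \tfrac{3}{2}(n+1)\rceil$ at the prime $2$. The proof proceeds by induction on $n$, beginning from the $\E_1$ case provided by \Cref{thm:E1-Moore}. At each step one considers the space of $\E_n$-structures on $\Ss/\widetilde\alpha$ refining a given $\E_{n-1}$-structure; this space is the homotopy fiber of a map of spaces controlled by $\E_n$-André--Quillen cohomology. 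I would identify the relevant obstruction classes as being multiples of $\widetilde\alpha$ living in filtered $\Ext$ groups of $\Ss/\widetilde\alpha$-bimodules. Because the obstructions lie in a fixed degree while the filtration weight is bounded below by the filtration of $\widetilde\alpha$ (plus a contribution from the operadic operations entering the obstruction), once $q$ is large enough these groups vanish in the filtration range where the obstruction would live, and the lift exists.

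The two numerical bounds then come from tracking these filtration contributions prime-locally. At odd primes, each passage from $\E_{k-1}$ to $\E_k$ structures consumes exactly one unit of filtration, directly yielding $q \geq n+1$. At $p = 2$ the element $\eta \in \pi_1 \Ss$ has Adams--Novikov filtration $1$ and $2\eta = 0$, so multiplicative extensions allow certain obstructions normally killed by filtration to reappear shifted by $\eta$; passing through these "$\eta$-bridges" on the way up the tower forces roughly an extra half-unit of filtration per step, yielding the $\tfrac{3}{2}(n+1)$ bound.

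The main obstacle is the precise identification and vanishing of the $\E_n$-obstruction groups in synthetic filtration. One needs (i) a workable description of the $\E_n$-obstruction theory in a filtered/synthetic $\infty$-category, and (ii) a sharp estimate of the filtration of each operadic composition that appears, tight enough to give the optimal exponents $n+1$ and $\tfrac{3}{2}(n+1)$ rather than a loose bound like $2n$. In particular, at $p=2$ one must carefully audit how $\eta$ intervenes: this is where the factor $\tfrac{3}{2}$ appears and where the proof is most delicate. Once these obstruction computations are in place, specializing $\widetilde\alpha = p^q$ and $\tau$-inverting gives the required $\E_n$-algebra structures on the classical Moore spectra $\Ss/p^q$.
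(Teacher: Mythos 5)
Your high-level strategy — pass to a filtered/synthetic refinement where $p^q$ has high filtration, set up an $\E_n$-obstruction theory, and kill the obstructions using a vanishing line — is the same skeleton as the paper's argument, but several of the specific ingredients you propose would not work, and the explanation you offer for the $\tfrac{3}{2}$ is a misconception.

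First, the choice of deformation. You propose $\MU$- or $\BP$-based synthetic spectra. The paper does \emph{not} use the Adams--Novikov deformation at any prime, and for a reason: the relevant vanishing is not available there. At $p=2$ the paper works in $\F_2$-synthetic spectra, where the decisive input is Adams' vanishing line for $\Ext$ over the Steenrod algebra, of slope $\tfrac12$ (\Cref{lem:small-van}, following \cite[Prop.\ 15.8]{boundaries}). At odd primes the paper does \emph{not} use a Steenrod-algebra vanishing line at all (its slope would be $\tfrac{1}{2(p-1)}$, which would give a prime-dependent bound); instead it builds a deformation $\Def(\CC;\mathcal Q)$ categorifying the $\Ss/p$-based Adams spectral sequence, which enjoys a ``vertical'' vanishing $[\Sigma^{-s}\nu X,\nu\o/\wt v^q]=0$ for $s\ge q$ (\Cref{lem:vanishing}), yielding the clean bound $q\ge n+1$. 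Your $\MU$/$\BP$-synthetic category has neither of these vanishing statements in the form you need.

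Second, the obstruction theory. You propose an induction on $n$, promoting an $\E_{n-1}$-structure to an $\E_n$-structure at each stage with obstructions in some operadic Andr\'e--Quillen cohomology, and this is why you write ``an extra half-unit of filtration per step.'' That is not how the paper's obstruction theory is organized. The paper fixes $n$ and constructs the $\E_n$-structure directly as a sequence of $\E_n$-cell attachments in a filtered category, governed by the Koszul-dual bar construction (\Cref{prop:obs}). The obstructions $\theta_k$ (resp.\ their refinements $\theta_{k,\alpha}$ via a finite Fox--Neuwirth-type filtration of $\E_n(k)_{h\Sigma_k}$ in \Cref{lem:Dn-fil}) are indexed by \emph{arity} $k\ge 2$, not by $n$, and there is one independent bidegree estimate per $k$, not a cascading tower over $n$. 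An $\E_{n-1}\to\E_n$ induction has no clean form of this type and you give no reason to expect its obstruction groups to vanish; the burden you flag as ``the main obstacle'' is exactly the content that is missing.

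Third, the origin of the constant $\tfrac32$. You attribute it to ``$\eta$-bridges'': multiplicative extensions by $\eta$ consuming extra filtration on the way up an induction over $n$. This is not where the number comes from. The $\tfrac32$ arises from comparing the linear growth in $(w,s)$ of the bidegree of $\theta_{k,\alpha}$ against the Adams vanishing line of slope $\tfrac12$: one needs $(q-1-n)k + O(1) > \tfrac12\bigl((1+n)k + O(1)\bigr) + q$ for all $k\ge 2$, whose leading-order condition is precisely $q - 1 - n \ge \tfrac12(1+n)$, i.e.\ $q\ge \tfrac32(n+1)$. No specific role is played by $\eta$; the factor is $\bigl(1-\tfrac12\bigr)^{-1}=\ldots$ er, rather, it is the threshold at which the two slopes match. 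If you used the Adams--Novikov deformation you would not even have the slope-$\tfrac12$ line to compare against.

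In short: the overall philosophy is right, but your choice of deformation gives the wrong vanishing, your obstruction-theoretic bookkeeping is organized along the wrong axis, and your heuristic for the $2$-primary constant is incorrect.
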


The Moore spectra we have considered up to now are only the first examples of the generalized Moore spectra obtained by inductively taking quotients by higher chromatic self-maps and multiplicative structures on these objects have received a certain amount of attention (see \cite{OkaSmall, OkaGrowth, DevSmall, DevBig}).
Here too we are able to exceed all expectations.


\begin{thm} \label{thm:gen-Moore}
  For each $h$ and $n$ there exist generalized Moore spectra
  of type $h$ which admit an $\E_n$-algebra structure.
\end{thm}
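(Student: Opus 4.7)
The plan is to proceed by induction on the chromatic type $h$, using the main technical $\E_n$-quotient theorem that presumably underlies Theorem \ref{thm:mult-Moore}. The base case $h=1$ is Theorem \ref{thm:mult-Moore}: for any $n$, taking $q$ large enough (say $q \geq n+1$ odd, $q \geq \tfrac{3}{2}(n+1)$ at $2$) gives an $\E_n$-algebra structure on $\Ss/p^q$, which is a generalized Moore spectrum of type $1$.

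For the inductive step, suppose $M$ is a generalized Moore spectrum of type $h-1$ equipped with an $\E_n$-algebra structure. Since $M$ has type $h-1$, the Hopkins--Smith periodicity theorem provides a $v_{h-1}$-self map of some power $M \to \Sigma^{-d} M$; equivalently, a class $v \in \pi_d M$ whose action on $BP_*$-homology is by a power of $v_{h-1}$. Consider the iterated quotient $M/v^k$. By construction this object still has type $h$ (its $K(h-1)$-localization vanishes while its $K(h)$-localization does not), so it qualifies as a generalized Moore spectrum of type $h$. It remains to endow $M/v^k$ with an $\E_n$-algebra structure for suitable $k$.

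For this we invoke the technical engine of the paper: the same obstruction-theoretic result used to pass from $\Ss$ to $\Ss/p^q$ as an $\E_n$-algebra should apply more generally to produce an $\E_n$-$M$-algebra structure on $M/v^k$, provided $k$ is large compared to $n$ and $\lvert v\rvert$. Schematically, the obstructions to multiplicativity of $R/x^k$ as an $\E_n$-$R$-algebra lie in certain $\Ext$-groups of $\E_n$-operations on $R/x^k$, and each individual obstruction is divisible by a fixed positive power of $x$; taking $k$ sufficiently large therefore kills them all. Since $\E_n$-$M$-algebras forget to $\E_n$-algebras, this upgrades $M/v^k$ to an $\E_n$-algebra, completing the induction.

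The principal obstacle is executing the inductive step cleanly. The self map $v$ is only well-defined up to a power and up to nilpotents, so one must check that one can make a choice compatible with the existing $\E_n$-structure (at least after passing to a large enough power so that the central-lift obstructions vanish); this typically uses the nilpotence theorem applied to the commutator of $v$ with the multiplicative structure. A second, more quantitative issue is ensuring that the degree $d$ of $v$ and the obstruction-vanishing bound interact favorably, but since $k$ is allowed to grow freely (unlike $q$ in Theorem \ref{thm:mult-Moore}, where one fixes the target as $\Ss/p^q$), there is no tension and the induction goes through for all $h$ and $n$.
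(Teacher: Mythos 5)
Your overall strategy -- induction on $h$, with the Hopkins--Smith periodicity theorem feeding a self map into the paper's quotient machine -- is the same as the paper's. But the proposal has two genuine gaps in the inductive step.

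\textbf{The inductive hypothesis is off by one.} You write: ``suppose $M$ is a generalized Moore spectrum of type $h-1$ equipped with an $\E_n$-algebra structure,'' and then claim the obstruction machine produces an $\E_n$-$M$-algebra structure on $M/v^k$. But the engine (\Cref{thm:ring-quo}) takes an $\E_{m+1}$-algebra $A$ and produces an $\E_n$-$A$-algebra structure on $A/v^q$ only for $n \leq m$; that is, it \emph{loses} a degree of commutativity. An $\E_n$-algebra $M$ would only yield an $\E_{n-1}$-algebra quotient. The paper's proof handles this by taking the inductive hypothesis to be that the type $h-1$ spectrum $M$ admits an $\E_{n+1}$-algebra structure (which is available since the statement is being proved for all $n$ simultaneously), so that $\Cref{thm:ring-quo}$ applied with $m = n$ gives an $\E_n$-quotient. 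As written, your induction does not close up.

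\textbf{The unital-multiplication hypothesis is not addressed.} \Cref{thm:ring-quo} has a nontrivial hypothesis: $M/v$ must admit a (right) unital multiplication, equivalently $Q_1(v) \equiv 0 \pmod v$. This is not a consequence of taking $k$ large; for a fixed $v$ the condition either holds or fails. Your appeal to ``each individual obstruction is divisible by a fixed positive power of $x$; taking $k$ sufficiently large therefore kills them all'' is not the mechanism in the paper -- the obstruction vanishing there happens in a deformation of the module category and requires only $q > n$, not $q$ large relative to $\lvert v \rvert$, \emph{but} it is predicated on the $Q_1$ condition. The paper sidesteps the condition cleanly: it applies \Cref{thm:ring-quo} not to $v$ but to $v^2$, since by the Cartan formula (\Cref{rmk:sq-fine}) the square of any even-degree class automatically satisfies $Q_1(x^2) \equiv 0 \pmod{x^2}$. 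Your proof needs to either verify the unital-multiplication condition for the chosen $v$ or replace $v$ by $v^2$ as the paper does.

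Your remark about centrality of $v$ and a nilpotence-theorem argument is not needed: once $M$ is at least an $\E_3$-algebra (which is forced since \Cref{thm:ring-quo} requires $m \geq 2$), $\pi_*M$ is graded-commutative and the class $v \in \pi_*M$ obtained from the self map acts centrally by left multiplication, so no further compatibility check is required.
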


Each of the theorems we have stated up to this point is proved as a corollary\footnote{Except for the $2$-primary parts of Theorems \ref{thm:E1-Moore} and \ref{thm:mult-Moore} which are proved separately in \Cref{sec:even-Moore}.} of the next theorem which allows us to construct quotient algebras in great generality.

\begin{thm} \label{thm:ring-quo}
  Suppose we are given an $\E_{m+1}$-algebra $A \in \Sp$ with $m \geq 2$
  and a class $v \in \pi_{2*}(A)$ such that
  \begin{itemize}
  \item $Q_1(v) \equiv 0 \pmod{v}$\footnote{Here $Q_1$ is the power operation carried by the second cell of $\mathrm{D}_2^{\E_m}$ on an even sphere. Note that since $2Q_1(v)=0$ this condition is automatic if $2$ is invertible in $R$. Similarly, $Q_1$ satisfies a Cartan formula and therefore this condition is satisfied by squares of even degree classes (see \Cref{rmk:sq-fine}).} or equivalently
  \item the cofiber $A/v$ admits a unital multiplication.
  \end{itemize}  
  Then, $A/v^{q}$ admits an $\E_n$-$A$-algebra structure as long as $n \leq m$ and $q > n$.
\end{thm}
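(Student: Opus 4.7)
The plan is to build an $\E_n$-$A$-algebra structure on $A/v^q$ by induction on $n$, identifying the obstruction at each stage with a power operation applied to $v^q$, and then showing its vanishing modulo $v^q$ via a Cartan-type formula combined with the inequality $q > n$. Since $A$ is $\E_{m+1}$ and $n \leq m$, the $\infty$-category of $\E_n$-$A$-algebras has the usual operadic obstruction theory, and $A$ carries $\E_m$-power operations that act on $v \in \pi_{2\ast}(A)$.

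First I would handle the base case: the hypothesis $Q_1(v) \equiv 0 \pmod v$ gives a unital multiplication on $A/v$, which propagates by smashing over $A$ with $A/v^{q-1}$ to a unital $A$-module structure on $A/v^q$, handling the $\E_0$ step. For the inductive step, supposing $A/v^q$ has been given an $\E_{n-1}$-$A$-algebra structure, the obstruction to promoting to an $\E_n$-$A$-algebra structure is a single class in a derived Hom group controlled by the $\E_n$-operadic bar construction on $A/v^q$. Using operadic Koszul duality for $\E_n$ (available in this range because $A$ is $\E_{m+1}$), I would identify this obstruction with the reduction modulo $v^q$ of a power operation $Q_k(v^q)$ for some $k \leq n$, where $Q_k$ arises from the low cells of $\mathrm{D}_2^{\E_n}$ on an even-degree generator. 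The key computation is then that these classes are divisible by $v^q$ as soon as $q > n$: a Cartan-type formula for power operations on products, applied iteratively to $v^q = v \cdots v$, expresses $Q_k(v^q)$ as a sum of monomials, each carrying a factor of $v^{q-k}$ from the Cartan combinatorics; each remaining appearance of a nontrivial $Q_j(v)$ ($j \leq k$) can in turn be traded for an additional factor of $v$ using the hypothesis $Q_1(v) = v \cdot w$ together with the expressions of higher $Q_j$ in terms of $Q_1$ at the prime $2$ (or by $2$-invertibility arguments at odd primes). The inequality $q > n$ supplies the final factor needed to produce a full $v^q$ in every term.

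The main obstacle is the precise identification of the $\E_n$-obstruction class with $Q_k(v^q) \bmod v^q$: this demands a careful application of $\E_n$-operadic Koszul duality in $A$-modules, together with an analysis of the low-cell structure of $\mathrm{D}_2^{\E_n}$ on an even-degree class. Equally important is the coherent bookkeeping across the induction, ensuring that each newly constructed $\E_n$-structure genuinely restricts to the previously built $\E_{n-1}$-structure; this is standard but delicate operadic bookkeeping. Once these ingredients are in place, the $v$-divisibility computation supplies the sharp numerical bound $q > n$ claimed in the theorem, and specializing $m$ and applying the result to $A = \Ss_{(p)}$ with $v = p$ will, combined with \Cref{sec:even-Moore} for the $2$-primary refinements, yield Theorems \ref{thm:E1-Moore}, \ref{thm:mult-Moore}, and (taking $A$ to be an appropriate higher chromatic base) \ref{thm:gen-Moore}.
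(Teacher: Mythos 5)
Your proposal attempts a genuinely different route from the paper, but there are substantial gaps that I don't think can be filled.

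The paper's proof of \Cref{thm:ring-quo} is a two-line reduction: since $A$ is $\E_{m+1}$, its category of left modules $\Mod_A$ is stably $\E_m$-monoidal, and one applies \Cref{thm:En-quo} directly. The work is all in \Cref{thm:En-quo}, whose proof has nothing resembling your proposed induction on $n$ or your Cartan-formula divisibility computation. Instead the paper fixes $n$ once and for all, sets up an obstruction theory (\Cref{prop:obs}) in which the obstructions $\theta_{k,\alpha}$ are indexed by \emph{arity} $k \geq 2$ (they arise from successively attaching free $\E_n$-cells in a filtered construction), and then shows these obstruction groups vanish. Crucially, the vanishing is \emph{structural}: one works in the deformation category $\Def(\CC;\,\mathcal{Q})$ (a categorified $\o/v$-Adams spectral sequence in the sense of \cite{PP21}), where \Cref{lem:vanishing} shows $[\Sigma^{-s}\nu X, \nu\o/\wt{v}^q] = 0$ for $s \geq q$. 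The bound $q > n$ drops out from a degree count relating the filtration degree of the obstruction to the exponent $q$; the hypothesis $Q_1(v) \equiv 0 \pmod v$ is used only to produce the unital multiplication on $A/v$ needed to set up the deformation, not to kill any higher obstruction computationally.

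Your proposal has three gaps I'd flag concretely. First, promoting an $\E_{n-1}$-structure to an $\E_n$-structure is not controlled by ``a single class in a derived Hom group''---the space of such extensions is governed by an entire tower of obstructions (one for each arity in the Koszul-dual description), so the framing already doesn't match. Second, even granting a single obstruction, there is no established identification of it with $Q_k(v^q) \bmod v^q$; the paper only carries out such an identification at the very first stage, where it shows $\overline{Q_1}(v)$ equals the reduction of $Q_1(v)$, and only for the purpose of showing the two bulleted hypotheses of \Cref{thm:ring-quo} are equivalent. Third and most seriously, the claim that Cartan-type formulas plus $q > n$ force $Q_k(v^q) \equiv 0 \pmod{v^q}$ is unsubstantiated, and I'd be surprised if it held in the required generality: the Cartan formula alone only trivially handles $v^2$ (as in \Cref{rmk:sq-fine}, where one gets $\overline{Q_1}(v^2) \equiv 0 \pmod{v^2}$), and pushing it to produce full $v^q$-divisibility of arbitrary arity-$k$ operations for $q > n$ would require significantly more structure than the hypotheses give. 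If such a direct computation worked, the paper would not need the deformation machinery at all---the fact that it does is itself a signal that the naive divisibility argument fails.

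In short: you have the right cast of characters (operadic obstruction theory, Koszul duality for $\E_n$, power operations and Cartan formulas), but the load-bearing step in the paper is a filtration-degree vanishing in a deformed category, not a $\bmod\ v^q$ divisibility of a power operation, and I don't see how to repair your argument without essentially reintroducing that structure.
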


This theorem too follows from an even more general result on quotients in higher algebra where we allow the category in which we work to vary.


\begin{thm} \label{thm:En-quo}
  Suppose we are given a
  stably $\E_m$-monoidal\footnote{Meaning that we ask that the binary tensor product $- \otimes -$ on $\CC$ commutes with finite (co)limits separately in each variable.} category $\CC$ with $m \geq 2$  
  an object $\II \in \CC$ and a map $v: \II \to \o_\CC$
  such that the cofiber $\o_\CC/v$ admits a right unital multiplication.
  Then for each $n \leq m$ there exists a tower of $\E_n$-algebras and $\E_n$-algebra maps
  \[ \cdots \longrightarrow \o_\CC/v^{n+3} \longrightarrow \o_\CC/v^{n+2} \longrightarrow \o_\CC/v^{n+1}. \]
  Moreover, each $\o_\CC/v^q$ has a unique
  $v$-compatible (in the sense of \Cref{dfn:v-compatible} below) $\E_n$-algebra structure.
\end{thm}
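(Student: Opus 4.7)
I would argue by induction on $q$ using the deformation theory of $\E_n$-algebras. The inductive mechanism is the family of cofiber sequences $\II^{\otimes q} \to \o_\CC/v^{q+1} \to \o_\CC/v^q$, which I would recognize (once the inductive hypothesis equips the target with an $\E_n$-algebra structure) as square-zero extensions of $\E_n$-algebras in $\CC$. Lurie's general theory then converts the existence of a compatible $\E_n$-lift into a vanishing statement for a single obstruction class in $\pi_{-1}$ of an $\E_n$-topological Quillen cohomology group of $\o_\CC/v^q$ with coefficients in $\II^{\otimes q}$, and converts the uniqueness of the lift into a vanishing statement for the corresponding $\pi_0$ group. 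The $v$-compatibility condition of \Cref{dfn:v-compatible} should be chosen precisely so that the structure is controlled by this relative deformation theory rather than the absolute one.

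For the base case $q = n+1$, I would feed in the hypothesis that $\o_\CC/v$ admits a right unital multiplication and bootstrap upward through the intermediate quotients $\o_\CC/v^2, \ldots, \o_\CC/v^{n+1}$; at each intermediate stage the obstruction to continuing lies in a group that can be trivialized using only the ambient $\E_m$-monoidal structure of $\CC$ (recall $m \geq n$). For the inductive step, I would compute the relevant Quillen cohomology by exploiting the fact that $\o_\CC/v^q$ is cellularly built from copies of $\II^{\otimes k}$ (for $k < q$) via the $v$-adic filtration, so that the obstruction is the image of a universal class in the free $\E_n$-algebra on a single $v$-like generator of appropriate filtration weight. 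The central claim is then that this universal obstruction vanishes when $q > n$. Once that is in hand, uniqueness is the $\pi_0$-analogue of the same computation, and the tower maps $\o_\CC/v^{q+1} \to \o_\CC/v^q$ are forced to be $\E_n$-algebra maps by naturality of the $v$-compatibility condition.

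\textbf{Main obstacle.} The heart of the argument is the vanishing of the universal obstruction in the free $\E_n$-algebra for $q > n$. I expect this to be a combinatorial or operadic analysis in the $\E_n$-operad: one must enumerate the $\E_n$-operations of total arity $q+1$ applied to the generator $v$ that could potentially produce a class in the precise filtration degree where the obstruction lives, and then show that every such combination either collapses to the associative product (which is already accounted for in the structure on $\o_\CC/v^q$) or lands in filtration strictly greater than $q+1$. The threshold $q > n$ should emerge from the fact that the $\E_n$-operad only supports a bounded amount of nontrivial bracketing beyond the associative product—roughly, operations built from a single generator exhaust themselves by the $n$-th level—so once $q > n$ there is no room for a genuinely new obstruction to appear. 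I would anticipate that correctly identifying the filtration bookkeeping here, and matching the universal obstruction to a concrete operad-theoretic generator, is where the bulk of the technical work sits.
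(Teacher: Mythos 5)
Your overall plan---reduce to an obstruction theory and show the relevant obstruction group vanishes---has the right shape, but the mechanism you propose for the vanishing is not correct, and the actual key ingredient of the proof is missing from your sketch. The paper does not prove the vanishing in $\CC$ at all: the obstruction groups genuinely do not vanish there. Instead it passes to the deformation $\Def(\CC;\mathcal{Q})$ of \Cref{cnstr:renorm2}, a Patchkoria--Pstr\k{a}gowski categorification of the $\o_\CC/v$-based Adams spectral sequence, constructs a lift $\nu\o_\CC/\wt{v}^q$ of $\o_\CC/v^q$ there, and shows the obstructions land above the Adams vanishing line $[\Sigma^{-s}\nu X,\ \nu\o_\CC/\wt{v}^q] = 0$ for $s \geq q$ (\Cref{lem:vanishing}). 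The threshold $q > n$ is Adams-filtration bookkeeping against this line, not a feature of the $\E_n$-operad. Your proposed explanation---that ``operations built from a single generator exhaust themselves by the $n$-th level''---is false: $\E_n$ carries nontrivial power operations of every arity for every $n \geq 1$, and the obstruction theory of \Cref{prop:obs} accordingly produces potential obstructions $\theta_k$ for every $k \geq 2$, none of which is trivial for formal operadic reasons. Moreover, if your claimed universal vanishing held, the hypothesis that $\o_\CC/v$ admits a right unital multiplication would play no role; in fact that hypothesis is precisely what makes the epimorphism class $\mathcal{Q}$ have enough injectives (\Cref{lem:enough-inj}), hence what makes the deformation exist in the first place.

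Two secondary issues. The paper does not induct on the power $q$: it builds each $\nu\o_\CC/\wt{v}^q$ in one shot by attaching $\E_n$-cells in the filtered setting, with obstructions $\theta_{k,\alpha}$ indexed by an internal degree $k$ (\Cref{prop:obs}, \Cref{cor:obs-degs}); the cofiber sequences $\II^{\otimes q} \to \o_\CC/v^{q+1} \to \o_\CC/v^q$ appear only in the proof of the vanishing lemma. And the square-zero-extension framing is not free of cost: to write down the $\E_n$-Quillen cohomology group you need $\II^{\otimes q}$ to carry a specified $\o_\CC/v^q$-module structure in advance, and producing one essentially asks for an $\E_n$-algebra map $\o_\CC/v^q \to \o_\CC/v$---but $\o_\CC/v$ is only assumed right unital, not $\E_n$. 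The cell-attachment obstruction theory sidesteps both of these difficulties, which is part of why the paper develops it rather than invoking Lurie's cotangent-complex machinery directly.
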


\begin{rmk}\label{rmk:force-pres}
  If we let $\CC'$ denote the full subcategory of $\CC$ generated by $\o_\CC$ and $\II$ under tensor products and finite (co)limits, then there is an $\E_m$-monoidal left adjoint
  $ \Ind(\CC') \to \CC $
  which is fully faithful on $\CC'$.

  This allows us to reduce the proof of \Cref{thm:En-quo} to the case where $\CC$ is
  stable, presentably $\E_m$-monoidal and $\o_\CC$ and $\II$ are compact $\otimes$-generators.
  \tqed
\end{rmk}

The proof of \Cref{thm:En-quo} has two main inputs:
An obstruction theory for constructing $\E_n$-algebra structures on quotients which we develop in \Cref{sec:budget} and
a categorification of the Adams spectral sequence constructed by Patchkoria and Pstr\k{a}gowski in \cite{PP21} which we review in \Cref{sec:Bockstein}.
The proof of \Cref{thm:En-quo} is then quite direct:
We construct a deformation of $\CC$ which categorifies the $\o_\CC/v$-Adams spectral sequence and an object $\nu\o_\CC/\wt{v}^q$ lying over $\o_\CC/v^{q}$. Then we compute that the groups in which the obstructions to constructing an $\E_n$-algebra structure on $\nu\o_\CC/\wt{v}^q$ live vanish!

The proofs of the $2$-primary parts of Theorems \ref{thm:E1-Moore} and \ref{thm:mult-Moore} are simpler than (and somewhat different from) the proof of \Cref{thm:En-quo} and are given in \Cref{sec:even-Moore}.
In \Cref{sec:barcobar}, which may be of independent interest, we discuss bar-cobar duality for graded $\E_n$-algebras and use this duality to make the key construction used in \Cref{sec:budget}.

The reader might naturally wonder whether our results are sharp.
The following example shows that \Cref{thm:ring-quo} at least cannot be too far from sharp.

\begin{exm}
  Let $R$ be the free commutative $\F_2$-algebra on a class $x$ in degree zero.
  From \Cref{thm:ring-quo} we know that $R/x^{2(2^{k}-1)}$ admits an $\E_{2^k-2}$-algebra structure.
  On the other hand
  \[ Q_{2^k}(x^{2^{k+1} - 2}) \equiv x^{2(2^k -2)} Q_1(x)^{2^k} \not\equiv 0 \pmod{x^{2^{k+1} - 2}}, \]
  which implies that $R/x^{2(2^{k}-1)}$ is not an $\E_{2^k+1}$-$R$-algebra.
  \tqed
\end{exm}

The issue of whether \Cref{thm:mult-Moore} is sharp is more delicate.
We suspect it is sharp at odd primes, but that at $p=2$ the function $\frac{3}{2}(n+1)$ can be replaced by $n + O(1)$.

\subsection*{Conventions}

\begin{enumerate}
\item We fix three nested universes, referring to objects as small, large or huge depending on their size.
\item We write $\o_{\mathcal{E}}$ for the unit of a monoidal category $\mathcal{E}$.
\item We write $\o_{\mathcal{E}}\{ X \}$ for the free $\E_n$-algebra on an object $X \in \mathcal{E}$.
\item Throughout the body of the paper $\CC$ will be a fixed
  stable, presentably $\E_m$-monoidal category
  with compact generators $\{ \II^{\otimes k} \}_{k \geq 0}$ and $m \geq 2$.
\item We write $\CC^\Gr$ for the $\E_m$-monoidal category of graded objects in $\CC$.
  In $\CC^{\Gr}$ we write $X(1)$ for the shift of $X$ by $1$ and
  $X_k$ for the degree $k$ component of $X$.
\item We write $\CC^\Fil$ for the $\E_m$-monoidal category of filtered objects in $\CC$ and we take the convention that all filtrations are increasing.
  We write $\tau$ for the shift map $\tau : X(1) \to X$ on a filtered object.
\item We view graded objects as modules over the cofiber of the shift map $\tau$ in filtered objects and identify the associated graded functor with taking the cofiber by $\tau$.
\item We say an $\E_n$-algebra or coalgebra in $\CC^\Gr$ (or $\CC^\Fil$) is positively graded if it vanishes in negative gradings and is given by the unit in degree $0$.
\item Part of the data of a $\E_n$-monoidal structure on $\CC$ includes a $k$-fold tensor product functor
  $ \E_n(k) \times_{\Sigma_k} \CC^{\times k} \to \CC $.
  Precomposing with the diagonal gives a functor
  $ \CC \to \Fun( \E_n(k)_{h\Sigma_k}, \CC) $
  and we write $\mathrm{D}_k^{c\E_n}(-)$ for the functor
  $\CC \to \CC$ obtained by taking the limit over $\E_n(k)_{h\Sigma_k}$ (the space of unorderd configurations of $k$ points in $\R^n$).
\item When working with $\F_2$-synthetic spectra we use the convention that
  the $k$ index of $\Sigma^{k,s}$ is the topological degree and
  the $s$-index is Adams filtration.
  This has the pleasant feature that $(k,s)$ corresponds to
  $(x,y)$-coordinates in Adams spectral sequence charts.
\end{enumerate}

\subsection*{Acknowledgments}\

The author would like to thank 
Prasit Bhattacharya,
Mike Hopkins,
Ishan Levy,
Piotr Pstr\k{a}gowski and
Andy Senger for helpful conversations related to this work.
We would further like to thank Ishan Levy, Andy Senger, Haynes Miller and Allen Yuan for comments on a draft.
Special thanks goes to Shaul Barkan for pointing out several inadequacies with our original treatment of bar-cobar duality.

\section{An obstruction theory for quotients} \label{sec:budget}

In order to motivate the developments of this section let us consider an example.

\begin{exm} \label{exm:E1}
  Suppose we are given an object $\II$ in $\CC$ and a map $v : \II \to \o_\CC$ and we want to give the
  cofiber $\o_\CC/v$ an $\E_1$-algebra structure.

  A natural first step in constructing such an $\E_1$-algebra is to consider the $\E_1$-cofiber of $v$ which we write $\o_\CC/\!\!/v$.\footnote{By this we mean the pushout of the span $\o_\CC \leftarrow \o_\CC\{\II\} \to \o_\CC$ where $\II$ maps to the unit by $v$ on the left and by zero on the right.} What we would like to do is continue attaching further $\E_1$-cells to $\o_\CC/\!\!/v$ in order to eliminate the difference between $\o_\CC/v$ and $\o_\CC/\!\!/v$. In order to organize this procedure we pass to the filtered setting where we consider the $\E_1$-cofiber $\o_{\CC^\Fil}/\!\!/\tau v$ which we depict below
  \[ \cdots \longrightarrow 0 \longrightarrow \o_\CC \longrightarrow \o_\CC/v \longrightarrow (?) \longrightarrow (?) \longrightarrow \cdots. \]
  
  Through degree $1$ this is what we would expect to see in an $\E_1$-algebra structure on $\o_{\CC^\Fil}/\tau v$, but $\o_{\CC^\Fil}/\!\!/\tau v$ begins to deviate from $\o_{\CC^\Fil}/\tau v$ in degree $2$.
  To analyze this deviation we note that since the associated graded functor is $\E_m$-monoidal
  $\gr_\bullet \left( \o_{\CC^\Fil}/\!\!/\tau v \right) $ is a free graded $\E_1$-algebra on a copy of $\Sigma \II$ in degree $1$.
  This means that in degree $2$ our algebra $\o_{\CC^\Fil}/\!\!/\tau v$ has an extra copy of $\Sigma^2 \II^{\otimes 2}$ which we would like to eliminate. To do this we need to produce a lift
  \begin{center}
    \begin{tikzcd}
      & \o_{\CC^\Fil}/\!\!/\tau v \ar[d] \\
      \Sigma^2 \II^{\otimes 2}(2) \ar[r, hook] \ar[ur, dashed] & (\o_{\CC^\Fil}/\!\!/\tau v)/\tau
    \end{tikzcd}
  \end{center}
  which we can use to take a further $\E_1$-cofiber.
  In this manner we encounter our first obstruction:
  \[ \overline{Q_1}(v) \in [\Sigma^1 \II^{\otimes 2}, \o_\CC/v].\footnote{The reader can take this as the definition of $\overline{Q_1}(v)$.} \]
  If $\overline{Q_1}(v)$ vanishes, then upon taking the $\E_1$-cofiber of the associated lift we obtain a filtered $\E_1$-algebra $R$ which agrees with $\o_\CC/\tau v$ in degrees $0$, $1$ and $2$, but begins to deviate in degree $3$.  
  \tqed
\end{exm}

Our goal in this section is to extend the manipulations of \Cref{exm:E1}
into an obstruction theory for constructing an $\E_n$-algebra structure on $\o_\CC/v$.
Before that we make a short digression on $\A_2$-structures.

\begin{lem}[{cf. \cite[p.27-28]{TodaBook}, \cite{GrayProduct} and \cite{OkaSmall}}] \label{lem:A2}
  In the situation described above, if $\overline{Q_1}(v)$ vanishes,
  then $\o_\CC/v$ admits a unital multiplication.
\end{lem}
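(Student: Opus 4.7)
The plan is to produce a map $m : \o_\CC/v \otimes \o_\CC/v \to \o_\CC/v$ extending the identity along one of the unit inclusions, and to identify the obstruction to doing so with $\overline{Q_1}(v)$. Write $X := \o_\CC/v$ and $\eta : \o_\CC \to X$ for the unit. Tensoring the defining cofiber sequence $\II \xrightarrow{v} \o_\CC \to X$ on the right with $X$ gives a cofiber sequence
\[ \II \otimes X \xrightarrow{\ v \otimes 1_X\ } X \xrightarrow{\ \eta \otimes 1_X\ } X \otimes X. \]
Hence extending $1_X : X \to X$ along $\eta \otimes 1_X$ to a map $m : X \otimes X \to X$ is the same datum as a nullhomotopy of $v \otimes 1_X$ in $[\II \otimes X, X]$, and any such $m$ is (right-)unital by construction.

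Next, I would analyze $v \otimes 1_X$ using the cofiber sequence obtained by tensoring $\II \xrightarrow{v} \o_\CC \to X$ on the left with $\II$ and rotating, namely
\[ \II \longrightarrow \II \otimes X \longrightarrow \Sigma \II^{\otimes 2}. \]
The restriction of $v \otimes 1_X$ to $\II$ is the composite $\II \xrightarrow{v} \o_\CC \xrightarrow{\eta} X$, which comes with a canonical nullhomotopy from the cofiber sequence defining $X$. This canonical nullhomotopy promotes $v \otimes 1_X$ to a class in $[\Sigma \II^{\otimes 2}, X]$.

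The key step is then to identify this class with $\overline{Q_1}(v)$ as defined in \Cref{exm:E1}. Passing to the filtered $\E_1$-cofiber $\o_{\CC^\Fil}/\!\!/\tau v$ of \Cref{exm:E1} and unwinding, the degree-$2$ piece of its associated graded is the free $\E_1$-quadratic term $\Sigma^2\II^{\otimes 2}(2)$, and the attaching map of this cell against $\o_\CC/v$ in the next filtration degree is precisely the ambiguity between "$v$ times the unit inclusion" and "the unit inclusion times $v$" on $X\otimes X$ — which is exactly the class in $[\Sigma\II^{\otimes 2}, X]$ built above. So the two constructions produce the same class.

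Granted this identification, the vanishing of $\overline{Q_1}(v)$ delivers the required nullhomotopy of $v \otimes 1_X$, hence a unital multiplication $m$. The only genuine obstacle is the identification in the previous paragraph: one must carefully match the filtered/graded bookkeeping of the $\E_1$-cofiber construction (and its $Q_1$ class) with the naive cofiber-sequence obstruction extracted here. This is essentially a matter of comparing two presentations of the same obstruction and, given the references to Toda, Gray and Oka cited in the statement, is where earlier literature concentrates its work.
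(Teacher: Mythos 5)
Your approach is genuinely different from the paper's, and the difference is important because it creates a gap that the paper's proof avoids. The paper simply observes that the filtered $\E_1$-algebra $R$ built in \Cref{exm:E1} already exists once $\overline{Q_1}(v)=0$, and that its truncation to filtration degrees $\leq 2$ is a unital multiplication. This is a one-line deduction from work already done and imports the full $\E_1$-coherence of $R$, so in particular both unit triangles hold.

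Your route instead reduces the problem to nullhomotoping $v\otimes 1_X$ via the cofiber sequence $\II\otimes X \to X \to X\otimes X$, then refines this to a class in $[\Sigma\II^{\otimes 2}, X]$ via the canonical nullhomotopy on the $\II$-restriction, and finally asserts that this class is $\overline{Q_1}(v)$. That last assertion is the entire content of the lemma under your reduction, and you do not prove it: the paragraph beginning ``Passing to the filtered $\E_1$-cofiber\dots'' is a restatement of what needs to be shown, not an argument. The paper's $\overline{Q_1}(v)$ is \emph{defined} (see the footnote in \Cref{exm:E1}) as a $\tau$-Bockstein of a generator in the associated graded of $\o_{\CC^\Fil}/\!\!/\tau v$, built out of the bar construction of the free filtered $\E_1$-algebra on $\II(1)$; matching that to your cofiber-sequence class requires unwinding that bar construction in filtration degree $2$, which is not free. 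You explicitly flag this as ``the only genuine obstacle,'' which is honest, but it means the proof is incomplete as written.

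Two smaller issues. First, extending $1_X$ along $\eta\otimes 1_X : X \simeq \o_\CC\otimes X \to X\otimes X$ produces a \emph{left}-unital multiplication ($1\cdot x = x$), not a right-unital one as your parenthetical claims; and the paper's downstream uses (\Cref{lem:enough-inj}, \Cref{thm:En-quo}) require right unitality, so a mirror argument or a separate symmetry check is needed. The paper's proof sidesteps this because the truncation of a filtered $\E_1$-algebra is two-sided unital. Second, the standard argument that a nullhomotopy of $v\otimes 1_X$ can be recovered from a nullhomotopy of its lift along $\II\otimes X\to\Sigma\II^{\otimes 2}$ needs the chosen nullhomotopy on the $\II$-piece to be fixed; you do say ``canonical,'' which is fine, but it's worth being explicit that the obstruction class is only well-defined relative to that choice. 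If you want to complete this proof along your lines, the missing identification is where Toda, Gray and Oka concentrate their effort, as you note; but citing them is not the same as carrying out the comparison with the paper's specific filtered $\overline{Q_1}(v)$.
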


\begin{proof}
  The desired unital multiplication appears as the degree $\leq 2$ component of the multiplication on the filtered $\E_1$-algebra $R$ constructed in \Cref{exm:E1}.
\end{proof}

\begin{lem} \label{lem:sqz-res}
  Given an $X \in \CC$
  there exists a sequence of $\E_n$-algebras in $\CC^{\Gr}$ converging to the trivial square zero extension of $\o_{\CC^{\Gr}}$ by a copy of $\Sigma X$ placed in degree $1$
  \[ \o_{\CC^{\Gr}} = R^0 \xrightarrow{r_1} R^1 \xrightarrow{r_2} R^2 \to \cdots \to \o_{\CC^{\Gr}} \oplus \Sigma X(1) \]
  such that
  \begin{enumerate}
  \item $R^{k}$ is equivalent to $\o_{\CC^{\Gr}} \oplus \Sigma X(1)$ through degree $k$.
  \item The map $r_{k}$ fits into a pushout square of $\E_n$-algebras
    \begin{center}
      \begin{tikzcd}
        \o_{\CC^{\Gr}}\left\{ \Sigma^{-1-n} \mathrm{D}_{k}^{c\E_n} \left( \Sigma^{n+1} X(1) \right) \right\} \ar[r, "\mathrm{aug}"] \ar[d, "s_{k}"] & \o_{\CC^{\Gr}} \ar[d] \\
        R^{k-1} \ar[r, "r_{k}"] & R^{k}. \pushout
      \end{tikzcd}
    \end{center}
  \end{enumerate}  
\end{lem}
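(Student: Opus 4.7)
My plan is to construct the tower $\{R^k\}$ inductively, carrying along a compatible $\E_n$-algebra map $f_k : R^k \to \o_{\CC^{\Gr}} \oplus \Sigma X(1)$ as auxiliary data in the induction, so that the ``approximation through degree $k$'' condition acquires a concrete meaning: $f_k$ is an equivalence on gradings $\leq k$. Interpreted this way, the construction is a Postnikov-style resolution of the square-zero $\E_n$-algebra by pushouts of free $\E_n$-algebras, with attaching maps controlled by the cooperad of $\E_n$.

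For the base case I would take $R^0 = \o_{\CC^{\Gr}}$ with $f_0$ the unit of the target; both agree in non-positive gradings so condition (1) is automatic. For the inductive step, assume $(R^{k-1}, f_{k-1})$ has been built. Form the fiber $F := \fib(f_{k-1})$ in $\CC^{\Gr}$; by induction $F$ is concentrated in gradings $\geq k$. The heart of the proof is the identification
\[ F_k \simeq \Sigma^{-n}\,\mathrm{D}_k^{c\E_n}\bigl(\Sigma^{n+1}X(1)\bigr)_k, \]
which I would derive from bar-cobar duality for positively graded $\E_n$-algebras (the material of \Cref{sec:barcobar}). The point is that $\o_{\CC^{\Gr}} \oplus \Sigma X(1)$ is bar-cobar dual to an $\E_n$-coalgebra with primitives $\Sigma X(1)$; the cobar resolution of the latter is a filtered $\E_n$-algebra whose $k$-th stage agrees with the $R^{k-1}$ produced by the induction, and whose $(F_k)$-mismatch is exactly the $k$-ary cooperation $\Sigma^{-n}\mathrm{D}_k^{c\E_n}(\Sigma^{n+1}X(1))$ with the shifts dictated by the Koszul self-duality of $\E_n$.

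Granting this computation, the inclusion $F \hookrightarrow R^{k-1}$ restricted to degree $k$ supplies a canonical map of underlying objects $\Sigma^{-n}\mathrm{D}_k^{c\E_n}(\Sigma^{n+1}X(1))(k) \to R^{k-1}$, i.e.\ a class in $[\Sigma^{-1-n}\mathrm{D}_k^{c\E_n}(\Sigma^{n+1}X(1)), R^{k-1}]$ after desuspension, which by freeness extends uniquely to an $\E_n$-algebra map $s_k$. Define $R^k$ by the $\E_n$-pushout displayed in the statement (top arrow the augmentation). A direct inspection of the pushout — using that $\mathrm{D}_k^{c\E_n}$ applied to a shift of $X(1)$ is concentrated in gradings $\geq k$, so attaching these cells does not disturb anything in grading $< k$ — shows that $R^k$ still agrees with the target through grading $k-1$ and that its grading-$k$ component is obtained from that of $R^{k-1}$ by coning off $F_k$, yielding the new map $f_k$ that is an equivalence through grading $k$. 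The colimit $\colim_k R^k$ is then an equivalence onto the target in every grading, establishing convergence.

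The main obstacle, as signalled above, is the identification $F_k \simeq \Sigma^{-n}\mathrm{D}_k^{c\E_n}(\Sigma^{n+1}X(1))_k$. A naive induction running purely inside $\E_n$-algebras risks circularity, since at each stage one is trying to read off operadic information from $R^{k-1}$ itself. The cleanest route is to construct the entire tower abstractly as a cobar resolution using the duality of \Cref{sec:barcobar}, and only afterwards verify that it is built by successive pushouts of the stated form; keeping track of the suspension and grading shifts on the Koszul-dual side is the place where care is needed.
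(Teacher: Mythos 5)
Your overall strategy — build the tower inductively by coning off the fiber of $R^{k-1} \to \o \oplus \Sigma X(1)$ in degree $k$, then identify the cells using bar-cobar duality — is the same as the paper's. The paper implements this via \Cref{cnstr:free-res} (the abstract inductive tower with cells $X^k := \fib((R^{k-1})_k \to R_k)$), \Cref{lem:free-res-terms} (which identifies $X^k \simeq \Sigma^{-1-n}\Barn(R)_k$ by applying the pushout-preserving equivalence $\Barn$ to the entire tower at once), and the combination of \Cref{thm:En-bar-cobar} and \Cref{lem:cofree-to-sqz} to compute $\Barn(\o\oplus\Sigma X(1))_k \simeq \mathrm{D}_k^{c\E_n}(\Sigma^{n+1}X)$.

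There is, however, a genuine shift error in your ``heart of the proof.'' You claim $F_k \simeq \Sigma^{-n}\mathrm{D}_k^{c\E_n}(\Sigma^{n+1}X(1))_k$, but the correct identification is $F_k \simeq \Sigma^{-1-n}\mathrm{D}_k^{c\E_n}(\Sigma^{n+1}X)$. You can sanity-check this against the case $n=1$, $k=2$: there $R^1 = \o\{\Sigma X(1)\}$ (the free $\E_1$-cofiber of $0 : X(1) \to \o$), whose degree-$2$ component is $(\Sigma X)^{\otimes 2}=\Sigma^2 X^{\otimes 2}$, while $\Sigma^{-1-1}\mathrm{D}_2^{c\E_1}(\Sigma^2X)=\Sigma^{-2}(\Sigma^2X)^{\otimes 2}=\Sigma^2X^{\otimes 2}$ matches and $\Sigma^{-1}(\Sigma^2X)^{\otimes 2}=\Sigma^3X^{\otimes 2}$ does not. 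Your subsequent ``after desuspension'' step then doesn't repair this: a map $\Sigma^{-n}Y \to R^{k-1}$ is a class in $[\Sigma^{-1-n}Y,\Sigma^{-1}R^{k-1}]$, not in $[\Sigma^{-1-n}Y, R^{k-1}]$, so the claimed class for $s_k$ lands in the wrong group. With the correct shift, $F_k$ is \emph{already} $\Sigma^{-1-n}\mathrm{D}_k^{c\E_n}(\Sigma^{n+1}X)$ and the inclusion $F_k(k)\to R^{k-1}$ directly supplies $s_k$ with no further manipulation. (The shift $\Sigma^{-1-n}$ decomposes as: one $\Sigma^{-n}$ from $\Barn$ of a free algebra being a square-zero coalgebra on the $n$-fold suspension of the cell, plus one more $\Sigma^{-1}$ because the new cell appears in $\Barn(R)$ as the cofiber of $\Sigma^n X^k(k) \to \Barn(R^{k-1})$, and $\Barn(R^{k-1})_k = 0$.)

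Your worry about circularity is legitimate as a methodological concern, but your proposed fix (``construct the entire tower abstractly as a cobar resolution'') is both vaguer and harder than what the paper does: the cobar filtration on $\Cobarn(C)$ is a filtration of the underlying object by partial totalizations, and it is not obvious that its stages assemble into an $\E_n$-algebra tower by free-cell pushouts. The paper avoids the circularity more cheaply: \Cref{cnstr:free-res} makes no claim about what $X^k$ is at construction time, and \Cref{lem:free-res-terms} then identifies all the $X^k$ in a single stroke by pushing the already-built tower through $\Barn$ and using that coalgebra pushouts are computed on underlying objects. You should also double-check the phrase ``bar-cobar dual to an $\E_n$-coalgebra with primitives $\Sigma X(1)$'': the dual coalgebra is $\mathrm{coFree}(\Sigma^{n+1}X(1))$, whose degree-$1$ component is $\Sigma^{n+1}X$, not $\Sigma X$ — this is another symptom of the same shift slippage.
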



\begin{proof}
  In proving this lemma we make heavy use of the material from \Cref{sec:barcobar}.
  The resolution is from \Cref{cnstr:free-res}.
  \Cref{lem:free-res-terms} identifies the objects $X^k$ from this construction
  with $\Sigma^{-1-n}\Barn(\o \oplus \Sigma X(1))_k$.
  Then, the combination of \Cref{thm:En-bar-cobar} and \Cref{lem:cofree-to-sqz} give us equivalences
  \[ \Barn(\o \oplus \Sigma X(1))_k \simeq \mathrm{coFree}(\Sigma^{n+1}X(1))_k \simeq \mathrm{D}_k^{c\E_n}(\Sigma^{n+1}X). \]
  \qedhere

\end{proof}

\begin{prop} \label{prop:obs}
  Given a map $r: X \to \o_\CC$ in $\CC$
  there exists a sequence of inductively defined obstructions
  \[ \theta_k \in [\Sigma^{-2-n}\mathrm{D}_k^{c\E_n}(\Sigma^{n+1}X) ,\ \ \o_\CC/r ] \qquad \text{ for  } \quad k \geq 2\]
  whose vanishing allows us to inductively construct a sequence of $\E_n$-algebras
  \[ \o_\CC = \overline{R}^0 \xrightarrow{\overline{r}_1} \overline{R}^1 \xrightarrow{\overline{r}_1} \overline{R}^2 \to \cdots \to \o_\CC/r \]
  converging to an $\E_n$-algebra structure on $\o_\CC/r$,
  where each map $\overline{r}_k$ sits in a pushout square
  \begin{center}
    \begin{tikzcd}
      \o_{\CC}\left\{ \Sigma^{-1-n} \mathrm{D}_{k}^{c\E_n} \left( \Sigma^{n+1} X(1) \right) \right\} \ar[r, "\mathrm{aug}"] \ar[d, "\overline{s}_{k}"] & \o_{\CC} \ar[d] \\
      \overline{R}^{k-1} \ar[r, "\overline{r}_{k}"] & \overline{R}^{k}. \pushout
    \end{tikzcd}
  \end{center}
\end{prop}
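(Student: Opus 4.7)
The plan is to mimic the graded resolution of \Cref{lem:sqz-res} directly in $\CC$, with the obstructions $\theta_k$ measuring the failure of each attaching map to lift through the non-trivial map $r$. I proceed by induction on $k$. The base case is $\overline{R}^0 := \o_\CC$ equipped with its canonical map to $\o_\CC/r$; the first attaching map is $\overline{s}_1 = r \colon Y_1 = X \to \o_\CC$ (since $\mathrm{D}_1^{c\E_n}(Y) = Y$ and the suspensions in the definition of $Y_k$ cancel at $k=1$), so no obstruction appears.

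For the inductive step, suppose $\overline{R}^{k-1}$ has been constructed along with an $\E_n$-algebra map $\overline{R}^{k-1} \to \o_\CC/r$. View the already-constructed tower $\overline{R}^0 \to \cdots \to \overline{R}^{k-1}$ as an $\E_n$-algebra in $\CC^{\Fil}$ with $\overline{R}^j$ placed in filtration $j$ (constant beyond $k-1$). Tracking the pushout squares through the associated graded reproduces precisely the first $k-1$ stages of the graded resolution of \Cref{lem:sqz-res} applied to $X$, so that $Y_k := \Sigma^{-1-n}\mathrm{D}_k^{c\E_n}(\Sigma^{n+1}X)$ is pinned down as the canonical ``next cell'' to be attached, together with its graded attaching map $s_k$. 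The underlying content of that graded attaching map is a class $Y_k \to \overline{R}^{k-1}$ in $\CC$, and pushing out along the induced $\overline{s}_k \colon \o_\CC\{Y_k\} \to \overline{R}^{k-1}$ would produce $\overline{R}^k$ as in the statement, provided the composition $Y_k \to \overline{R}^{k-1} \to \o_\CC/r$ admits a null-homotopy compatible with the augmentation. The obstruction to producing such a null-homotopy, read off from the connecting map of the cofiber sequence witnessing $\overline{R}^{k-1} \to \o_\CC/r$, defines
\[\theta_k \in [\Sigma^{-1}Y_k,\ \o_\CC/r] \,=\, [\Sigma^{-2-n}\mathrm{D}_k^{c\E_n}(\Sigma^{n+1}X),\ \o_\CC/r].\]
When $\theta_k = 0$ the null-homotopy exists, the pushout yields $\overline{R}^k$, and the comparison extends to a map $\overline{R}^k \to \o_\CC/r$.

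Convergence $\colim_k \overline{R}^k \simeq \o_\CC/r$ follows because at each step the fiber of the comparison map gains connectivity in the $\II$-adic filtration; combined with the compact generation from \Cref{rmk:force-pres}, this forces the colimit comparison to be an equivalence. The main obstacle, and the reason the bar-cobar formalism of \Cref{sec:barcobar} is needed, is the precise identification of the candidate attaching class $Y_k \to \overline{R}^{k-1}$ in the unfiltered category: transporting $s_k$ from the graded resolution of \Cref{lem:sqz-res} into an actual morphism of $\CC$, and verifying that the resulting obstruction to its nullity in $\o_\CC/r$ is exactly the class $\theta_k$ described above, is the core bookkeeping step and the one that consumes the divided-power identifications $\Sigma^{-1-n}\Barn(\o \oplus \Sigma X(1))_k \simeq \mathrm{D}_k^{c\E_n}(\Sigma^{n+1}X)$ established in that section.
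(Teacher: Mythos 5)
The high-level strategy is the right one (lift the graded resolution of \Cref{lem:sqz-res} to produce the tower of $\E_n$-algebras), but the filtered bookkeeping that you invoke does not hold, and as a result the obstruction class is not correctly identified.

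The specific gap is in the inductive step. You propose to view the tower $\overline{R}^0 \to \cdots \to \overline{R}^{k-1}$ as a filtered $\E_n$-algebra by placing $\overline{R}^j$ in filtration $j$, and you claim that its associated graded reproduces the graded resolution $R^\bullet$ of \Cref{lem:sqz-res}. This is false. For instance $\gr_1$ of your filtered object is $\cof(\o_\CC \to \overline{R}^1) = \cof(\o_\CC \to \o_\CC/\!\!/r)$, which is the augmentation ideal of a free $\E_n$-algebra, not $\Sigma X = R^{k-1}_1$; more generally $\gr_j$ does not vanish for $2 \le j \le k-1$. The paper's construction is not the tower of underlying algebras refiltered, but a genuinely new filtered $\E_n$-algebra $\wt{R}^{j}$ in $\CC^{\Fil}$ built by $\E_n$-pushouts \emph{in $\CC^{\Fil}$}; its associated graded is $R^j$ by design, and its filtration degrees $1,\dots,j$ are each identified with $\o_\CC/r$ (not with $\overline{R}^i$). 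That last identification, via the vanishing of $R^{j}$ in degrees $2,\dots,j$, is precisely what makes the obstruction land in $[\Sigma^{-1}Y_k,\ \o_\CC/r]$. Without it, the group you write down has no justification.

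Relatedly, your account of the obstruction itself conflates the data with the thing being constructed. You treat the class $Y_k \to \overline{R}^{k-1}$ ``in the unfiltered category'' as given and then ask for a nullhomotopy of its composite to $\o_\CC/r$. But that composite, if it is obstructed, gives a class in $[Y_k, \o_\CC/r]$, which is off by a suspension from the stated $[\Sigma^{-1}Y_k, \o_\CC/r]$. The actual issue is that the attaching map $s_k$ lives in the graded world, $s_k: \o_{\CC^{\Gr}}\{Y_k(k)\} \to R^{k-1}$, and the obstruction is to lifting $s_k$ along the $\tau$-reduction $\wt{R}^{k-1} \to \wt{R}^{k-1}/\tau = R^{k-1}$; this obstruction is the $\tau$-Bockstein $\delta_\tau \circ s_k$, and the extra $\Sigma^{-1}$ (and the shift in filtration degree) comes from $\delta_\tau$. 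Your final paragraph gestures at this as ``the core bookkeeping step,'' but it is not handled by the bar-cobar identifications of \Cref{sec:barcobar} — those produce \Cref{lem:sqz-res} and hence the correct cells $Y_k$ — it is the separate, and essential, deformation-theoretic step where the $\CC^{\Fil}$ construction does real work. Additionally, even granting the existence of a nullhomotopy making $\overline{R}^k$, your argument for convergence (``the fiber gains connectivity in the $\II$-adic filtration'') is asserted rather than derived; in the paper this is automatic from the identification $(\wt{R}^{k})_k \simeq \o_\CC/r$.
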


\begin{proof}
  In order to construct an $\E_n$-algebra structure on $\o_\CC/r$ we will construct an $\E_n$-algebra structure on the filtered object $\cof( X(1) \xrightarrow{\tau r} \o_{\CC^\Fil})$ whose associated graded is equipped with an equivalence with the square zero $\E_n$-algebra $\o_{\CC^\Gr} \oplus \Sigma X(1)$.
  The desired $\E_n$-algebra is then obtained by inverting the filtration parameter $\tau$.
  
  Let $R^k$ denote the graded $\E_n$-algebra from \Cref{lem:sqz-res}.
  We will inductively produce filtered $\E_n$-algebras $\wt{R}^k$ whose associated graded is $R^k$ and whose underlying $\E_n$-algebra is the desired $\overline{R}^k$.
  For our base case we let $\wt{R}^1$ be the $\E_n$-cofiber of the map $\tau r : X(1) \to \o_{\CC^\Fil}$.
  On associated graded this is a free algebra on $\Sigma X(1)$ and we pick an identification of it with $R^1$. Meanwhile, in degree $1$ we have $(\wt{R}^1)_1 \simeq \o_\CC/r$ as desired.

  Examining the pushout square from \Cref{lem:sqz-res}(2) we see that given a choice of $\wt{R}^{k-1}$
  and a lift  
  \begin{center}
    \begin{tikzcd}
      & \wt{R}^{k-1} \ar[d] \\      
      \o_{\CC^\Fil}\left\{ \Sigma^{-1-n} \mathrm{D}_k^{c\E_n} \left( \Sigma^{n+1} X(1) \right) \right\} \ar[r, "s_k"] \ar[ur, dashed, "\wt{s}_k"] &
      R^{k-1}
    \end{tikzcd}
  \end{center}
  we can construct an $\wt{R}^k$ as the pushout of $\wt{s}_k$ with the augmentation.
  Since we are mapping out of a free algebra and $R^{k-1} \cong \wt{R}^{k-1}/\tau$ the obstruction to constructing the lift $\wt{s}_k$ is the composite of $s_k$ with the $\tau$-Bockstein
  \[ \theta_k : \Sigma^{-1-n} \mathrm{D}_k^{c\E_n} \left( \Sigma^{n+1} X(1) \right) \xrightarrow{s_k} R^{k-1} \xrightarrow{\delta_{\tau}} \Sigma\wt{R}^{k-1}(1). \]

  Now we note that since $R^{k-1}$ vanishes in degrees $2,\dots,k-1$ by \Cref{lem:sqz-res}(1) we can identify $(\wt{R}^{k-1})_{k-1}$ with $\o_\CC/r$. Consequently, the obstruction $\theta_k$ lives in
  \[ [ \Sigma^{-2-n} \mathrm{D}_k^{c\E_n} \left( \Sigma^{n+1} X \right),\ \  \o_\CC/r ]. \]
\end{proof}

\begin{rmk} \label{rmk:uniqueness}
  The obstruction $\theta_k$ depends upon a choice of nullhomotopy of each of the previous obstructions. In particular, if $[\Sigma^{-1-n}\mathrm{D}_k^{c\E_n}(\Sigma^{n+1}X) ,\ \ \o_\CC/r ] = 0$, then there is at most one such nullhomotopy and therefore \Cref{prop:obs} produces at most one $\E_n$-algebra structure on the quotient.
  \tqed
\end{rmk}

As computing maps out of $\mathrm{D}_k^{c\E_n}(\Sigma^{n+1}X)$ can be unwieldy in general,
we end the section by noting that there is a simple resolution which lets us subdivide the obstruction $\theta_k$ into a collection of obstructions $\theta_{k,\alpha}$ whose sources are suspensions of $X^{\otimes k}$.

\begin{lem} \label{lem:Dn-fil}
  $\mathrm{D}_k^{c\E_n}(Y)$ has a resolution by finitely many copies of
  $\Sigma^{-c} Y^{\otimes k}$ where $0 \leq  c \leq (n-1)(k-1)$.
\end{lem}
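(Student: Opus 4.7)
The plan is to realize $\mathrm{D}_k^{c\E_n}(Y)$ as a $\Sigma_k$-equivariant cotensor and then cellularize the configuration space.  By the conventions, $\mathrm{D}_k^{c\E_n}(Y)$ is the limit of $Y^{\otimes k}$ over $\E_n(k)_{h\Sigma_k}$; since $\CC$ is presentably $\E_m$-monoidal and hence cotensored over pointed $\Sigma_k$-spaces, this can be rewritten as the $\Sigma_k$-equivariant mapping object $\Map_{\Sigma_k}\!\bigl(\E_n(k)_+,\, Y^{\otimes k}\bigr)$ in $\CC$, with $Y^{\otimes k}$ carrying its canonical $\Sigma_k$-action coming from the $\E_m$-monoidal structure.

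The geometric input I would invoke is the classical fact that the unordered configuration space $\mathrm{UConf}_k(\R^n) \simeq \E_n(k)/\Sigma_k$ has the homotopy type of a finite CW complex of dimension at most $(n-1)(k-1)$.  For $n = 2$ this is the Salvetti finite $K(B_k,1)$, and for $n \geq 3$ it follows from CW approximation applied to the cohomological dimension bound coming from iterated use of the Fadell--Neuwirth fibrations $\E_n(j) \to \E_n(j-1)$ with fibers $\bigvee_{j-1} S^{n-1}$ (the Fox--Neuwirth cells give an alternative finite decomposition).  Since $\Sigma_k$ acts freely on $\E_n(k)$, any such CW structure on the quotient pulls back to a finite free $\Sigma_k$-CW structure on $\E_n(k)$ whose cells are of the form $\Sigma_k \times D^c$ for $0 \leq c \leq (n-1)(k-1)$.

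Applying $\Map_{\Sigma_k}(-, Y^{\otimes k})$ to the associated finite skeletal filtration of $\E_n(k)_+$ then produces a finite tower for $\mathrm{D}_k^{c\E_n}(Y)$ whose successive cofibers are wedges of
\[
\Map_{\Sigma_k}\!\bigl((\Sigma_k)_{+} \wedge S^c,\, Y^{\otimes k}\bigr) \;\simeq\; \Map(S^c, Y^{\otimes k}) \;\simeq\; \Sigma^{-c} Y^{\otimes k},
\]
with one summand per free $c$-cell.  Splicing the associated cofiber sequences yields the claimed resolution.

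The only real subtlety I expect is book-keeping: one must verify that the $\Sigma_k$-equivariant cotensor used above interacts correctly with the abstract $\E_m$-monoidal structure on $\CC$ rather than merely with spectra, so that the successive cofibers really are identified with shifts of $Y^{\otimes k}$ in $\CC$.  For $m \geq 2$ this is formal from the presentable $\E_m$-monoidal hypothesis, and after that the content of the lemma is entirely a statement about the homotopy type of $\mathrm{UConf}_k(\R^n)$.
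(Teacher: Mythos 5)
Your proof is correct and is essentially the same as the paper's. Both identify $\mathrm{D}_k^{c\E_n}(Y)$ as the limit over $\E_n(k)_{h\Sigma_k}$ (which you rephrase as a $\Sigma_k$-equivariant cotensor), and both induce the finite filtration from a finite cell structure on the unordered configuration space of $k$ points in $\R^n$ in the dimension range $0,\dots,(n-1)(k-1)$, with the paper citing the Fox--Neuwirth cell structure just as you do. The only minor difference is that the paper sidesteps the ``subtlety'' you flag at the end by working directly with the diagram $\E_n(k)_{h\Sigma_k} \to \CC$ supplied by convention~(9) rather than passing through a genuinely $\Sigma_k$-equivariant object $Y^{\otimes k}$ in $\CC$, which avoids having to justify the compatibility of the equivariant cotensor with the abstract $\E_m$-monoidal structure.
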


\begin{proof}
  We can describe $\mathrm{D}_k^{c\E_n}(Y)$ as the limit over the diagram
  \[ F : \E_n(k)_{h\Sigma_k} \to \CC \]
  describing all ways of taking the $k^{\mathrm{th}}$ power of $Y$.
  The space $\E_n(k)_{h\Sigma_k}$ is equivalent to
  the space of unordered configurations of $k$ points in $\R^n$ and
  we can put a finite cellular filtration on this object whose cells lie in the range $0,\dots,(n-1)(k-1)$.\footnote{For an explicit presentation of this space as a finite simplicial set of dimension $(n-1)(k-1)$ one can take the quotient of the (free) $\Sigma_k$ action on the nerve of the poset of Fox--Neuwirth cells (see \cite{AyalaHepworth}).}
  On the limit of the diagram $F$ of shape $\E_n(k)_{h\Sigma_k}$ this induces a finite filtration whose associated graded is given by copies of $\Sigma^{-c} Y^{\otimes k}$ where $0 \leq c \leq (n-1)(k-1)$.
\end{proof}

\begin{cor} \label{cor:obs-degs}
  In the situation of \Cref{prop:obs} we can use 
  the filtration on $\mathrm{D}_k^{c\E_n}(-)$ from \Cref{lem:Dn-fil}
  to refine the obstructions $\theta_k$ to obstructions
  \[ \theta_{k,\alpha} \in [ \Sigma^{-2-n-c_\alpha}(\Sigma^{n+1}X)^{\otimes k},\ \ \o_\CC/r ] \]
  where $k \geq 2$ and $0 \leq c_\alpha \leq (n-1)(k-1)$.
\end{cor}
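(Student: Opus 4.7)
The plan is to combine the obstruction class $\theta_k$ produced by \Cref{prop:obs} with the finite filtration provided by \Cref{lem:Dn-fil}. Applying that lemma with $Y = \Sigma^{n+1}X$ yields a finite filtration
\[ 0 = F_0 \to F_1 \to \cdots \to F_N = \mathrm{D}_k^{c\E_n}(\Sigma^{n+1}X) \]
whose associated graded pieces $F_i/F_{i-1}$ are indexed by the cells $\alpha$ of $\E_n(k)_{h\Sigma_k}$ and each takes the form $\Sigma^{-c_\alpha}(\Sigma^{n+1}X)^{\otimes k}$ with $0 \leq c_\alpha \leq (n-1)(k-1)$.

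First I would desuspend by $2+n$ to put everything in the mapping group appearing in \Cref{prop:obs}, obtaining a compatible finite filtration of $\Sigma^{-2-n}\mathrm{D}_k^{c\E_n}(\Sigma^{n+1}X)$ with cofibers $\Sigma^{-2-n-c_\alpha}(\Sigma^{n+1}X)^{\otimes k}$. Then I would apply the functor $[-,\o_\CC/r]$ to produce the corresponding tower of mapping spectra; the obstruction to inductively lifting a partial nullhomotopy of $\theta_k$ from $F_{i-1}$ to $F_i$ is precisely a class
\[ \theta_{k,\alpha} \in [\Sigma^{-2-n-c_\alpha}(\Sigma^{n+1}X)^{\otimes k},\ \o_\CC/r]. \]
By construction, simultaneous vanishing of all the $\theta_{k,\alpha}$ (together with a coherent choice of nullhomotopies) produces a nullhomotopy of $\theta_k$, which is the hypothesis needed to run the inductive step of \Cref{prop:obs}. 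Thus these classes constitute the desired refinement of $\theta_k$.

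There is essentially no difficulty here beyond bookkeeping: the content of the corollary is already carried by \Cref{lem:Dn-fil}, and the step of converting a cellular filtration into a sequence of obstruction classes in mapping groups is a standard application of the long exact sequences attached to the cofiber sequences $F_{i-1} \to F_i \to F_i/F_{i-1}$. The only minor point worth checking is that the shift conventions align so that the exponents $-2-n-c_\alpha$ come out correctly, which follows directly from starting with $\theta_k$ in degree $-2-n$ and incurring an additional desuspension of $c_\alpha$ for each associated graded piece.
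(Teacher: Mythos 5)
Your proposal is correct and is the standard argument that the paper leaves implicit; \Cref{cor:obs-degs} is stated without a separate proof precisely because this bookkeeping is routine. You correctly identify that the finite cellular filtration of $\E_n(k)_{h\Sigma_k}$ from \Cref{lem:Dn-fil} induces a finite filtration on $\mathrm{D}_k^{c\E_n}(\Sigma^{n+1}X)$ with the stated associated graded pieces, and that testing $\theta_k$ against this filtration via the long exact sequences of the cofiber sequences $F_{i-1}\to F_i\to F_i/F_{i-1}$ yields the inductively defined classes $\theta_{k,\alpha}$. One small refinement worth noting: a given filtration stage $F_i/F_{i-1}$ (say, the one corresponding to the $c$-skeleton) is a finite \emph{product} of copies of $\Sigma^{-c}(\Sigma^{n+1}X)^{\otimes k}$ indexed by the $c$-cells, not a single copy; but since $[-,\o_\CC/r]$ sends finite products to products, this decomposes the obstruction at that stage into the individual $\theta_{k,\alpha}$ exactly as you state, or one can simply refine the cellular filtration so that each stage attaches a single cell. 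Either way the degree count $-2-n-c_\alpha$ comes out as you compute.
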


\begin{rmk} \label{rmk:E1-explicit}
  In the case $n=1$ the space $\E_1(k)_{h\Sigma_k}$ is a single point and therefore we have obstructions
  \[ \theta_k \in [ \Sigma^{-3}(\Sigma^{2}X)^{\otimes k},\ \ \o_\CC/r ]. \]
  \tqed
\end{rmk}

\section{$\E_n$-algebra Moore spectra} \label{sec:even-Moore}

The key idea in applying \Cref{prop:obs} to construct multiplicative structures on Moore spectra is that while the obstruction groups do not vanish in $\Sp$, they do vanish in certain deformations of $\Sp$. In this section we apply this idea using the category of $\F_2$-synthetic spectra as our deformation.

\begin{thm} \label{thm:mod8}
  $\Ss/8$ admits the structure of an $\E_1$-algebra.
\end{thm}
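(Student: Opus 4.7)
The plan is to apply the obstruction theory of \Cref{prop:obs} in the deformation $\CC = \Syn_{\F_2}$ of $\F_2$-synthetic spectra. Because $\Syn_{\F_2}$ is stable and presentably symmetric monoidal, the hypothesis $m \geq 2$ of \Cref{prop:obs} is satisfied automatically; the point of working in this deformation is that the Adams vanishing line will force every obstruction group to vanish for bidegree reasons.

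Concretely, I would lift $8 : \Ss \to \Ss$ to the synthetic map $v := \nu 8 : \Sigma^{0,3}\Ss \to \Ss$, whose Adams filtration is exactly $3$ because $8$ is detected by $h_0^3 \in \Ext^{3,3}_{\mathcal{A}}(\F_2, \F_2)$. Setting $\II := \Sigma^{0,3}\Ss$, the cofiber $\Ss/\nu 8 \in \Syn_{\F_2}$ satisfies $(\Ss/\nu 8)[\tau^{-1}] \simeq \Ss/8$, so any $\E_1$-algebra structure on $\Ss/\nu 8$ descends to one on $\Ss/8 \in \Sp$ after inverting $\tau$.

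Applying \Cref{prop:obs} with $n = 1$ and unpacking \Cref{rmk:E1-explicit} identifies the obstructions as
\[ \theta_k \in \pi_{2k-3,\, 3k}(\Ss/\nu 8), \qquad k \geq 2. \]
Using the long exact sequence associated to the cofiber of $\nu 8$, in order to see each $\theta_k$ vanishes it suffices to show that $\pi_{2k-3,\,3k}(\Ss) = 0$ and that multiplication by $8$ is injective on $\pi_{2k-4,\,3k-3}(\Ss)$. For $k \geq 3$, the bidegrees $(2k-3, 3k)$ and $(2k-4, 3k-3)$ have slopes $s/(t-s) \geq 3/2$, well above the slope-$\tfrac{1}{2}$ Adams vanishing line for $\Ext^{*,*}_{\mathcal{A}}(\F_2, \F_2)$; this upgrades to vanishing of the two synthetic homotopy groups themselves via the $\tau$-Bockstein.

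The main obstacle is the edge case $k = 2$, which I would verify by hand. The group $\pi_{1,6}(\Ss_{\F_2})$ vanishes because the only candidate $\Ext^{s,s+1}_{\mathcal{A}}$ classes for $s \geq 2$ are products $h_0^{s-1} h_1$, and these are all zero in view of the classical relation $h_0 h_1 = 0$ in $\Ext^{2,3}_{\mathcal{A}}(\F_2,\F_2)$. Meanwhile $\cdot 8$ is injective on $\pi_{0,3}(\Ss_{\F_2}) \cong 8\Z_2 \subset \Z_2$ because $\Z_2$ is torsion-free. With every $\theta_k$ vanishing, \Cref{prop:obs} produces an $\E_1$-algebra structure on $\Ss/\nu 8$ in $\Syn_{\F_2}$, and inverting $\tau$ transports it to the desired $\E_1$-algebra structure on $\Ss/8$.
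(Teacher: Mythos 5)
Your overall approach is exactly the paper's: lift $8$ to the filtration-three synthetic class $\wt{2}^3 : \Ss^{0,3} \to \nu\Ss$, run the $\E_1$-obstruction theory of \Cref{prop:obs} and \Cref{rmk:E1-explicit} in $\Syn_{\F_2}$, kill the obstructions with a vanishing line, and invert $\tau$. However, your obstruction bidegree is computed in a convention that does not match the paper's. In Convention~(10) ($w$ is topological degree, $s$ is Adams filtration) the categorical suspension of $\Syn_{\F_2}$ is $\Sigma = \Sigma^{1,-1}$, not $\Sigma^{1,0}$: in the cofiber sequence $\Ss^{0,1} \xrightarrow{\wt 2} \Ss^{0,0} \to \nu(\Ss/2) \to \Sigma\Ss^{0,1}$ the top cell is $\nu\Ss^1 = \Ss^{1,0}$, forcing $\Sigma\Ss^{0,1} = \Ss^{1,0}$. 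Hence $\Sigma^2\Ss^{0,3} = \Ss^{2,1}$, so $(\Sigma^2\Ss^{0,3})^{\otimes k} = \Ss^{2k,k}$ and $\theta_k \in \pi_{2k-3,\,k+3}(\nu\Ss/\wt{2}^3)$, not $\pi_{2k-3,\,3k}$. Both bidegrees happen to sit above the vanishing line, so your conclusion is unaffected here, but the margin with the correct indexing is the constant $\tfrac32$ rather than the linearly growing margin your $3k$ suggests; getting this right is precisely what makes the $\tfrac32(n+1)$ bound appear in \Cref{thm:mod2n}.

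A second, more cosmetic difference: the paper kills every $\theta_k$ at once using a vanishing line for the quotient itself, $\pi_{w,s}(\nu\Ss/\wt{2}^3) = 0$ for $s > \tfrac12 w + 3$ (\Cref{lem:small-van}, citing \cite[Prop.~15.8]{boundaries}), so no $k=2$ edge case arises. Your route — decomposing via the long exact sequence of the cofiber of $\wt{2}^3$, invoking the Adams vanishing line for $\nu\Ss$, and checking injectivity of $\cdot\wt{2}^3$ on $\pi_{0,*}(\nu\Ss) \cong \Z_2$ — is a valid and somewhat more self-contained way to obtain the same vanishing; the apparent $k=2$ edge case (where the boundary lands in the $h_0$-tower $\pi_{0,3}(\nu\Ss)$) is an artifact of that decomposition, not of the underlying geometry, and your hand check of it is correct.
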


\begin{proof}
  In order to prove that $\Ss/8$ admits an $\E_1$-algebra structure we will show that in the $\F_2$-synthetic category $\nu\Ss/\wt{2}^3$ admits an $\E_1$-algebra structure and then invert $\tau$.

  Applying the obstruction theory from \Cref{prop:obs}
  with the simplification from \Cref{rmk:E1-explicit}
  to the map $\wt{2}^{3} : \Ss^{0,3} \to \nu\Ss$ we a obtain sequence of inductively defined obstructions
  \[ \theta_{k} \in [ \Sigma^{-3, 3}( \Ss^{2,1} )^{\otimes k}    , \nu\Ss/\wt{2}^{3} ] \qquad \text{ for } k \geq 2\]
  whose vanishing implies $\nu\Ss/\wt{2}^3$ admits an $\E_1$-algebra structure.
  On the other hand, the vanishing line from \cite[Prop. 15.8]{boundaries}\footnote{For the interested reader we note that the part of this proposition we use is essentially equivalent to Adams' vanishing line in the cohomology of the Steenrod algebra from \cite{AdamsPer}.} \footnote{We warn the reader that our convention for the indexing of bigraded spheres (see convention (6)) differs from that used in loc. cit.} says that
  \[ [\Ss^{w,s}, \nu\Ss/\wt{2}^3] = 0\]
  when $s > \frac{1}{2} w + 3$.
  In particular, this implies that $\theta_k$ is zero because it lies in a zero group!
\end{proof}

\begin{sseqdata}[ name = ASS, xscale=0.8, yscale=0.8, x range = {0}{12}, y range = {0}{9}, x tick step = 2, y tick step = 2, axes type = frame, class labels = {left}, classes = fill, grid = crossword, Adams grading, lax degree]

  \class(0,0)
  \class(0,1) \structline
  \class(0,2) \structline
  \class(1,1) \structline(0,0)
  \class(2,2) \structline
  \class(3,1) \structline(0,0)
  \class(3,2) \structline \structline(0,1)
  \class(3,3) \structline \structline(0,2) \structline(2,2)

  \class(2,3)
  \class(3,4) \structline
  \class(4,3) 
  \class(4,4) \structline
  \class(4,5) \structline \structline(3,4)

  \class(6,2) \structline(3,1)
  \class(7,1)
  \class(7,2) \structline
  \class(7,3) \structline
  \class(8,2) \structline(7,1)
  \class(9,3) \structline \structline(6,2)
  
  \class(8,3)
  \class(9,4) \structline
  
  \class(10,7)
  \class(11,8) \structline
  \class(12,7) 
  \class(12,8) \structline
  \class(12,9) \structline \structline(11,8)
  
  \class(7,4) \structline(4,3)
  
  \class(9,4)
  \class(10,5) \structline \structline(7,4) 
  
  \class(8,4)
  \class(8,5) \structline
  \class(8,6) \structline
  
  \class(9,5) \structline(8,4)
  \class(10,6) \structline
  \class(11,5) \structline(8,4)
  \class(11,6) \structline \structline(8,5)
  \class(11,7) \structline \structline(10,6) \structline(8,6)

  \class(9,5)
  \class(10,6) \structline

  \class[red](-2,2) 
  \class[red](14,10) \structline[green!50!black](-2,2)

  \class[red, "\theta_2" below](1,5)
  \class[red, "\theta_3" below](3,6)
  \class[red, "\theta_4" below](5,7)
  \class[red, "\theta_5" below](7,8)
  \class[red, "\theta_6" below](9,9)
  \class[red, "\theta_7" below](11,10)
  \class[red, "\theta_8" below](13,11)

\end{sseqdata}

\begin{figure}
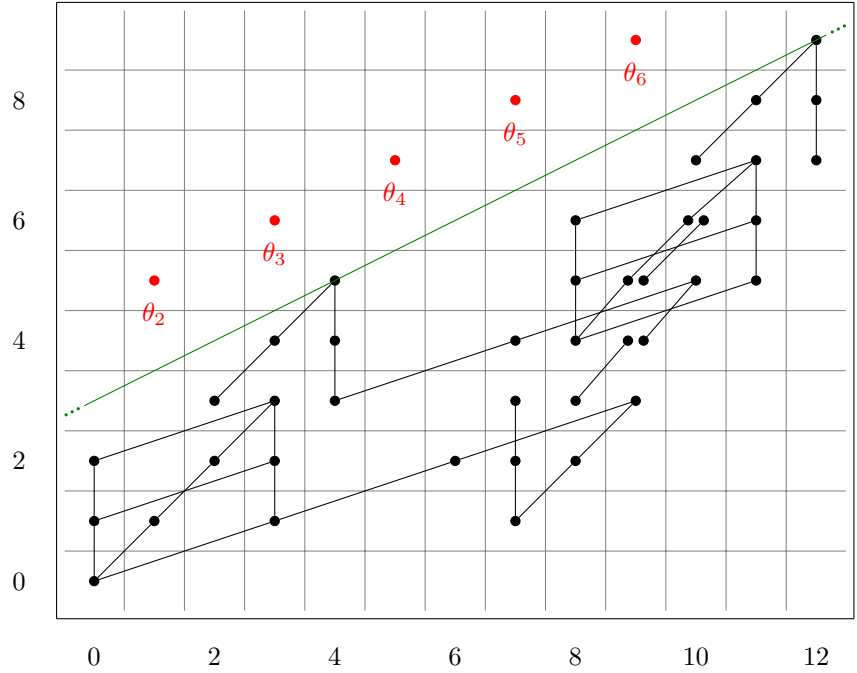

  \centering
  The $\F_2$-synthetic homotopy groups of $\nu\Ss/\wt{2}^3 $.\vspace{0.3cm}
  \printpage[ name = ASS, page = 2 ]  
  \caption{\footnotesize A picture of the $\F_2$-synthetic homotopy groups of $\nu\Ss/\wt{2}^3 $.    
    Black dots indicate non-$\tau$-torsion classes and we suppress all $\tau$-multiples.
    The green line is the vanishing line above which the bigraded homotopy groups are zero.
    The red classes are the obstructions $\theta_k$.}
  \label{fig:syn}
\end{figure}

Building on the proof of \Cref{thm:mod8} we prove the more complicated \Cref{thm:mod2n} in an almost identical way.

\begin{thm} \label{thm:mod2n}
  $\Ss/2^{q}$ admits the structure of an $\E_n$-algebra for every $q \geq \frac{3}{2}(n+1)$.
\end{thm}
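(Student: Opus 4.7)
The plan is to adapt the strategy of \Cref{thm:mod8} to arbitrary $n$. We work in the $\F_2$-synthetic category, construct an $\E_n$-algebra structure on $\nu\Ss/\wt{2}^q$ there, and then invert $\tau$ to obtain the desired $\E_n$-algebra structure on $\Ss/2^q$.

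Applying the obstruction theory of \Cref{prop:obs} via the refinement \Cref{cor:obs-degs} (needed since $n \geq 2$ is now permitted) to the map $\wt{2}^{q} : \Ss^{0,q} \to \nu\Ss$, we obtain a family of obstructions
\[
  \theta_{k,\alpha} \in \pi_{\ast,\ast}\bigl(\nu\Ss/\wt{2}^q\bigr), \qquad k \geq 2, \ 0 \leq c_\alpha \leq (n-1)(k-1),
\]
whose joint vanishing yields an $\E_n$-algebra structure on $\nu\Ss/\wt{2}^q$. Inverting $\tau$ then descends this structure to $\Ss/2^q$ in $\Sp$.

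As in \Cref{thm:mod8}, the vanishing of every $\theta_{k,\alpha}$ will be read off from \cite[Prop.~15.8]{boundaries}, which supplies the bigraded vanishing line $\pi_{w,s}\bigl(\nu\Ss/\wt{2}^q\bigr) = 0$ whenever $s > \tfrac{1}{2}w + q$. The heart of the argument is verifying that, for each admissible $(k, \alpha)$, the source $\Sigma^{-2-n-c_\alpha}(\Sigma^{n+1}\Ss^{0,q})^{\otimes k}$ has synthetic bigrading $(w,s)$ lying strictly above this line. The worst case arises at the smallest allowed values of $c_\alpha$; there the inequality reduces to a linear constraint in $(k, n, q)$, and the hypothesis $q \geq \tfrac{3}{2}(n+1)$ provides a convenient (if not sharp at $p=2$, as noted in the introduction) sufficient condition ensuring it holds uniformly in $k \geq 2$.

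The main obstacle is purely bookkeeping: tracking the synthetic bigrading of the obstruction source through the $k$-fold tensor product and the various suspension shifts of \Cref{cor:obs-degs}, and then confirming the resulting linear inequality. No new conceptual ingredient beyond those already used in \Cref{thm:mod8} is required.
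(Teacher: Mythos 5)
Your plan is exactly the paper's approach: work in $\F_2$-synthetic spectra, apply \Cref{prop:obs} together with \Cref{cor:obs-degs} to $\wt{2}^q$, compare the bidegree of each obstruction to a slope-$\tfrac{1}{2}$ vanishing line, and invert $\tau$. The bookkeeping you sketch (worst case at $c_\alpha$ small, a linear inequality in $k,n,q$ satisfied uniformly for $q \geq \tfrac{3}{2}(n+1)$) is also exactly right.

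The one place you overreach is attributing the vanishing line for $\nu\Ss/\wt{2}^q$ directly to \cite[Prop.~15.8]{boundaries}. That proposition does not cover an arbitrary power of $\wt{2}$; the paper establishes the needed statement
\[
  \pi_{w,s}\bigl(\nu\Ss/\wt{2}^q\bigr)=0 \quad\text{for } s > \tfrac{1}{2}w+q
\]
as a separate lemma (\Cref{lem:small-van}), by induction on $q$ using the cofiber sequence $\Sigma^{0,1}\nu\Ss/\wt{2}^{q-1} \to \nu\Ss/\wt{2}^q \to \nu\Ss/\wt{2}$ together with the vanishing-line machinery of \cite[\S 11]{boundaries}. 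This is a short argument, but it is a genuine step rather than a direct citation, and omitting it leaves the key input unjustified. You would also want to actually record the arithmetic: writing $w = (n+1)k - 2 - n - c_\alpha$ and $s = (q-1-n)k + 2 + n + c_\alpha$, the inequality $s > \tfrac{1}{2}w + q$ rearranges to $2q(k-1) > 3\bigl((n+1)(k-1) - 1 - c_\alpha\bigr)$, which holds for all $k\ge 2$ and $c_\alpha\ge 0$ precisely because $q \geq \tfrac{3}{2}(n+1)$.
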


\begin{proof}
  This time we will show that in the $\F_2$-synthetic category
  $\Ss/\wt{2}^{q}$ admits an $\E_n$-algebra structure.
  Applying the obstruction theory from \Cref{prop:obs} 
  to the map $\wt{2}^{q}$ we a obtain sequence of inductively defined obstructions
  \[ \theta_{k, \alpha} \in
    [ \Sigma^{-2-n-c_\alpha, 2+n+c_\alpha}( \Ss^{1+n,q-1-n} )^{\otimes k},\  \Ss/\wt{2}^{q} ] \]
  where $k \geq 2$ and $0 \leq c_\alpha \leq (n-1)(k-1)$ 
  whose vanishing implies $\Ss/\wt{2}^{q}$ admits an $\E_n$-algebra structure.
  Comparing the bidegree of $\theta_{k,\alpha}$ to the vanishing region from \Cref{lem:small-van} below 
  we again find that $\theta_{k,\alpha}$ lies in a zero group since
  \[ \left( (q-1-n)k + 2 + n + c_\alpha \right) > \frac{1}{2}\left( (n+1)k - 2 - n - c_\alpha \right) + q. \]
\end{proof}

\begin{lem} \label{lem:small-van} 
  If $ s > \frac{1}{2}w + q$, then $\pi_{w,s}(\nu\Ss/\wt{2}^q) = 0$.
\end{lem}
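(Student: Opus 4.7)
The plan is to induct on $q$. The base case $q = 1$ asks for $\pi_{w,s}(\nu\Ss/\wt{2}) = 0$ when $s > \frac{1}{2}w + 1$; this is exactly Adams' classical vanishing line for the $\F_2$-Adams spectral sequence of $\Ss/2$, translated through the identification of $\F_2$-synthetic homotopy with $\Ext$ over the dual Steenrod algebra (the vanishing line quoted from \cite[Prop.~15.8]{boundaries} in the proof of \Cref{thm:mod8} gives a convenient reference and can itself be run down to $q = 1$ by the same cofiber sequence used below).

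For the inductive step, I would factor $\wt{2}^q : \Ss^{0,q} \to \nu\Ss$ as
\[ \Ss^{0,q} \xrightarrow{\wt{2}} \Ss^{0,q-1} \xrightarrow{\wt{2}^{q-1}} \nu\Ss \]
and invoke the octahedral axiom to produce a cofiber sequence
\[ \Sigma^{0, q-1}\nu\Ss/\wt{2} \longrightarrow \nu\Ss/\wt{2}^q \longrightarrow \nu\Ss/\wt{2}^{q-1}. \]
In the resulting long exact sequence, for any $(w,s)$ satisfying $s > \frac{1}{2}w + q$ both outer groups vanish: the left group, $\pi_{w, s-(q-1)}(\nu\Ss/\wt{2})$, vanishes by the base case because $s - (q-1) > \frac{1}{2}w + 1$, and the right group, $\pi_{w,s}(\nu\Ss/\wt{2}^{q-1})$, vanishes by the inductive hypothesis because $s > \frac{1}{2}w + q > \frac{1}{2}w + (q-1)$. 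This forces $\pi_{w,s}(\nu\Ss/\wt{2}^q) = 0$ and closes the induction.

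I do not expect any real difficulty here: all the substance is in the base case, which is the classical Adams vanishing statement, while the inductive step is routine bookkeeping with the octahedral cofiber sequence. The only point requiring care is ensuring the bidegree shift $\Sigma^{0,q-1}$ on the left of the cofiber sequence, which is dictated by the fact that $\wt{2} \in \pi_{0,1}(\nu\Ss)$; once this is in place the numerical inequalities line up exactly at the threshold $s = \frac{1}{2}w + q$.
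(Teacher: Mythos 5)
Your proof is correct and takes essentially the same approach as the paper: induction on $q$ with Adams' vanishing line as the base case and an octahedral cofiber sequence for the inductive step. The only (immaterial) difference is that the paper uses the other octahedral factorization, giving $\Sigma^{0,1}\nu\Ss/\wt{2}^{q-1} \to \nu\Ss/\wt{2}^q \to \nu\Ss/\wt{2}$, whereas you place $\nu\Ss/\wt{2}$ on the left; both choices make the inequalities close cleanly.
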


\begin{proof}
  In this lemma we invoke the machinery of vanishing lines from \cite[\S11]{boundaries} and
  our proof is patterned on the proof of \cite[Prop. 15.8]{boundaries}.
  
  We proceed by induction with $q=1$ as our base case (which is already covered by loc. cit.).
  For the inductive step we apply \cite[Lem. 11.11]{boundaries} to the cofiber sequence
  \[ \Sigma^{0,1}\nu\Ss/\wt{2}^{q-1} \to \nu\Ss/\wt{2}^q \to \nu\Ss/\wt{2}.\]
\end{proof}

\section{Deforming $\CC$} \label{sec:Bockstein}

In this section we use the machinery of \cite{PP21} to construct a deformation of $\CC$ in which we can run the arguments of the previous section in order to prove \Cref{thm:En-quo}.

\begin{rec} \label{rec:def}
  Given an epimorphism class $Q$ on a stable category $\mathcal{E}$ such that $\mathcal{E}$ has enough $Q$-injectives
  Patchkoria and Pstr\k{a}gowski define a prestable category
  $\mathcal{D}_{\geq 0}^\omega(\mathcal{E};\,Q)$
  with associated stable category $\mathcal{D}^\omega(\mathcal{E};\,Q)$
  which fits into a diagram
  \begin{center}
    \begin{tikzcd}
      & \mathcal{D}_{\geq 0}^\omega(\mathcal{E};\,Q) \ar[dr, "(-)^{\tau=1}"] & \\
      \mathcal{E} \ar[ur, "\nu"] \ar[rr, "\Id"] & & \mathcal{E}
    \end{tikzcd}
  \end{center}  
  such that
  \begin{enumerate}
  \item $\mathcal{D}_{\geq0}^\omega(\mathcal{E};\,Q)$ has finite limits and colimits.    
  \item $\nu$ is fully faithful.
  \item $\mathcal{D}_{\geq0}^\omega(\mathcal{E};\,Q)$ is generated under finite colimits by the image of $\nu$.
  \item $\nu$ preserves those cofiber sequences $ a \to b \to c $ for which $b \to c$ is $Q$-epi.
  \item $\mathcal{D}_{\geq 0}^\omega(\mathcal{E};\,Q)$ is equipped with an automorphism $[1]$ and an equivalence
    \[ \nu(-)[1] \simeq \nu(\Sigma -). \]
  \item $(-)^{\tau=1}$ is the localization of $\mathcal{D}^\omega(\mathcal{E};\,Q)$ at (integer suspensions of) the assembly maps
    \[ \tau_X : \Sigma \nu X [-1] \to \nu X. \]
  \item If $\II$ is $Q$-injective and $X \in \mathcal{E}$, then $[\Sigma^{-s}\nu X,\ \nu \II] = 0$ for $s > 0$.
  \end{enumerate}
  For these claims see \cite[\S5]{PP21}, specifically 5.32, 5.34, 5.37, 5.47 and 5.60.
  \tqed  
\end{rec}

For the proof of our main theorem we will also need an $\E_m$-monoidal structure on our deformation of $\CC$. Although \cite{PP21} only considers monoidal structures in the case of $\Sp$, as it turns out the general case is no more difficult and our treatment follows \cite[\S 5.5]{PP21} closely.

\begin{dfn}
  We say that an epimorphism class $Q$
  on a stably $\E_m$-monoidal category $\mathcal{E}$ (with $m \geq 2$) is \emph{$\otimes$-compatible}
  if for every $Q$-epi map $X \to Y$ and $Z \in \mathcal{E}$ the map
  $X \otimes Z \to Y \otimes Z$ is $Q$-epi as well.\footnote{Note that because $m \geq 2$ it doesn't matter whether $Z$ is on the left or right in this definition. The potential discrepancy between left $\otimes$-compatible and right $\otimes$-compatible in the $\E_1$-monoidal case is the root of our restriction to $m \geq 2$.}
  \tqed
\end{dfn}

\begin{prop}
  If $Q$ is a $\otimes$-compatible epimorphism class on
  a stably $\E_m$-monoidal category $\mathcal{E}$ with $m \geq 2$,
  then $\mathcal{D}^\omega(\mathcal{E}; Q)$ admits an exact $\E_m$-monoidal structure,
  compatible with the prestable structure, such that $\nu$ and $(-)^{\tau=1}$ are $\E_m$-monoidal.
\end{prop}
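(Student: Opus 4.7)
The plan is to mirror the construction of \cite[\S 5.5]{PP21} essentially verbatim, replacing $\Sp$ with $\mathcal{E}$ and checking that the only property of $\otimes_\Sp$ used is $\otimes$-compatibility of the epimorphism class. The construction proceeds in four steps.

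First, I would build the $\E_m$-monoidal structure on the prestable category $\mathcal{D}_{\geq 0}^\omega(\mathcal{E}; Q)$. By \Cref{rec:def}(2), (3) and (4), $\mathcal{D}_{\geq 0}^\omega(\mathcal{E}; Q)$ is the universal prestable recipient of a functor from $\mathcal{E}$ which sends $Q$-epi cofiber sequences to cofiber sequences (this is the content of the results of \cite{PP21} cited there). I would construct the tensor product on $\mathcal{D}_{\geq 0}^\omega(\mathcal{E}; Q)$ by Day convolution / left Kan extension of $\nu \circ \otimes_\mathcal{E}$ along $\nu^{\times k}$, using that $\mathcal{D}_{\geq 0}^\omega(\mathcal{E}; Q)$ is generated under finite colimits by the image of $\nu$.

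The key technical point is that this Day convolution descends through the localization defining $\mathcal{D}_{\geq 0}^\omega(\mathcal{E}; Q)$, equivalently that the class of $Q$-epi cofiber sequences is stable under tensoring on either side by any object of $\mathcal{E}$. This is precisely the hypothesis that $Q$ is $\otimes$-compatible, and the condition $m \geq 2$ ensures that we may choose to tensor on whichever side is convenient without loss of generality. I expect this to be the main obstacle in the sense that it is where all the real content of $\otimes$-compatibility gets used; the rest is formal manipulation with universal properties of stabilizations and localizations.

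Next I would stabilize: the resulting $\E_m$-monoidal structure on $\mathcal{D}_{\geq 0}^\omega(\mathcal{E}; Q)$ is separately right-exact in each variable by construction, so passing to $\mathcal{D}^\omega(\mathcal{E}; Q) := \Sp(\mathcal{D}_{\geq 0}^\omega(\mathcal{E}; Q))$ yields an $\E_m$-monoidal structure which is exact in each variable, and $\nu : \mathcal{E} \to \mathcal{D}^\omega(\mathcal{E}; Q)$ is $\E_m$-monoidal by construction.

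Finally, to see that $(-)^{\tau=1}$ is $\E_m$-monoidal, I would note that it is a localization at the class of assembly maps $\{\tau_X : \Sigma \nu X[-1] \to \nu X\}_{X \in \mathcal{E}}$ and their suspensions by \Cref{rec:def}(6). Since $\tau$ is a natural transformation of $\E_m$-monoidal functors $\Sigma \nu(-)[-1] \Rightarrow \nu(-)$ (both extend from $\mathcal{E}$ via $\nu$ and the fact that $[1]$ commutes with $\nu\Sigma$ by \Cref{rec:def}(5)), the $\tau_X$ are closed under tensoring with any object of the form $\nu Y$, hence with any object of $\mathcal{D}^\omega(\mathcal{E}; Q)$ by generation. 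Thus the $\tau$-equivalences form a $\otimes$-ideal, so the localization inherits an $\E_m$-monoidal structure and the localization functor $(-)^{\tau=1}$ is canonically $\E_m$-monoidal.
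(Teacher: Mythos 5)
Your proposal takes essentially the same route as the paper, which itself delegates the construction of the $\E_m$-monoidal structure, its compatibility with the prestable structure, and the monoidality of $\nu$ to \cite[Prop. 5.69]{PP21}; your sketch fills in a plausible reconstruction of what that reference does (the Day convolution / Kan extension story, with $\otimes$-compatibility ensuring descent through the relevant localization, and $m\geq 2$ resolving the handedness issue). One small imprecision worth flagging in your treatment of $(-)^{\tau=1}$: the functor $\Sigma\nu(-)[-1]$ is not itself $\E_m$-monoidal (it is a shift twist of $\nu$, i.e.\ $\Sigma\o[-1]\otimes\nu(-)$), so it is not quite right to call $\tau$ a natural transformation of $\E_m$-monoidal functors. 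The cleaner formulation, which is the one the paper uses, is to observe that fully faithfulness of $\nu$ identifies $\tau_X$ with $\tau_{\o}\otimes\nu X$; this immediately exhibits the $\tau_X$ as a $\otimes$-ideal (generated by a single map out of a shift of the unit) and hence makes the localization $(-)^{\tau=1}$ manifestly $\E_m$-monoidal, arriving at the same conclusion you do by a slightly more structured route.
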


\begin{proof}
  The construction of the $\E_m$-monoidal structure, compatibility with the prestable structure
  and the $\E_m$-monoidality of $\nu$ are proved in the same way as in \cite[Prop. 5.69]{PP21}.
  Using the fully-faithfulness of $\nu$ we can identify $\tau_X$ with $\tau_\o \otimes \nu X$
  (and consequently we drop the subscript from $\tau_\o$ going forward).
  The $\E_m$-monoidality of $(-)^{\tau=1}$ now follows from describing this localization as inverting the map $\tau$ in a monoidal way.
\end{proof}

We are now ready to introduce the specific deformation of interest to us.

\begin{dfn}
  Let $\mathcal{Q}$ denote the epimorphism class of maps $X \to Y \in \CC^\omega$ which are split epi upon tensoring with $\o_\CC/v$.\footnote{That these maps form an epimorphism class follows from \cite[Examples 3.4 and 3.6]{PP21}.}
  \tqed
\end{dfn}

\begin{lem} \label{lem:enough-inj}
  The epimorphism class $\mathcal{Q}$ enjoys the following properties:
  \begin{enumerate}
  \item Every object of the form $\o_\CC/v \otimes X$ is $\mathcal{Q}$-injective.
  \item The map $X \to \o_\CC/v \otimes X$ is $\mathcal{Q}$-mono.
  \item $\CC^\omega$ has enough $\mathcal{Q}$-injectives.
  \item $\mathcal{Q}$ is $\otimes$-compatible.
  \end{enumerate}
\end{lem}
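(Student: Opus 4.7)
The plan is to verify the four claims in turn, with the right unital multiplication $m : \o_\CC/v \otimes \o_\CC/v \to \o_\CC/v$ as the key input and the braiding, available since $m \geq 2$, allowing me to interchange the two tensor factors freely. Items (4) and (2) are short. For (4), given a $\mathcal{Q}$-epi $\phi : X \to Y$ and any $Z \in \CC^\omega$, the braided symmetry gives an equivalence $(\phi \otimes Z) \otimes \o_\CC/v \simeq (\phi \otimes \o_\CC/v) \otimes Z$, and the right-hand side is split epi because $\phi \otimes \o_\CC/v$ is split epi by hypothesis and tensoring with $Z$ preserves split epis. For (2), the map $X \to \o_\CC/v \otimes X$ is $\eta \otimes X$, and tensoring on the left with $\o_\CC/v$ produces a map with retraction $m \otimes X$ by the right unital axiom.

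The heart of the proof is (1). Using the right unital structure, $\o_\CC/v \otimes X$ inherits a weak right $\o_\CC/v$-action, namely the composite of the braiding $\o_\CC/v \otimes X \otimes \o_\CC/v \to \o_\CC/v \otimes \o_\CC/v \otimes X$ with $m \otimes X$; any map $\psi : A \to \o_\CC/v \otimes X$ thus spreads to a map $\psi^{\flat} : A \otimes \o_\CC/v \to \o_\CC/v \otimes X$ (obtained by tensoring $\psi$ with $\o_\CC/v$ on the right and postcomposing with the right action) satisfying $\psi^\flat \circ (A \otimes \eta) = \psi$. Given a $\mathcal{Q}$-epi $\phi : A \to B$ with a section $\sigma$ of $\phi \otimes \o_\CC/v$, the plan is to produce a map $\Psi : B \otimes \o_\CC/v \to \o_\CC/v \otimes X$ satisfying $\Psi \circ (\phi \otimes \o_\CC/v) = \psi^\flat$, and then set $\tilde\psi := \Psi \circ (B \otimes \eta)$. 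Naturality of $\eta$ will then yield $\tilde\psi \circ \phi = \Psi \circ (\phi \otimes \o_\CC/v) \circ (A \otimes \eta) = \psi^\flat \circ (A \otimes \eta) = \psi$, so $\tilde\psi$ is the required extension of $\psi$ along $\phi$.

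Item (3) then follows by combining (1) and (2): for $X \in \CC^\omega$ the map $X \to \o_\CC/v \otimes X$ is a $\mathcal{Q}$-mono into a $\mathcal{Q}$-injective, and the target lies in $\CC^\omega$ since $\o_\CC/v$ is a finite iterated (co)fiber of the compact generators $\{\II^{\otimes k}\}$, so tensoring with $\o_\CC/v$ preserves compactness. The main technical obstacle is the production of $\Psi$ in item (1): the naive candidate $\psi^\flat \circ \sigma$ does not a priori satisfy $\Psi \circ (\phi \otimes \o_\CC/v) = \psi^\flat$ because $m$ is only right unital and not associative, so establishing existence of such a $\Psi$ requires exploiting the weak-module compatibility of $\psi^\flat$ with the right $\o_\CC/v$-action together with the section property of $\sigma$, via a careful diagram chase in which the assumption $m \geq 2$ (used to define the right action symmetrically) plays an essential role.
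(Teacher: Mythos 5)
The paper's own proof is a citation of Lemmas 5.67 and 5.68 of PP21, so your task was effectively to reconstruct those arguments from scratch. Your treatments of (2) and (4) are fine: the right unital axiom supplies a retraction of $\o_\CC/v \otimes \eta \otimes X$ for (2), and the braiding (available because $m \geq 2$) swaps $Z$ past $\o_\CC/v$ for (4). Item (3) from (1) and (2) is also correct.

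The gap is in (1), and the ``technical obstacle'' you flag at the end is not a technicality but a sign that you are proving the wrong statement. You posit a $\mathcal{Q}$-epi $\phi : A \to B$, so $\phi \otimes \o_\CC/v$ has a \emph{section} $\sigma$, and then try to show that every $\psi : A \to \o_\CC/v \otimes X$ extends to $\tilde\psi : B \to \o_\CC/v \otimes X$ with $\tilde\psi \circ \phi = \psi$. That is, you are aiming to show $\phi^* : [B, I] \to [A, I]$ is \emph{surjective} when $\phi$ is epi --- which is neither what $\mathcal{Q}$-injectivity asserts nor a true statement: take $\phi$ the fold map $I \oplus I \to I$ (a split epi, hence $\mathcal{Q}$-epi) and $\psi = (0,g)$ for $g \neq 0$. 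What $\mathcal{Q}$-injectivity of $I$ actually says, depending on which equivalent formulation one uses, is either (a) every map $A \to I$ extends along a $\mathcal{Q}$-\emph{mono} $\phi : A \to B$, i.e.\ one for which $\phi \otimes \o_\CC/v$ is split \emph{mono} with a \emph{retraction} $\rho$; or (b) for every $\mathcal{Q}$-epi $\phi : A \to B$, the map $\phi^* : [B, I] \to [A, I]$ is a \emph{mono}.

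With the direction corrected your machinery closes the argument immediately; no diagram chase is needed. For (a): with $\rho$ a retraction of $\phi \otimes \o_\CC/v$, set $\Psi := \psi^\flat \circ \rho$; then
\[
\Psi \circ (\phi \otimes \o_\CC/v) = \psi^\flat \circ \rho \circ (\phi \otimes \o_\CC/v) = \psi^\flat,
\]
and $\tilde\psi := \Psi \circ (B \otimes \eta)$ gives $\tilde\psi \circ \phi = \psi$ exactly as in your computation. For (b): given $f : B \to \o_\CC/v \otimes X$ with $f \circ \phi \simeq 0$, form $f^\flat : B \otimes \o_\CC/v \to \o_\CC/v \otimes X$ as you did; then $f^\flat \circ (\phi \otimes \o_\CC/v) = (f\phi)^\flat \simeq 0$, and since $\phi \otimes \o_\CC/v$ has a section $\sigma$,
\[
f^\flat \simeq f^\flat \circ (\phi \otimes \o_\CC/v) \circ \sigma \simeq 0,
\]
whence $f = f^\flat \circ (B \otimes \eta) \simeq 0$. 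The claim that one can recover $\Psi$ with $\Psi \circ (\phi \otimes \o_\CC/v) = \psi^\flat$ from a section $\sigma$ ``via a careful diagram chase'' cannot succeed: a general $\psi^\flat$ does not vanish on $\fib(\phi \otimes \o_\CC/v)$, so such a $\Psi$ need not exist.
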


\begin{proof}
  Using the right unital multiplication on $\o_\CC/v$ 
  conclusions (1) and (2) follow from the same argument as in the proof of \cite[Lem. 5.67]{PP21}.
  (3) follows from (1) and (2).
  Again using the right unital multiplication on $\o_\CC/v$,
  conclusion (4) follows from the same argument as in \cite[Lem. 5.68]{PP21}.
\end{proof}

There is one more modification we need to make in order to link up with the material from \Cref{sec:budget}: We want to have a \emph{presentable} category deforming $\CC$.

\begin{cnstr} \label{cnstr:renorm2}
  Recall that we have already arranged in \Cref{rmk:force-pres} that $\o$ and $\II$ are compact and
  $\CC$ is generated under tensor products and finite (co)limits by these generators.
  We let $\Def(\CC;\, \mathcal{Q})$ denote the ind-completion of $\mathcal{D}^\omega(\CC^\omega; \mathcal{Q})$.
  This renormalization fits into a diagram of presentably $\E_m$-monoidal categories and $\E_m$-monoidal, filtered colimit preserving functors
  \begin{center}
    \begin{tikzcd}
      & \Def(\CC;\,\mathcal{Q}) \ar[dr, "(-)^{\tau=1}"] & \\
      \CC \ar[ur, "\nu"] \ar[rr, "\Id"] & & \CC
    \end{tikzcd}
  \end{center}
  which agrees with the one from \ref{rec:def} on compact objects.
  \tqed
\end{cnstr}

We end the section by proving a vanishing lemma which serves as the analog of \Cref{lem:small-van} in $\Def(\CC;\,\mathcal{Q})$.

\begin{cnstr} \label{cnstr:v-tilde}
  Since $\o_\CC \to \o_\CC/v$ is $(\o_\CC/v)$-split mono we have a cofiber sequence
  \[ \nu\o_\CC \to \nu (\o_\CC/v) \to \nu (\Sigma \II). \]
  We write $\wt{v}$ for the associated boundary map
  \[ \wt{v} : \Sigma^{-1} \nu \II [1] \to \nu\o_\CC. \]
  \tqed
\end{cnstr}

\begin{lem} \label{lem:vanishing}
  The cofiber of the map $\wt{v}^q : (\Sigma^{-1} \nu \II [1])^{\otimes q} \to \nu \o_\CC $
  in $\Def(\CC;\, \mathcal{Q})$ has
  \[ [\Sigma^{-s}\nu X,\ \nu\o_\CC/\wt{v}^q ] = 0 \]
  for every $X \in \CC^\omega$ and $s \geq q$.
\end{lem}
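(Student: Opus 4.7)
The plan is to induct on $q$. For the base case $q=1$, Construction~\ref{cnstr:v-tilde} is set up precisely so that $\nu\o_\CC/\wt{v} \simeq \nu(\o_\CC/v)$. Since $\o_\CC/v = (\o_\CC/v) \otimes \o_\CC$ is $\mathcal{Q}$-injective by Lemma~\ref{lem:enough-inj}(1), property (7) of Recollection~\ref{rec:def} gives $[\Sigma^{-s}\nu X, \nu(\o_\CC/v)] = 0$ for every $s \geq 1$.

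For the inductive step, abbreviate $A \coloneqq \Sigma^{-1}\nu\II[1]$ and factor $\wt{v}^q$ as the composite
$$ A^{\otimes q} \xrightarrow{\,\wt{v} \otimes \mathrm{id}\,} A^{\otimes (q-1)} \xrightarrow{\,\wt{v}^{q-1}\,} \nu\o_\CC. $$
The standard cofiber-sequence-of-cofibers (octahedral axiom) applied to this composite produces a cofiber sequence
$$ \bigl(\nu\o_\CC/\wt{v}\bigr) \otimes A^{\otimes(q-1)} \longrightarrow \nu\o_\CC/\wt{v}^q \longrightarrow \nu\o_\CC/\wt{v}^{q-1}. $$
The right-hand term has no maps from $\Sigma^{-s}\nu X$ once $s \geq q-1$, by the inductive hypothesis; a fortiori for $s \geq q$. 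For the left-hand term, the $\E_m$-monoidality of $\nu$ together with the identification $\nu(\Sigma-) \simeq \nu(-)[1]$ from (5) rewrite it as $\Sigma^{-(q-1)}\,\nu\bigl(\o_\CC/v \otimes \II^{\otimes (q-1)}\bigr)[q-1]$. Shifting the $\Sigma^{-(q-1)}$ onto the source and pulling the $[q-1]$ back through $\nu$ gives $\nu\bigl(\Sigma^{q-1}(\o_\CC/v \otimes \II^{\otimes (q-1)})\bigr)$, whose target is $\mathcal{Q}$-injective (again by Lemma~\ref{lem:enough-inj}(1), since suspensions of injectives are injective). Property (7) of Recollection~\ref{rec:def} then vanishes the group once the net negative suspension on the source exceeds zero, i.e.\ once $s - (q-1) \geq 1$, that is $s \geq q$. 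The long exact sequence associated to the cofiber sequence above then pinches the middle term to zero.

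The main thing to verify carefully is the bookkeeping in the second paragraph: one must track how the two a priori distinct autoequivalences $\Sigma$ and $[1]$ on $\Def(\CC;\mathcal{Q})$ interact when one rewrites $A^{\otimes(q-1)} = \Sigma^{-(q-1)}\nu(\II^{\otimes(q-1)})[q-1]$ and then feeds the result into the $\mathcal{Q}$-injective vanishing, which is phrased purely in terms of $\Sigma$. Once one commits to the formula $\nu(\Sigma^k Y) \simeq \nu(Y)[k]$ from (5) and uses that $\mathcal{Q}$-injectivity is closed under $\Sigma$, the inductive step reduces to the single numerical inequality $s - (q-1) > 0$, matching the claimed range $s \geq q$.
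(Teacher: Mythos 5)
Your proof is correct and matches the paper's argument: same cofiber sequence $(\nu\o_\CC/\wt{v})\otimes(\Sigma^{-1}\nu\II[1])^{\otimes(q-1)} \to \nu\o_\CC/\wt{v}^q \to \nu\o_\CC/\wt{v}^{q-1}$, same induction on $q$, and the same appeal to $\mathcal{Q}$-injectivity of $\o_\CC/v\otimes(-)$ via \Cref{lem:enough-inj}(1) together with \ref{rec:def}(7). The only cosmetic difference is that the paper first notes that, since everything is compact, one may work in $\mathcal{D}^\omega(\CC^\omega;\mathcal{Q})$ where (7) is stated; you leave this implicit, but it is harmless given the hypothesis $X\in\CC^\omega$.
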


\begin{proof}
  The restriction to $X \in \CC^\omega$ lets us move back to $\mathcal{D}^\omega(\CC^\omega;\,\mathcal{Q})$.
  We proceed by induction on $q$ using the cofiber sequence
  \[ (\Sigma^{-1}\nu \II[1])^{\otimes q-1} \otimes \nu\o_\CC/\wt{v} \to \nu\o_\CC/\wt{v}^{q} \to \nu\o_\CC/\wt{v}^{q-1}. \]
  Using the equivalence between $\nu(\o_\CC/v)$ and $\nu\o_\CC/\wt{v}$ from \Cref{cnstr:v-tilde}
  and the fact that $\nu$ is monoidal we have an equivalence
  \[ (\nu \II[1])^{\otimes q-1} \otimes \nu\o_\CC/\wt{v} \simeq \nu(\Sigma^{q-1} \II^{\otimes q-1} \otimes \o_\CC/v). \]
  Since $\Sigma^{q-1} \II^{\otimes q-1} \otimes \o_\CC/v$ is $\mathcal{Q}$-injective (see \Cref{lem:enough-inj}) we then have that 
  \[ [\Sigma^{-s}\nu X,\ \ (\Sigma^{-1}\nu \II[1])^{\otimes q-1} \otimes \nu\o_\CC/\wt{v}] = 0 \]
  (see \ref{rec:def}(7))
  for every $X \in \CC^\omega$ and $s \geq q$.
\end{proof}

\section{$\E_n$-algebra quotients} \label{sec:quo}

With all the preparation finished we are now ready to prove the main theorem.
Our proof follows the pattern established in \Cref{sec:even-Moore} with the key difference being that we use the stable, presentably $\E_n$-monoidal deformation $\Def(\CC; \mathcal{Q})$ of $\CC$ from the previous section in place of $\Syn_{\F_2}$.

\begin{dfn}\label{dfn:v-compatible}
  We say that an $\E_n$-algebra structure on $\o_\CC/v^q$ is \emph{$v$-compatible} if it equivalent to
  $ (\nu\o_\CC/\wt{v}^q)^{\tau=1}$ for some $\E_n$-algebra structure on the quotient
  $\nu\o_\CC/\wt{v}^q$ in $\Def(\CC; \mathcal{Q})$.
  \tqed
\end{dfn}

\begin{thm}[\Cref{thm:En-quo}] \label{thm:En-quo-body}
  If $\o_\CC/v$ admits a right unital multiplication,
  then there exists a unique up to equivalence $v$-compatible $\E_n$-algebra $\o/v^{q}$ for each $q > n$
  and these $\E_n$-algebras fit into a tower
  \[ \cdots \longrightarrow \o/v^{n+3} \longrightarrow \o/v^{n+2} \longrightarrow \o/v^{n+1}. \]
\end{thm}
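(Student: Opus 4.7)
The plan is to mirror \Cref{sec:even-Moore} inside the deformation $\Def(\CC;\mathcal{Q})$: first produce an $\E_n$-algebra structure on $\nu\o_\CC/\wt{v}^q$ by killing all the obstructions of \Cref{prop:obs} applied to the map $\wt{v}^q : (\Sigma^{-1}\nu\II[1])^{\otimes q} \to \nu\o_\CC$ from \Cref{cnstr:v-tilde}, and then descend via the $\E_m$-monoidal localization $(-)^{\tau=1}$ from \Cref{cnstr:renorm2} to obtain the desired $v$-compatible $\E_n$-algebra structure on $\o_\CC/v^q$.

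After refining via \Cref{cor:obs-degs}, the obstructions take the form
\[
\theta_{k,\alpha} \in \bigl[\Sigma^{-2-n-c_\alpha}\bigl(\Sigma^{n+1}(\Sigma^{-1}\nu\II[1])^{\otimes q}\bigr)^{\otimes k},\ \nu\o_\CC/\wt{v}^q\bigr]
\]
for $k \geq 2$ and $0 \leq c_\alpha \leq (n-1)(k-1)$. Using the $\E_m$-monoidality of $\nu$ and the equivalence $\nu(-)[1] \simeq \nu(\Sigma -)$ from \ref{rec:def}(5), the source rewrites as $\Sigma^{-s}\nu(\Sigma^{qk}\II^{\otimes qk})$ with $s = k(q-n-1) + 2 + n + c_\alpha$. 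For every $q > n$ and $k \geq 2$ this satisfies $s \geq q$: if $q = n+1$ then $s = 2 + n + c_\alpha \geq q$, while if $q > n+1$ then $k(q-n-1) \geq 2(q-n-1)$ gives $s \geq 2q - n \geq q$. Since $\II$ is compact and $\CC^\omega$ is closed under tensor products and suspension, $\Sigma^{qk}\II^{\otimes qk} \in \CC^\omega$, so \Cref{lem:vanishing} kills every $\theta_{k,\alpha}$. Uniqueness is handled by the same computation one shift lower (\Cref{rmk:uniqueness}): the relevant groups produce $s = k(q-n-1) + 1 + n + c_\alpha$, which is again $\geq q$ for all $q > n$ and $k \geq 2$.

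For the tower, the canonical reduction maps $\nu\o_\CC/\wt{v}^{q+1} \to \nu\o_\CC/\wt{v}^q$ exist as underlying maps in $\Def(\CC;\mathcal{Q})$, and an analogous obstruction theory for spaces of $\E_n$-algebra maps (whose obstructions lie in the same groups we just showed to vanish) promotes them to $\E_n$-algebra maps; alternatively, the uniqueness clause implies that the space of $\E_n$-algebra lifts of each reduction is non-empty. The main bookkeeping hurdle is the conversion between the external shift $[1]$ and the internal stable suspension $\Sigma$ of $\Def(\CC;\mathcal{Q})$, so that each obstruction source truly lands in the form $\Sigma^{-s}\nu Y$ with $Y \in \CC^\omega$ and $s \geq q$; this is handled uniformly by the single identity $\nu(\Sigma -) \simeq \nu(-)[1]$ combined with monoidality of $\nu$.
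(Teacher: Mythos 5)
Your existence argument is essentially identical to the paper's and the index arithmetic ($s = k(q-n-1)+2+n+c_\alpha \geq q$) is correct, but both the uniqueness and the tower are not handled with the right tool.

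For uniqueness, you appeal to \Cref{rmk:uniqueness}, which only says that the obstruction-theoretic procedure of \Cref{prop:obs} produces \emph{at most one} $\E_n$-algebra structure when the groups $[\Sigma^{-1-n}\mathrm{D}_k^{c\E_n}(\Sigma^{n+1}X),\o_\CC/r]$ vanish. However, $v$-compatibility in \Cref{dfn:v-compatible} quantifies over \emph{arbitrary} $\E_n$-algebra structures on $\nu\o_\CC/\wt{v}^q$ in $\Def(\CC;\mathcal{Q})$, and it is not clear that every such structure arises from the filtered lifting procedure of \Cref{prop:obs}. To rule that out one would need to show, roughly, that every $\E_n$-algebra structure on $\nu\o_\CC/\wt{v}^q$ lifts to a filtered $\E_n$-algebra with the prescribed associated graded, which is not established. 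The paper instead runs a \emph{map-construction} obstruction theory: using the pushout squares of \Cref{prop:obs} it produces obstructions $\gamma_{k,\alpha}$ to extending an $\E_n$-algebra map from the constructed $\nu\o_\CC/\wt{v}^q$ into an arbitrary $\E_n$-algebra $R$ with underlying object $\nu\o_\CC/\wt{v}^w$, indexed by $k \geq 1$ (not $k \geq 2$) and living in groups with a \emph{different} shift, $[\Sigma^{(n+1-q)k-n-1-c_\alpha}\nu((\Sigma\II)^{\otimes qk}),\nu\o_\CC/\wt{v}^w]$. For $q \geq w$ these all vanish by \Cref{lem:vanishing}, and in the boundary case $q=w$ one must \emph{choose} the nullhomotopy of the arity-one obstruction $\gamma_1$ to be the one exhibiting $\Sigma^{-q}\nu((\Sigma\II)^{\otimes q}) \to \nu\o_\CC \to \nu\o_\CC/\wt{v}^q$ as a cofiber sequence so that the resulting $\E_n$-map is actually an equivalence. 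Your claim that the map obstructions ``lie in the same groups we just showed to vanish'' is therefore not quite right, and your alternative justification for the tower — that uniqueness of the $\E_n$-structures on the individual terms forces the space of $\E_n$-lifts of the reduction maps to be non-empty — does not follow: uniqueness of each object's algebra structure gives no information about maps between them. The map obstruction theory is what simultaneously produces the tower and the uniqueness, and it needs to be set up explicitly.
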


\begin{proof}

  We begin by producing an $\E_n$-algebra structure on $\nu\o_\CC/\wt{v}^q$ for $q > n$.
  Applying the obstruction theory from \Cref{prop:obs} and \Cref{cor:obs-degs}
  to the map $\wt{v}^{q}$ we a obtain sequence of inductively defined obstructions
  \[ \theta_{k,\alpha} \in
    [ \Sigma^{-2-n-c_\alpha}( \Sigma^{n+1} (\Sigma^{-1}\nu \II [1])^{\otimes q} )^{\otimes k} ,\ \  \nu\o_\CC/\wt{v}^{q} ] \]
  where $k \geq 2$ and  $0 \leq c_\alpha \leq (n-1)(k-1)$
  whose vanishing implies $\nu\o_\CC/\wt{v}^{q}$ admits an $\E_n$-algebra structure.
  Since $\nu$ is $\E_n$-monoidal we can rewrite the source of $\theta_{k,\alpha}$ as 
  \[ \Sigma^{(n+1-q)k-n-2-c_\alpha} \nu \left( (\Sigma \II)^{\otimes qk} \right). \]
  Since $q \geq n+1$ and $n+2+c_\alpha \geq n+1$, \Cref{lem:vanishing} now tells us that the group in which $\theta_{k,\alpha}$ lives is trivial.

  In order to prove the uniqueness statement and produce the desired tower
  we now examine the space of maps from the $\E_n$-algebra $\nu\o_\CC/\wt{v}^q$ constructed above
  to any other $\E_n$-algebra $R$ with the underlying object $\nu\o_\CC/\wt{v}^w$.
  The pushout squares from \Cref{prop:obs} can be interpreted as providing an obstruction theory for producing $\E_n$-algebra maps $\nu\o_\CC/\wt{v}^q \to R$.
  Using \Cref{cor:obs-degs} we can sub-divide these obstructions into obstructions
  \[ \gamma_{k,\alpha} \in
    [ \Sigma^{(n+1-q)k-n-1-c_\alpha} \nu \left( (\Sigma \II)^{\otimes qk} \right) ,\ \  \nu\o_\CC/\wt{v}^{w} ] \]
  where $k \geq 1$ and $0 \leq c_\alpha \leq (n-1)(k-1)$. 
  When $q \geq w$ \Cref{lem:vanishing} tells us that these groups are trivial and therefore we obtain the desired map.
  In the case $q=w$ the obstruction $\gamma_1$ is the composite
  \[ \Sigma^{-q}\nu((\Sigma \II)^{\otimes q} ) \xrightarrow{\wt{v}^q} \nu\o_\CC \to \nu\o_\CC/\wt{v}^{q} \]
  and if we pick the nullhomotopy of $\gamma_1$ to be one which makes this into a cofiber sequence, then we will obtain an equivalence of $\E_n$-algebras $\nu\o_\CC/\wt{v}^q \to R$, proving the uniqueness assertion.
\end{proof}


We conclude by deducing the remaining theorems from the introduction 
as corollaries of the main theorem.

\begin{cor}[\Cref{thm:ring-quo}]
  Suppose we are given an $\E_{m+1}$-algebra $A \in \Sp$ with $m \geq 2$
  and a class $v \in \pi_{*}(A)$
  such that $A/v$ admits a unital multiplication.
  Then $A/v^q$ admits an $\E_n$-$A$-algebra structure for each $n \leq m$ and $q>n$.
\end{cor}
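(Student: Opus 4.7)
The plan is to apply the main \Cref{thm:En-quo-body} with $\CC = \Mod_A$, the category of $A$-module spectra. Since $A$ is an $\E_{m+1}$-algebra in $\Sp$ with $m \geq 2$, the category $\Mod_A$ is presentably $\E_m$-monoidal under the relative smash product $\otimes_A$, with unit $\o_\CC = A$; it is stable and compactly generated by the shifts of $A$, so (after restricting to the subcategory generated by $A$ and $\Sigma^{|v|}A$ and ind-completing, as allowed by \Cref{rmk:force-pres}) all the structural hypotheses of the main theorem are in place.

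Next, take $\II := \Sigma^{|v|}A \in \Mod_A$ and let $v : \II \to A = \o_\CC$ be the $A$-module map representing the given homotopy class $v \in \pi_*(A)$. The forgetful functor $\Mod_A \to \Sp$ is a left adjoint, hence preserves cofibers, so the cofiber $\o_\CC/v$ computed in $\Mod_A$ agrees with the spectrum $A/v$. By hypothesis $A/v$ admits a unital multiplication; since the unit $A \to A/v$ is just the canonical cofiber map (and in particular an $A$-module map), this multiplication automatically lives in $\Mod_A$, producing the right unital multiplication on $\o_\CC/v$ required by \Cref{thm:En-quo-body}.

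Applying \Cref{thm:En-quo-body} then yields, for every $n \leq m$ and every $q > n$, a $v$-compatible $\E_n$-algebra structure on $\o_\CC/v^q = A/v^q$ in $\Mod_A$. By definition an $\E_n$-algebra in $\Mod_A$ is an $\E_n$-$A$-algebra, so we are done. The only genuinely subtle step in the above plan is promoting the unital multiplication on $A/v$ from $\Sp$ to $\Mod_A$; this is mild because the $A$-module structure on the cofiber is canonical, but it is the point where the hypothesis on $v$ is used and where the reader should be convinced that nothing is lost by working throughout in $\Mod_A$.
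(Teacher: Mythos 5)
Your overall plan is exactly the paper's: take $\CC = \Mod_A$, which is stably presentably $\E_m$-monoidal since $A$ is $\E_{m+1}$, and apply \Cref{thm:En-quo-body} to the $A$-module map $v : \Sigma^{|v|}A \to A$. The paper's proof is one sentence saying precisely this. The structural remarks about compact generation and \Cref{rmk:force-pres} are fine, and noting that the cofiber in $\Mod_A$ has underlying spectrum $A/v$ is correct.

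The one place where your reasoning goes wrong is the step you yourself flag as ``the only genuinely subtle step.'' You claim that because the unit $A \to A/v$ is an $A$-module map, a unital multiplication on the spectrum $A/v$ ``automatically lives in $\Mod_A$.'' This implication does not hold. A unital multiplication in $\Sp$ is a map $A/v \otimes_{\Ss} A/v \to A/v$ restricting to the identity along the unit, whereas the hypothesis of \Cref{thm:En-quo-body} asks for a map $A/v \otimes_A A/v \to A/v$ in $\Mod_A$ restricting to the identity along the $A$-module unit. An $A$-linear multiplication always forgets to an $\Ss$-linear one via the canonical map $A/v \otimes_{\Ss} A/v \to A/v \otimes_A A/v$; the converse direction, which is what you need, is not automatic, and the fact that the unit map is $A$-linear does not supply it. In fact no promotion is needed: the phrase ``$A/v$ admits a unital multiplication'' in \Cref{thm:ring-quo} is intended to be read in $\Mod_A$, as is made precise by the equivalent reformulation $Q_1(v) \equiv 0 \pmod v$ and by \Cref{lem:A2}, which identifies $\overline{Q_1}(v) \in [\Sigma \II^{\otimes 2}, \o_\CC/v]$ (an $A$-linear obstruction group) with the obstruction to the $A$-linear unital multiplication. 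So you should simply drop the promotion step and read the hypothesis $A$-linearly; with that correction the argument is complete and matches the paper.
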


\begin{proof}
  Since $A$ is an $\E_{m+1}$-algebra it has an $\E_m$-monoidal category of left modules in which we can apply \Cref{thm:En-quo} to $v$.
\end{proof}

Note that the statement proved here is slightly different from the one which appeared in the introduction. In order to bridge the gap we include the next lemma, which is likely well known to experts, but for which we could not find a reference in the literature.

\begin{lem}
  Given an $\E_{m+1}$-algebra $A \in \Sp$ with $m \geq 2$ and a class $v \in \pi_{2w}(A)$
  we can identify the obstruction $\overline{Q_1}(v)$ to $A/v$ admitting a unital multiplication with the reduction mod $v$ of $Q_1(v)$.
\end{lem}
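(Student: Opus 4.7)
The plan is to apply the obstruction theory of \Cref{exm:E1} inside the $\E_m$-monoidal category $\CC = \Mod_A$ of left $A$-modules, and then compare the resulting Bockstein with the classical definition of $Q_1$ via the extended power. Taking $\II = \Sigma^{2w}A$ and $v : \II \to A$ the given class, \Cref{exm:E1} combined with the simplification of \Cref{rmk:E1-explicit} identifies $\overline{Q_1}(v) \in \pi_{4w+1}(A/v)$ as the $\tau$-Bockstein of the inclusion of the degree-$2$ free summand $\Sigma^{4w+2}A(2)$ of the filtered free $\E_1$-$A$-algebra $A /\!\!/ \tau v$.

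Next I would bootstrap the $\E_1$ picture to an $\E_m$ picture, using that $\CC$ is in fact $\E_m$-monoidal. The canonical map of filtered $A$-algebras $A /\!\!/_{\E_1} \tau v \to A /\!\!/_{\E_m} \tau v$, restricted to degree-$2$ associated graded, factors the bottom-cell inclusion $\Sigma^{4w+2}A(2) \hookrightarrow A /\!\!/_{\E_1} \tau v$ through the richer graded object $\Sigma^{-1-m}\mathrm{D}_2^{c\E_m}(\Sigma^{2w+m+1}A)(2)$ appearing in \Cref{lem:sqz-res}. Combined with the standard Costenoble--Waner / norm-map identification of $\mathrm{D}_2^{c\E_m}$ on even-shifted spheres with a suitable suspension of the classical extended power $\mathrm{D}_2^{\E_m}(\Sigma^{2w}\Ss)$, the summand $\Sigma^{4w+2}A$ corresponds to the top of the two lowest cells of $\mathrm{D}_2^{\E_m}(\Sigma^{2w}\Ss) \otimes A$. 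Naturality of the $\tau$-Bockstein then identifies $\overline{Q_1}(v)$ with the Bockstein of this second cell, transported along $v^{\otimes 2}$ and the multiplication on $A$.

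By the very definition of $Q_1$, the composite
\[ \Sigma^{4w+1}\Ss \longrightarrow \mathrm{D}_2^{\E_m}(\Sigma^{2w}\Ss) \xrightarrow{\mu \circ v^{\otimes 2}} A \]
of the second-cell inclusion with the $\E_m$-multiplication is $Q_1(v) \in \pi_{4w+1}(A)$. Tracing through the identifications of the previous paragraph exhibits $\overline{Q_1}(v)$ as the image of this class in $\pi_{4w+1}(A/v)$, yielding the claimed equality modulo $v$.

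The main obstacle is the Spanier--Whitehead / norm-map bookkeeping needed to pass between the limit-based $\mathrm{D}_k^{c\E_n}$ appearing in bar-cobar duality and the colimit-based $\mathrm{D}_k^{\E_n}$ classically carrying $Q_1$, together with the careful tracking of the suspension shift of \Cref{lem:sqz-res}. Once these identifications are in place, the lemma reduces to naturality of the Bockstein.
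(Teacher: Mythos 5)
The paper's proof takes a noticeably more direct route. Rather than passing through the $\E_m$-cofiber or invoking any duality between the limit-based $\mathrm{D}_2^{c\E_m}$ and the colimit-based $\mathrm{D}_2^{\E_m}$, the paper observes that the filtered $\E_1$-cofiber $A[\tau]/\!\!/\tau v$ admits a second description as the relative tensor product of $A[\tau]$ with $\Ss[\tau]$ over the free filtered $\E_2$-algebra on a generator $x$ in degree $2w$ and filtration $1$, mapping to $\tau v$ and $0$ respectively. The arity-$2$ piece of this free $\E_2$-algebra is explicitly $\Ss^{4w} \oplus \Ss^{4w+1}$ with bottom cell $x^2$ and top cell $Q_1(x)$, so through filtration $2$ one may replace it by a free $\E_1$-algebra on $x$ and $Q_1(x)$ mapping to $\tau v$ and $\tau^2 Q_1(v)$. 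Reading off the filtration-$2$ component of the relative tensor product then gives $\cof(\Sigma^{2w}A \oplus \Sigma^{4w+1}A \xrightarrow{(v,\ Q_1(v))} A)$, and the attaching map of the top cell to $A/v$ is visibly the mod-$v$ reduction of $Q_1(v)$. The operation $Q_1$ thus enters through the \emph{colimit-based} arity-$2$ term of a free $\E_2$-algebra sitting \emph{under} $A[\tau]/\!\!/\tau v$, not through the obstruction-theoretic cells of \Cref{lem:sqz-res}.

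Your proposal has a genuine gap at precisely the point where you invoke a ``Costenoble--Waner / norm-map identification.'' The objects $\Sigma^{-1-m}\mathrm{D}_2^{c\E_m}(\Sigma^{2w+m+1}A)$ from \Cref{lem:sqz-res} and \Cref{prop:obs} are \emph{sources of obstruction maps}, derived from cofree coalgebras in the bar-cobar duality, and they play a role on the side of the quotient $A/v$; the extended power $\mathrm{D}_2^{\E_m}(\Sigma^{2w}\Ss)$ carrying $Q_1$ is the arity-$2$ term of a free algebra mapping \emph{to} $A$. These two are not the same thing in different coordinates, and a norm/duality comparison between them, while morally plausible (the relevant configuration space is a closed manifold so such a duality does hold in the abstract), would need a careful verification that it intertwines the $\tau$-Bocksteins, the unit-map comparison $A/\!\!/_{\E_1}\tau v \to A/\!\!/_{\E_m}\tau v$, and the suspension shifts. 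None of this bookkeeping is carried out, and it is exactly the content of the lemma. The phrase ``tracing through the identifications of the previous paragraph'' is doing the work of the entire argument. By contrast, the paper's relative-tensor-product description sidesteps all of this by putting $Q_1$ directly into the construction of $A[\tau]/\!\!/\tau v$, so that the identification becomes a one-line computation of a filtration-$2$ cofiber. If you want to salvage your route, the factorization through the $\E_m$-cofiber and the Bockstein-naturality step need to be stated and proved; as written they assert rather than establish the key compatibilities.
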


\begin{proof}
  In proving this lemma we pick up where \Cref{lem:A2} left off, passing to the filtered setting.
  We write $X[\tau]$ for the image of $X \in \Sp$ under the unit map $\Sp \to \Sp^{\Fil}$.\footnote{This notation is meant to evoke that the underlying graded object of $X[\tau]$ looks like a free module over $\Ss[\tau]$ on a copy of $X$ placed in degree $0$.}

  The $\E_1$-cofiber $A[\tau]/\!\!/\tau v$ admits a second description as the relative tensor product of $A[\tau]$ with $\Ss[\tau]$ over a free filtered $\E_2$-algebra on a class $x$ in degree $2w$ and filtration $1$ which maps to $\tau v$ in $A[\tau]$ and to zero in $\Ss[\tau]$.
  The arity $2$ component of the free $\E_2$-algebra on $\Ss^{2w}$ is given by
  $\Ss^{4w} \oplus \Ss^{4w+1}$ where the bottom cell is $x^2$ and the top cell is $Q_1(x)$. 
  Thus, through filtration $2$, we can replace the free $\E_2$-algebra with a free $\E_1$-algebra on two classes $x$ and $Q_1(x)$ such that $x$ maps to $\tau v$ and $Q_1(x)$ maps to $\tau^2 Q_1(v)$.
  This lets us identify the filtration $2$ component of $A[\tau]/\!\!/\tau v$ with
  \[ \cof \left( \Sigma^{2w}A \oplus \Sigma^{4w+1}A \xrightarrow{(v,\ Q_1(v))} A \right). \]
  Unrolling the definition we see that the attaching map of the top cell to the copy of $A/v$ is the obstruction $\overline{Q_1}(v)$ of \Cref{lem:A2} and we can thereby identify $\overline{Q_1}(v)$ with the reduction mod $v$ of $Q_1(v)$.    
\end{proof}

\begin{rmk} \label{rmk:sq-fine}
  The Cartan formula from \cite[Prop. V.1.10]{Hinf}\footnote{Here we are using that we are in the stable range where $\E_\infty$ and $\E_{m+1}$ power operations agree.} tells us that for an $\E_{m+1}$-algebra $A$ with $m \geq 2$ and $x,y \in \pi_{2*}(A)$ we have
  \[ Q_1(xy) \equiv Q_1(x)y^2 + x^2Q_1(y) + c \eta x^2y^2 \]
  for some integer $c$.
  In particular we find that
  \[ \overline{Q_1}(x^2) \equiv Q_1(x^2) \equiv 2x^2Q_1(x) + c\eta x^4 \equiv 0 \pmod{x^2}. \]
  Thus, the square of an even dimensional class always satisfies the conditions of \Cref{thm:ring-quo}.
  \tqed
\end{rmk}

\begin{cor}[\Cref{thm:mult-Moore}, odd primes]
  Applying \Cref{thm:ring-quo} with $\CC=\Sp$ and $v = p$ we obtain
  an $\E_n$-algebra structure on $\Ss/p^{n+1}$.
\end{cor}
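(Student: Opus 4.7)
The plan is to apply \Cref{thm:ring-quo} directly with $\CC = \Sp$, $A = \Ss$, and $v = p$, so essentially all of the work consists in verifying the hypotheses of that theorem in this case.

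First I would note that the sphere spectrum is an $\E_\infty$-algebra and therefore an $\E_{m+1}$-algebra for every $m \geq 2$, and that $p \in \pi_0(\Ss) = \pi_{2 \cdot 0}(\Ss)$ lies in an even degree, so the ambient setup required by \Cref{thm:ring-quo} holds for free. The only substantive check is the congruence $Q_1(p) \equiv 0 \pmod{p}$ (equivalently, that $\Ss/p$ admits a unital multiplication). For $p$ odd this is essentially immediate: by the footnote in \Cref{thm:ring-quo} one has $2 Q_1(v) = 0$ universally, and since $p$ is odd the element $2$ is a unit in $\Ss/p$, which forces $Q_1(p) \equiv 0 \pmod{p}$. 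Alternatively one may invoke the classical fact, recalled in the introduction, that $\Ss/p$ carries a unital multiplication at odd primes.

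Having verified the hypothesis, I would then apply \Cref{thm:ring-quo} with $n$ the given integer, $m = \max(n,2)$ (so that both $m \geq 2$ and $n \leq m$ hold), and $q = n+1 > n$. The conclusion is that $\Ss/p^{n+1}$ admits an $\E_n$-$\Ss$-algebra structure, which is the same as an $\E_n$-algebra structure in $\Sp$, giving the odd-primary part of \Cref{thm:mult-Moore}.

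The only point that could conceivably be an obstacle is the verification of the $Q_1$ condition, but at odd primes it is trivial thanks to the $2$-invertibility of $\Ss/p$; the corollary therefore requires essentially no work beyond matching notations with the hypotheses of \Cref{thm:ring-quo}.
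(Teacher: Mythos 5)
Your proposal is correct and matches the paper's argument exactly: the paper's corollary is proved by directly citing \Cref{thm:ring-quo} with $A = \Ss$, $v = p$, $q = n+1$, and the hypothesis $Q_1(p) \equiv 0 \pmod{p}$ is discharged by the footnote to that theorem (since $2Q_1(v)=0$ always and $2$ is invertible in $\pi_*(\Ss/p)$ for $p$ odd, equivalently because $\Ss/p$ is classically known to be unital at odd primes). You merely spell out the bookkeeping that the paper leaves implicit.
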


\begin{rmk} 
  Although $\Ss/2$ does not admit a unital multiplication,
  $\overline{Q_1}(4) = 0$ and therefore $\Ss/4$ admits a unital multiplication.
  If we apply \Cref{thm:ring-quo} with $v = 4$, then we obtain an $\E_n$-algebra structure on $\Ss/2^{2(n+1)}$.
  Note that this is \emph{less} structure than is provided by \Cref{thm:mod2n}.
  This discrepancy suggests that there is still room for improvement at the prime $2$.
  Specifically, we suspect that the optimal value of $q$ for which $\Ss/2^q$ is an $\E_n$-algebra is not much larger than $n$ in general.
  \tqed
\end{rmk}


\begin{cor}[\Cref{thm:gen-Moore}]
  For each $h$ and $n$ there exists a generalized Moore spectrum $\Ss/(p^{i_0},\dots,v_{h-1}^{i_{h-1}})$ of type $h$ which admits an $\E_n$-algebra structure.
\end{cor}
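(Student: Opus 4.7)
The plan is to induct on $h$, producing at each stage a type-$i$ generalized Moore spectrum $A_i$ equipped with an $\E_{n+h-i}$-algebra structure (assuming without loss of generality $n \geq 2$; the small cases $n \leq 1$ follow by forgetting $\E$-structure). The base case $A_0 = \Ss$ has an $\E_\infty$-algebra structure, and the terminal $A_h$ will be the desired type-$h$ $\E_n$-algebra.

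For the inductive step, suppose $A_{i-1}$ has been produced as a finite $\E_{n+h-i+1}$-algebra of type $i-1$. By the Hopkins--Smith periodicity theorem, $A_{i-1}$ admits a $v_{i-1}$ self-map; since $A_{i-1}$ is an algebra, this self-map is represented by a class $v \in \pi_d(A_{i-1})$ for some $d > 0$ (whose associated multiplication map $\Sigma^d A_{i-1} \to A_{i-1}$ is itself a $v_{i-1}$ self-map). Passing to a sufficiently large iterate $v^N$ preserves the chromatic nature of the self-map, so we may freely adjust the degree $Nd$; in particular at $p=2$ we choose $N$ so that $v^N$ is a square of an even-degree class, while at odd primes no such adjustment is needed. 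Under these conditions $A_{i-1}/v^N$ admits a unital multiplication: automatically at odd primes (since $Q_1$ is $2$-torsion), and at $p=2$ by the Cartan formula recorded in \Cref{rmk:sq-fine}. Applying \Cref{thm:ring-quo} to the pair $(A_{i-1}, v^N)$ with $m = n+h-i \geq 2$ and any $q > n+h-i$ now yields an $\E_{n+h-i}$-$A_{i-1}$-algebra structure on $A_i := A_{i-1}/v^{Nq}$. Since $A_i$ is the cofiber of a power of a $v_{i-1}$ self-map on the type-$(i-1)$ spectrum $A_{i-1}$, it is itself a type-$i$ generalized Moore spectrum of the required form.

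After $h$ iterations one arrives at $A_h$, an $\E_n$-algebra of type $h$ and of the shape $\Ss/(p^{i_0}, v_1^{i_1}, \dots, v_{h-1}^{i_{h-1}})$ for suitable exponents. The main obstacle is the degree-parity bookkeeping at $p=2$: a generic Hopkins--Smith self-map may have odd degree, while the Cartan-formula argument of \Cref{rmk:sq-fine} applies only to squares of even-degree classes. This is resolved by the freedom to replace any self-map by a sufficiently large iterate, which preserves the chromatic nature of the self-map while granting full control over its degree and divisibility.
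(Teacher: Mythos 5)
Your proof is correct and takes essentially the same approach as the paper's: induct on the chromatic type, use the Hopkins--Smith periodicity theorem to produce a $v_{i-1}$ self-map, pass to a square so that \Cref{rmk:sq-fine} guarantees the cofiber has a unital multiplication, and then apply \Cref{thm:ring-quo}, losing one $\E$-degree at each stage. One small factual quibble: your worry in the final paragraph that ``a generic Hopkins--Smith self-map may have odd degree'' is moot --- any $v_h$-self-map has degree a multiple of $|v_h| = 2(p^h - 1)$, hence is automatically even --- so the parity bookkeeping never actually arises, and the paper simply uses $v^2$ uniformly at all primes (and at all $i$) for the same reason you pass to $v^N$.
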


\begin{proof}
  We proceed by induction on $h$.
  Suppose, by induction, that we have an $\E_{n+1}$-algebra structure on a type $h-1$ generalized Moore spectrum $M$.
  The periodicity theorem of \cite{NilpII} guarantees we can find a $v_{h-1}$-self map $v \in \pi_*M$.
  To conclude we apply \Cref{thm:ring-quo} to $v^2$ (see \Cref{rmk:sq-fine}).
  \qedhere
  
  
  
\end{proof}

\appendix

\section{Bar-cobar duality for graded $\E_n$-algebras}
\label{sec:barcobar}

In this appendix we show that Lurie's bar-cobar duality for $\E_n$-algebras in an $\E_n$-monoidal category can be upgraded to an equivalence in the positively graded setting.
Our proof follows \cite[\S4]{ChiralKoszul} closely enough that it is worth pointing out why a simple citation is insufficient.
\begin{enumerate}
\item[(a)] In \cite{ChiralKoszul} the underlying category $\EE$ is \emph{symmetric monoidal}.
\item[(b)] In \cite{ChiralKoszul} Koszul duality has target divided power coalgebras over the Koszul dual operad.
\end{enumerate}
By contrast, Lurie's (iterated) bar-cobar duality has the advantage that it is defined for $\E_n$-monoidal categories, but the disadvantage that it is not immediate that this duality is the same one one would expect to obtain using operadic Koszul duality (together with the Koszul self-duality of the $\E_n$-operad).

\begin{rmk}
  In the long-run the author would like to see this appendix supplanted by an extension of operadic Koszul duality to algebras in categories which are not symmetric monoidal.
  \tqed
\end{rmk}

\begin{cnv}
  Throughout this appendix $\EE$ will denote a stable, presentably $\E_n$-monoidal category.
  \tqed
\end{cnv}

As in the body of the paper 
$\EE^{\Gr}$ denotes the category of graded objects in $\EE$
and this category is also stable and presentably $\E_n$-monoidal.
Crucially for this appendix, in $\EE^{\Gr}$ limits and colimits are computed component-wise.
In particular we have the following lemma:

\begin{lem} \label{lem:sum-prod}
  Given a collection of objects $\{X_\alpha\}_{\alpha \in A}$ in $\EE^\Gr$
  such that only finitely many of the $X_\alpha$ are non-zero in each degree,  
  the natural map
  \[ \textstyle\bigoplus_A X_\alpha \to \prod_A X_\alpha \]
  is an equivalence.
\end{lem}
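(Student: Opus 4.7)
The plan is to check the claim one graded degree at a time. Since $\EE^\Gr$ has all limits and colimits computed pointwise in the grading, a map in $\EE^\Gr$ is an equivalence if and only if it is an equivalence after passing to each graded piece. So it suffices to show that for each fixed degree $k$, the natural map
\[ \Bigl(\textstyle\bigoplus_A X_\alpha\Bigr)_k \longrightarrow \Bigl(\textstyle\prod_A X_\alpha\Bigr)_k \]
is an equivalence in $\EE$.

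Applying pointwise computation, the left side is $\bigoplus_A (X_\alpha)_k$ and the right side is $\prod_A (X_\alpha)_k$, where by hypothesis all but finitely many $(X_\alpha)_k$ are zero. Thus in both cases the (co)product reduces to a finite (co)product over the non-trivial summands.

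The final step is to invoke stability of $\EE$: in a stable category finite coproducts and finite products agree, and the canonical comparison map from the coproduct to the product is an equivalence (this is the standard biproduct structure on a stable category, which follows from semiadditivity). This gives the desired equivalence in degree $k$. Since $k$ was arbitrary, the proof is complete. There is no real obstacle here; the only thing to verify is that the natural comparison map in question is indeed obtained by assembling the pointwise biproduct comparison maps, which is immediate from the functoriality of the degree-$k$ evaluation functor.
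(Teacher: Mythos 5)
Your argument is correct and is the standard proof: reduce to each graded degree (where (co)limits in $\EE^\Gr$ are computed pointwise), observe that the finiteness hypothesis makes each degree a finite (co)product, and conclude using the biproduct property of stable (or merely additive) categories. The paper itself states this lemma without proof, evidently regarding exactly this reasoning as routine, so your write-up fills in precisely the expected argument.
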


We will also need several variants of $\EE^\Gr$

\begin{dfn}
  \ 
  \begin{itemize}
  \item $\EE_{\geq 0}^\Gr$ is the full subcategory of $\EE^\Gr$ on objects which vanish in negative degrees.
  \item $\EE_+^\Gr$ is the full subcategory of $\EE^\Gr_{\o/-/\o}$ on the objects which vanish 
    in negative degrees and are equivalent to the unit in degree $0$.
    We say that the objects of $\EE_+^\Gr$ are \emph{positively graded}.
  \item Given an object $X \in \EE^\Gr_{\o/-/\o}$ we write $\overline{X}$ for the object obtained by splitting off the copy of the unit.\footnote{The functor which sends $X \in \EE_+^\Gr$ to $\overline{X}$ gives an equivalence between $\EE_+^\Gr$ and the category of graded objects concentrated in positve degrees. Note however, that this equivalence is not monoidal.}
  \item We say an object in $\EE^\Gr$ is \emph{thin} if it vanishes in all but finitely many degrees.
    We write $(\EE^\Gr)^{\mathrm{thin}}$, $(\EE_{\geq 0}^\Gr)^{\mathrm{thin}}$ and $(\EE_{+}^\Gr)^{\mathrm{thin}}$
    for the respective subcategories of thin objects.    
  \end{itemize}
  \tqed
\end{dfn}

\begin{rec}
  In \cite[\S 5.2.3]{HA} Lurie constructs a bar-cobar adjunction
  \[ \Barn : \Alg_{\E_n}^{\aug}(\EE^{\Gr}) \rightleftharpoons \mathrm{coAlg}_{\E_n}^{\aug}(\EE^{\Gr}) : \Cobarn \]  
  (see \cite[5.2.3.6 and 5.2.3.9]{HA} specifically).
  Since $\EE_+^\Gr$ is a full subcategory of $\EE^\Gr_{\o/-/\o}$ closed under tensor products, limits and colimits, the bar-cobar adjunction restriction to an adjunction between these subcategories
  \[ \Barn : \Alg_{\E_n}(\EE_+^{\Gr}) \rightleftharpoons \cAlg_{\E_n}(\EE_+^{\Gr}) : \Cobarn \]
  (see \cite[5.2.3.11]{HA}).
  In the case $n=1$, $\Bar$ (resp. $\Cobar$) is computed by a bar (cobar) construction
  \cite[5.2.2.17]{HA}. 
  \tqed
\end{rec}


The main theorem of this appendix is that in the positively graded setting we can upgrade Lurie's bar-cobar adjunction to an equivalence.

\begin{thm} \label{thm:En-bar-cobar}
  The bar-cobar adjunction
  \[ \Barn : \Alg_{\E_n}(\EE_+^{\Gr}) \rightleftharpoons \cAlg_{\E_n}(\EE_+^{\Gr}) : \Cobarn \]
  is an equivalence.  
\end{thm}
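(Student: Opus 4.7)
The plan is to mirror the strategy of Francis--Gaitsgory \cite[\S 4]{ChiralKoszul}, with modifications to accommodate Lurie's iterated bar-cobar formalism and the absence of a symmetric monoidal structure on $\EE$. I would proceed by induction on $n$, with the heart of the argument being the base case. The key external input is the general adjunction $\Barn \dashv \Cobarn$ together with its iterative construction in HA \S 5.2.3.

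For the inductive step, I would invoke Dunn additivity to identify $\Alg_{\E_n}(\EE_+^{\Gr})$ with augmented $\E_1$-algebras in $\mathcal{D} := \Alg_{\E_{n-1}}(\EE^{\Gr})$ whose underlying graded object lies in $\EE^{\Gr}_+$, and factor $\Barn$ as $\Bar^{(n-1)} \circ \Bar$ where the outer $\Bar^{(n-1)}$ applies the inductive hypothesis. I would need to verify that the inner $\E_1$-bar preserves the positively graded condition at the underlying $\EE^{\Gr}$-level (which follows because $\Bar$ of a positively graded augmented $\E_1$-algebra is again positively graded), and that $\mathcal{D}$ inherits a stable presentably $\E_1$-monoidal structure to which the $n=1$ case applies.

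The $n=1$ case is driven by a finiteness observation: for a positively graded augmented $\E_1$-algebra $A$, the tensor power $A^{\otimes k}$ in grading degree $d$ decomposes as a sum over tuples $(d_1,\dots,d_k)$ with $\sum d_i = d$ and $d_i \geq 0$. Tuples with any $d_i = 0$ produce degenerate simplices in the bar object (the corresponding factor being $\o$), and tuples with all $d_i \geq 1$ force $k \leq d$. Consequently $(\Bar A)_d$ is computed by a bounded geometric realization, depends only on $A_{\leq d}$, and reduces to a finite iterated cofiber. A dual analysis handles $(\Cobar C)_d$ as a finite iterated fiber depending only on $C_{\leq d}$. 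I would then prove that the unit $A \to \Cobar \Bar A$ and counit $C \to \Bar \Cobar C$ are equivalences by induction on $d$, with the base case $d=0$ being the unit; the inductive step combines Lemma~\ref{lem:sum-prod} with the standard identification, valid in any stable category, of bounded geometric realizations with their dual bounded totalizations.

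The main obstacle I expect is not the underlying-object identification but rather verifying that the finite (co)limit identifications in each degree respect the coalgebra and algebra structures, so that the unit is an equivalence of augmented $\E_1$-algebras and not merely of graded objects. This requires unwinding Lurie's explicit model for $\Bar$ (HA 5.2.2.17) and checking that, under the sum/product duality of Lemma~\ref{lem:sum-prod}, the face and degeneracy maps of the truncated simplicial bar object dualize correctly to those of the truncated cosimplicial cobar object. Equivalently, one must check that the comultiplication on $\Bar A$ arising from the simplicial structure matches the multiplication that $\Cobar$ reconstructs. Once this compatibility is established in each degree, the two inductive arguments (on $d$ for $n=1$ and on $n$ via Dunn additivity) assemble into the full theorem.
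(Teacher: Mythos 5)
Your overall architecture (induction on $n$ via Dunn-additivity factorization, with the base case $n=1$ driven by the finiteness observation that in each grading degree the bar simplicial object stabilizes after finitely many steps) matches the paper's. But the $n=1$ base case has a genuine gap where it matters most.

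You propose to prove that the unit $A \to \Cobar\Bar A$ is an equivalence by induction on the grading degree $d$, ``combining \Cref{lem:sum-prod} with the standard identification of bounded geometric realizations with their dual bounded totalizations.'' That identification is an abstract fact about any finite simplicial/cosimplicial diagram in a stable category, but it does not dualize $\Cobar$ against $\Bar$: $\Bar A$ is a realization of $A^{\otimes\bullet}$, while $\Cobar \Bar A$ is a totalization of $(\Bar A)^{\otimes\bullet}$, and these are built from genuinely different objects. Nothing in the sketch explains why the composite is the identity, even after reducing to a fixed degree $d$; you have truncated the (co)simplicial diagrams to finite ones, but you still have an honest Koszul-duality computation to do, and the stability argument does not perform it for you. (The difficulty you flag at the end --- matching coalgebra/algebra structure --- is real, but it is downstream of this more basic missing identification of underlying graded objects.)

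The paper avoids this by an orthogonal reduction: $\Cobar$ commutes with sifted colimits (\Cref{lem:cobar-sifted}, again using the finiteness), and every algebra is a geometric realization of free algebras by monadicity, so it suffices to check the unit on free algebras $\o\{X\}$. For free algebras, $\Bar$ is computable, and after embedding into pro-thin objects and base-changing along $\Sp \to \Mod(\Z)$ (\Cref{lem:compare-bar}), the unit equivalence reduces to the classical graded Koszul duality between polynomial and exterior $\Z$-algebras (\Cref{lem:basic-computation}). Conservativity of $\Cobar$ is handled separately via the bar filtration (\Cref{cor:agree-through-k}). If you want to salvage your degree-by-degree approach you would need to explicitly run the combinatorics of the two-sided (co)bar complex in each degree, which is considerably more delicate than the reduction-to-free-algebras route.

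One further small remark on the inductive step: you want to apply the $n=1$ result inside $\mathcal{D} = \Alg_{\E_{n-1}}(\EE^{\Gr})$, but the theorem as stated requires a category of the form $\mathcal{C}_+^{\Gr}$ and it is not immediate that $\mathcal{D}$ with your positivity condition is of that form. The paper sidesteps this by instead showing directly that the intermediate functor $\wt{\Bar}^{(a)}$ is an equivalence, using that the underlying-object functor out of $\Alg_{\E_b/\E_{a+b}}(\EE_+^\Gr)$ is conservative and the relevant square is right adjointable; the unit/counit is then detected on underlying, where the inductive hypothesis applies. Your version would need to verify an identification $\Alg_{\E_{n-1}}(\EE^{\Gr}) \simeq (\Alg_{\E_{n-1}}(\EE))^{\Gr}$ compatible with monoidal structure and positivity before invoking the $n=1$ case.
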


\begin{rmk}
  Similar ideas were considered in \cite{KrauseApp} where, following \cite{ChiralKoszul}, Krause proves the $n=1$, $\EE = \Mod(\Z)$ case of \Cref{thm:En-bar-cobar}.
  \tqed
\end{rmk}



\subsection{The proof of \Cref{thm:En-bar-cobar}}
\label{subsec:bar-cobar-pf}\ 

The proof of \Cref{thm:En-bar-cobar} will proceed by induction on $n$ 
and the bulk of the work lies in handling the base-case $n=1$.
The preparation for this proof will occupy us for the next couple pages.

\begin{cnstr} \label{cnstr:partial-tor-res}
  Given a positively graded $\E_1$-algebra $A$ in $\EE$
  we can consider the bar filtration on the underlying object of $\Bar(A)$,
  \begin{center}
    \begin{tikzcd}
      \displaystyle\colim_{\Delta_{\leq 0}^\op} A^{\otimes \bullet} \ar[d, "\simeq"] \ar[r] &
      \displaystyle\colim_{\Delta_{\leq 1}^\op} A^{\otimes \bullet} \ar[d] \ar[r] &
      \displaystyle\colim_{\Delta_{\leq 2}^\op} A^{\otimes \bullet} \ar[d] \ar[r] &
      \cdots \ar[r] &
      \Bar(A) \\
      \o &
      \Sigma \overline{A} &
      (\Sigma \overline{A})^{\otimes 2}       
    \end{tikzcd} 
  \end{center}
  which is an $\omega$-indexed filtration of the underlying object of $\Bar(A)$ with associated graded given by $(\Sigma \overline{A})^{\otimes k}$.

  Dually, given a positively graded $\E_1$-coalgebra $C$ in $\EE$
  we can consider the cobar filtration of $\Cobar(C)$,
  which is an $\omega$-indexed tower with limit the underlying object of $\Cobar(C)$ and
  associated graded given by $(\Sigma^{-1} \overline{C})^{\otimes k}$.
  \tqed
\end{cnstr}

\begin{rmk} \label{rmk:fin-stabilize}
  The key observation in this appendix is that 
  $(\Sigma^{-1} \overline{C})^{\otimes k}$ is concentrated in degrees $\geq k$,
  therefore in any fixed degree $\Cobar(C)$ is computed by a \emph{finite} limit.
  \tqed
\end{rmk}

\begin{lem} \label{lem:cobar-sifted}
  The functor $\Cobar : \cAlg(\EE) \to \Alg(\EE)$ commutes with sifted colimits.
\end{lem}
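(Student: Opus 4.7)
The plan is to leverage the cobar filtration of Construction~\ref{cnstr:partial-tor-res} together with the key stabilization observation of Remark~\ref{rmk:fin-stabilize}. For a positively graded $\E_1$-coalgebra $C$ the cobar filtration exhibits the underlying object of $\Cobar(C)$ as the limit of an $\omega$-indexed tower whose $k$th associated graded is $(\Sigma^{-1}\overline{C})^{\otimes k}$, an object concentrated in degrees $\geq k$. Hence in any fixed grading degree $d$ the tower stabilizes at stage $k=d$, and $\Cobar(C)_d$ is identified with the finite totalization $\Tot^d(C^{\otimes \bullet})_d$, i.e.\ a \emph{finite} limit in $\EE$.

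I would then argue degreewise. Since sifted colimits in $\EE^{\Gr}_+$ are computed componentwise, and sifted colimits in $\Alg(\EE^{\Gr}_+)$ and $\cAlg(\EE^{\Gr}_+)$ are computed on underlying objects in the stably $\E_n$-monoidal setting, for a sifted diagram $j \mapsto C_j$ it suffices to show that the natural map
\[
\colim_J \Cobar(C_j)_d \longrightarrow \Cobar(\colim_J C_j)_d
\]
is an equivalence in $\EE$ for every $d$. For this I would invoke two commutation principles. First, the $k$-fold tensor power $(-)^{\otimes k} \colon \EE \to \EE$ preserves sifted colimits: this follows from the fact that $\otimes$ preserves colimits separately in each variable, combined with the cofinality of the diagonal $J \to J^{\times k}$ for sifted $J$. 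Second, in a stable presentable $\infty$-category finite limits commute with sifted colimits: filtered colimits commute with finite limits by presentability; a geometric realization is a filtered colimit of finite colimits via the skeletal filtration; and in a stable category finite colimits agree with finite limits up to shift, so finite colimits commute with finite limits.

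Combining the finite-stabilization of the cobar tower in each grading degree (which turns the offending $\lim_\Delta$ into a genuinely finite limit) with these two commutations yields the desired equivalence. The main conceptual obstacle is the stabilization itself, which is precisely Remark~\ref{rmk:fin-stabilize} and relies crucially on the positive-gradedness of $C$; once that is in hand the argument becomes a formal interchange of (co)limits in a stable presentable $\E_n$-monoidal setting.
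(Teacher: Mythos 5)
Your proposal is correct and takes essentially the same route as the paper: reduce to underlying objects, use the degreewise finite stabilization of the cobar tower (Remark~\ref{rmk:fin-stabilize}) to replace $\lim_\Delta$ by a finite limit in each grading degree, then commute the sifted colimit past the finite limit and past the tensor powers (via cofinality of the diagonal). One small quibble: your justification that finite limits commute with sifted colimits is more roundabout than needed and the clause ``filtered colimits commute with finite limits by presentability'' is not true for general presentable categories; in the stable setting the clean argument is simply that finite limits are shifts of finite colimits, which commute with all colimits.
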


\begin{proof}  
  Since the underlying object functor $\Alg(\EE) \to \EE$
  is conservative and commutes with sifted colimits it will suffice to
  show that the composite of $\Cobar$ with this functor commutes with sifted colimits.  

  Suppose we have a sifted diagram $F : D \to \cAlg(\EE)$.
  Using that fact that $\Cobar$ is computed by a cobar construction
  we have equivalences
  \begin{align*}
    \left(\colim_{d \in D} \Cobar( F(d) ) \right)_k    
    &\simeq \colim_{d \in D} \varprojlim_s \left( \lim_{\Delta_{\leq s}} F(d)^{\otimes \bullet} \right)_k 
    \simeq \varprojlim_s \colim_{d \in D} \left( \lim_{\Delta_{\leq s}} F(d)^{\otimes \bullet} \right)_k \\
    &\simeq  \varprojlim_s \lim_{\Delta_{\leq s}} \left( \colim_{d \in D} F(d)^{\otimes \bullet} \right)_k
  \end{align*}
  where the key step is using the fact that the cobar filtration stabilizes in finitely many steps
  (see \Cref{rmk:fin-stabilize}) to commute the colimit and infinite limit.
  Using the assumption that $D$ is sifted and the fact that the tensor product on $\EE^\Gr$ commutes with colimits seperately in each variable we have
  \[ \colim_{d \in D} F(d)^{\otimes s} \simeq \colim_{(d_1,\dots,d_s) \in D^{\times s}} F(d_1) \otimes \cdots \otimes F(d_s) \simeq \left( \colim_{d \in D} F(d) \right)^{\otimes s}. \]
  Feeding this into the previous equivalence we obtain the desired equivalence
  \[ \left(\colim_{d \in D} \Cobar( F(d) ) \right)_k \simeq \varprojlim_s \lim_{\Delta_{\leq s}} \left( \left( \colim_{d \in D} F(d) \right)^{\otimes \bullet} \right)_k \simeq \Cobar \left( \colim_{d \in D} F(d) \right)_k.\]
\end{proof}

\begin{lem} \label{cor:agree-through-k}
  A map of postively graded $\E_1$-algebras $A \to B$ in $\EE$ is an equivalence through degree $k$
  iff the map of postively graded $\E_n$-coalgebras $\Bar(A) \to \Bar(B)$ is an equivalence through degree $k$.

  Dually, a map of postively graded $\E_1$-coalgebras $C \to D$ in $\EE$ is an equivalence through degree $k$ iff the map of postively graded $\E_n$-algebras $\Cobar(C) \to \Cobar(D)$ is an equivalence through degree $k$.
  
  In particular this implies that both $\Bar$ and $\Cobar$ are conservative.  
\end{lem}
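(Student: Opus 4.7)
The plan is to induct on the degree $k$, exploiting the key observation already highlighted in \Cref{rmk:fin-stabilize}: in any fixed degree, both the bar filtration and the cobar filtration from \Cref{cnstr:partial-tor-res} stabilize after finitely many steps.

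Concretely, for a positively graded $\E_1$-algebra $A$, the object $\Bar(A)_k$ inherits a finite increasing filtration whose $s$-th associated graded piece is $(\Sigma \overline{A})^{\otimes s}_k$. Since $\overline{A}$ vanishes in degree $0$, this piece vanishes for $s > k$. Moreover, for $s = 1$ it is simply $\Sigma A_k$, while for $s \geq 2$ its degree-$k$ component is assembled from tensors $\overline{A}_{i_1} \otimes \cdots \otimes \overline{A}_{i_s}$ with $i_1 + \cdots + i_s = k$ and each $i_\ell \geq 1$, which forces every index $i_\ell \leq k-1$. Thus the pieces in filtration $\geq 2$ depend only on the restriction of $A$ to degrees strictly less than $k$.

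Given this, I would run induction on $k$, with trivial base case $k = 0$ since both sides reduce to $\o$. For the forward direction of the inductive step, an equivalence $A \to B$ through degree $k$ induces an equivalence on every associated graded piece of the finite filtration, hence on $\Bar(A)_k \to \Bar(B)_k$. For the reverse direction, suppose $\Bar(A) \to \Bar(B)$ is an equivalence through degree $k$. The inductive hypothesis (applied through degree $k-1$) yields that $A \to B$ is an equivalence through degree $k - 1$, so by the previous paragraph the associated graded pieces in filtrations $s \geq 2$ of $\Bar(A)_k \to \Bar(B)_k$ are equivalences. Combined with the fact that $\Bar(A)_k \to \Bar(B)_k$ is itself an equivalence, this forces the $s = 1$ piece $\Sigma A_k \to \Sigma B_k$ to be an equivalence as well, which gives $A_k \simeq B_k$ and closes the induction.

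The cobar statement follows from the dual argument, applied to the cobar filtration, which is a limit tower whose associated graded in degree $k$ vanishes for $s > k$ for the same reason (namely $(\Sigma^{-1} \overline{C})^{\otimes s}$ is concentrated in degrees $\geq s$). Conservativity of both $\Bar$ and $\Cobar$ then follows formally, since equivalences in $\EE_+^{\Gr}$ can be tested degree-by-degree and the iff above upgrades each such test between source and target. The only real obstacle here is the bookkeeping to ensure that the pieces in filtration $s \geq 2$ really do depend only on lower degrees of $A$; once that is verified, the induction closes immediately.
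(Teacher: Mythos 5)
Your argument is correct and rests on exactly the same key fact the paper uses: the bar filtration on $\Bar(A)_k$ is finite with associated graded $(\Sigma\overline{A})^{\otimes s}_k$, and because $\overline{A}$ is concentrated in degrees $\geq 1$, the pieces with $s \geq 2$ in degree $k$ involve only degrees $< k$ of $A$, while the $s=1$ piece is $\Sigma A_k$. The only difference is organizational. You run an induction on $k$, using the inductive hypothesis to control the $s \geq 2$ graded pieces of $\Bar(A)_k \to \Bar(B)_k$ and then peel the finite filtration from the top down to isolate the $s=1$ piece; the paper avoids induction by fixing the largest degree $k$ through which $A \to B$ is an equivalence, examining the cofiber of $\Bar(A) \to \Bar(B)$ via the filtration, and observing that it vanishes through degree $k$ and is identified with $\cof(\Sigma A_{k+1} \to \Sigma B_{k+1}) \neq 0$ in degree $k+1$. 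The paper's packaging has the small advantage of producing the explicit identification of the cofiber in the first failing degree, but both arguments are the same observation put to work in slightly different ways, and yours closes cleanly.
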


\begin{proof}
  Suppose the map $A \to B$ is an equivalence through degree $k$, but is not an equivalence in degree $k+1$. Examining the bar filtration we see that the cofiber of the map $A \to B$ has a filtration with associated graded given by
  \[ X_{s,j} \coloneqq \cof \left( (\Sigma^s \overline{A}^{\otimes s})_j \to (\Sigma^s \overline{B}^{\otimes s})_j \right) \]
  in degree $j$.
  Using the fact that $\overline{A}$ and $\overline{B}$ are concentrated in degrees $\geq 1$ and the map between them is an equivalence in degrees $\leq k$ we can read off that
  $X_{s,j} = 0$ for $j \leq s+k-1$ and $X_{1,k+1} \simeq \cof(\Sigma \overline{A}_{k+1} \to \Sigma \overline{B}_{k+1})$.
  In particular, this implies that
  \begin{enumerate}
  \item[(a)] the map $\Bar(A) \to \Bar(B)$ is an equivalence through degee $k$ and
  \item[(b)] in degree $k+1$ we have
    \[ \cof(\Bar(A) \to \Bar(B))_{k+1} \simeq \cof(\Sigma \overline{A}_{k+1} \to \Sigma \overline{B}_{k+1}) \not\simeq 0. \]
  \end{enumerate}

  The argument for $\Cobar$ is dual to the argument for $\Bar$.
\end{proof}

\begin{cnstr} \label{cnstr:pro-thin-embed}
  After passing to categories of large pro-objects the natural inclusion
  $(\EE^{\Gr})^{\mathrm{thin}} \to \EE^\Gr$ picks up a left adjoint and 
  we write $e$ for the composite
  \[ e: \EE^\Gr \to \Pro(\EE^\Gr) \to \Pro((\EE^\Gr)^{\mathrm{thin}}) \]
  of this left adjoint with the Yoneda embedding.
  Similarly, we have functors $e_{\geq 0}$ and $e_+$ in the positively graded setting. 
  \tqed
\end{cnstr}

\begin{lem} \label{lem:pro-thin-is-pro-gr}
  There is a natural $\E_n$-monoidal equivalence
  \[ \Pro((\EE^\Gr)^{\mathrm{thin}}) \simeq \Pro(\EE)^{\Gr} \]
  which restricts to a similar equivalence in the positively graded setting.
  In particular this means every object in $\Pro((\EE^\Gr)^{\mathrm{thin}})$ is
  both the coproduct and the product of its components in each degree.
\end{lem}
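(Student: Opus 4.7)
The plan is to construct mutually inverse $\E_n$-monoidal functors by exploiting the direct-sum decomposition of thin graded objects. The key observation is that every $X \in (\EE^\Gr)^{\mathrm{thin}}$ is equivalent to $\bigoplus_{i \in S} X_i(i)$ for some finite $S \subseteq \Z$, where $X_i(i)$ denotes the object $X_i \in \EE$ (viewed as concentrated in degree $0$) shifted to degree $i$. Since the tensor product of thin objects is thin (degrees add, and a finite sumset of finite sets is finite), the $\E_n$-monoidal structure on $\EE^\Gr$ restricts to one on $(\EE^\Gr)^{\mathrm{thin}}$, and the induced $\E_n$-monoidal structure on $\Pro((\EE^\Gr)^{\mathrm{thin}})$ is its unique cofiltered-limit-preserving extension in each variable.

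I would define the forward functor $F : \Pro((\EE^\Gr)^{\mathrm{thin}}) \to \Pro(\EE)^\Gr$ as the termwise degree-$i$ projection for each $i \in \Z$. For the inverse $G$, given $(Y_i)_{i \in \Z}$ with each $Y_i$ presented as a cofiltered limit $\lim_{\alpha \in A_i^{\op}} Y_i^\alpha$, I would form the pro-object indexed by the cofiltered poset of pairs $(S, \{\alpha_i\}_{i \in S})$ consisting of a finite $S \subseteq \Z$ together with choices $\alpha_i \in A_i$ for $i \in S$, sending such a pair to the thin object $\bigoplus_{i \in S} Y_i^{\alpha_i}(i)$. More coherently, $G$ is characterized as the unique cofiltered-limit-preserving functor extending the degree-placement functors $\EE \to (\EE^\Gr)^{\mathrm{thin}}$ and combining them via finite direct sums, so the construction does not depend on the choice of representatives.

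That $F \circ G \simeq \id$ is immediate from the calculation that the degree-$i$ part of $\bigoplus_{i \in S} Y_i^{\alpha_i}(i)$ is $Y_i^{\alpha_i}$ when $i \in S$ and zero otherwise, so the cofinal subsystem with $i \in S$ recovers $Y_i$. The composite $G \circ F \simeq \id$ follows from applying the direct-sum decomposition of thin objects levelwise to a pro-diagram and using that the assembly step is fully coherent. The $\E_n$-monoidality of the equivalence then follows because both tensor products are characterized by preserving cofiltered limits in each variable together with their restriction to the common subcategory $(\EE^\Gr)^{\mathrm{thin}}$; any equivalence matching these data is automatically $\E_n$-monoidal. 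Restricting throughout to diagrams supported in $\Z_{\geq 0}$ gives the positively graded version.

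The final corollary---that every object of $\Pro((\EE^\Gr)^{\mathrm{thin}})$ is both the coproduct and the product of its degree components---is then immediate from the equivalence, since in $\Pro(\EE)^\Gr$ coproducts and products are computed degree-wise and the gradings are mutually disjoint. The main obstacle is bookkeeping: constructing $G$ as a well-defined $\E_n$-monoidal functor at the fully coherent $\infty$-categorical level without picking representatives of pro-objects. This is ultimately handled by invoking the universal property of $\Pro$ as the free completion under cofiltered limits, applied in an $\E_n$-monoidal setting.
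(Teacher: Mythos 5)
Your approach is genuinely different from the paper's, and the high-level idea is sound, but there is a real gap in how you propose to make $G$ (and the $\E_n$-monoidal coherence) precise. The paper does not construct mutually inverse functors by hand at all: it observes that both sides are opposites of huge presentable categories, reduces the claim to identifying the compact objects of $\Ind(\EE^\op)^\Gr$ with $((\EE^\Gr)^{\mathrm{thin}})^{\op}$, and settles this using the Lurie tensor product decomposition $\Ind(\EE^\op)^\Gr \simeq \Sp^\Gr \otimes \Ind(\EE^\op)$, under which compacts are generated by the objects $X(k)$. Once the compacts are matched the whole equivalence, together with its $\E_n$-monoidal structure, comes for free from the functoriality of $\Ind$/$\Pro$ and of Day convolution. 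That move is precisely what buys the coherence your write-up has to assert.

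The specific gap in your argument is the sentence claiming $G$ is ``handled by invoking the universal property of $\Pro$ as the free completion under cofiltered limits.'' That universal property lets you extend a functor \emph{out of} a $\Pro$-category: it produces $F : \Pro((\EE^\Gr)^{\mathrm{thin}}) \to \Pro(\EE)^\Gr$ as the cofiltered-limit-preserving extension of the inclusion, but it says nothing about building a functor \emph{into} $\Pro((\EE^\Gr)^{\mathrm{thin}})$ from $\Pro(\EE)^\Gr$, because $\Pro(\EE)^\Gr$ is not presented to you as a $\Pro$-category of anything (except circularly, via the very equivalence you are trying to prove). Your explicit pro-diagram indexed by pairs $(S,\{\alpha_i\})$ is the right picture but cannot be promoted to a coherent $\infty$-functor by that citation; you would instead have to show that $F$ is fully faithful and essentially surjective directly, or pass to $\Ind$ of opposites as the paper does. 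The $\E_n$-monoidality claim has a similar flavor: ``any equivalence matching these data is automatically $\E_n$-monoidal'' is not a property you can read off -- it requires knowing that the relevant extension of monoidal structures is contractible, which again is exactly what the $\Sp^\Gr\otimes(-)$ description of the paper supplies. Also note that $G\circ F \simeq \id$ is asserted rather than proved; in the compact-generation framing this check is absorbed into the identification of compacts and does not need a separate argument.
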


\begin{proof}
  The key point in this lemma is that both of these catgories are the opposite of a huge presentable category with a large collection of compact objects. To prove the lemma it therefore suffices to argue that
  the full subcategory of compact objects in $\Ind(\EE^\op)^\Gr$ is equivalent to
  $((\EE^\Gr)^{\mathrm{thin}})^{\op}$.
  For this we observe that since
  $\Ind(\EE^\op)^\Gr \simeq \Sp^\Gr \otimes \Ind(\EE^\op)$
  the compact objects are generated under finite colimits by objects of the form $X(k)$ with $X \in \EE^\op$.

  The second conclusion follows from \Cref{lem:sum-prod} which applies since the category in question is now the opposite of a category of graded objects in a (huge) presentable category.
\end{proof}
 
\begin{lem} \label{lem:pro-thin-good}
  The functor $e_+$ of \Cref{cnstr:pro-thin-embed}  
  is fully faithful,
  $\E_n$-monoidal and
  colimit preserving.
\end{lem}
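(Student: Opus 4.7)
The plan is to begin by making the functor $e_+$ completely explicit under the equivalence $\Pro((\EE_+^{\Gr})^{\mathrm{thin}}) \simeq \Pro(\EE)_+^{\Gr}$ from \Cref{lem:pro-thin-is-pro-gr}. Specifically, I would identify $e_+$ with the functor $\EE_+^{\Gr} \to \Pro(\EE)_+^{\Gr}$ sending $X$ to the positively graded pro-object whose degree $k$ component is $y(X_k)$, where $y : \EE \to \Pro(\EE)$ is the ordinary Yoneda embedding and the degree zero component is pinned at $\o$. The cleanest way to see this is via the tower of truncations $\{\tau_{\leq k} X\}_k$: it is a pro-thin object satisfying the universal property of $L(y(X))$ (both mapping into $Z = \lim_\gamma y(Z_\gamma)$ reduce to $\lim_\gamma \Map_{\EE_+^{\Gr}}(X, Z_\gamma)$), and in each fixed degree $j$ this tower stabilizes at $X_j$.

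With this identification in hand, fully faithfulness reduces to the fully faithfulness of ordinary Yoneda combined with a degreewise computation of mapping spaces. Concretely, \Cref{lem:pro-thin-is-pro-gr} lets us compute maps in $\Pro(\EE)_+^{\Gr}$ as products over degrees, so both $\Map_{\EE_+^{\Gr}}(X, Y)$ and $\Map_{\Pro(\EE)_+^{\Gr}}(e_+(X), e_+(Y))$ work out to $\prod_{k \geq 1} \Map_{\EE}(X_k, Y_k)$, with the degree zero component pinned to the identity of $\o$ by the coslice structure.

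For the $\E_n$-monoidal structure, the decisive feature of positive grading is that each degree of a tensor product is a \emph{finite} direct sum $(X \otimes Y)_k = \bigoplus_{i+j = k} X_i \otimes Y_j$. Since $y : \EE \to \Pro(\EE)$ is $\E_n$-monoidal by construction of the tensor product on $\Pro(\EE)$, and since $y$ preserves finite direct sums (because $\EE$ is stable and $y$ preserves finite limits, hence finite colimits), one checks degreewise that $(e_+(X) \otimes e_+(Y))_k = \bigoplus_{i+j=k} y(X_i) \otimes y(Y_j) \simeq y\bigl(\bigoplus_{i+j=k} X_i \otimes Y_j\bigr) = e_+(X \otimes Y)_k$, compatibly with the $\E_n$-operadic coherences.

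For colimit-preservation, the two ingredients are: (i) colimits in both $\EE_+^{\Gr}$ and $\Pro(\EE)_+^{\Gr}$ are computed degreewise in positive degrees, with the augmentation pinning the degree zero component to $\o$; and (ii) the Yoneda $y : \EE \to \Pro(\EE)$ preserves all small colimits. Point (ii) is a direct manipulation of universal properties: for $X = \colim_i X_i$ in $\EE$ and $Y = \lim_\beta y(Y_\beta)$ an arbitrary object of $\Pro(\EE)$, both $\Map_{\Pro(\EE)}(y(\colim_i X_i), Y)$ and $\Map_{\Pro(\EE)}(\colim_i y(X_i), Y)$ evaluate to $\lim_\beta \lim_i \Map_{\EE}(X_i, Y_\beta)$ after commuting iterated limits. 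The main obstacle I anticipate is the explicit identification of $e_+$ together with careful bookkeeping in the positively graded coslice structure; once $e_+$ is correctly described degreewise as $X \mapsto (y(X_k))_k$, the three properties each reduce cleanly to the corresponding properties of ordinary Yoneda.
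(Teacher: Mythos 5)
Your proposal is correct in outline but takes a genuinely different route from the paper, and there is one point where the argument as written is not quite airtight. The paper never explicitly identifies $e_+$: instead it decomposes $X \simeq \bigoplus_j X_j(j)$, uses \Cref{lem:sum-prod} to pass freely between sums and products in both $\EE^\Gr$ and $\Pro(\EE)^\Gr$, and exploits that in pro-objects it is \emph{products} which distribute over $\otimes$. Your approach instead pins down $e_+$ degreewise as $X \mapsto (y(X_k))_k$ via the tower of truncations, and then each of the three assertions reduces to a property of the ordinary Yoneda $y : \EE \to \Pro(\EE)$. Both are valid; yours has the advantage of sidestepping the sum/product bookkeeping and making the functor concrete, while the paper's is closer to being a formal consequence of adjoint functor nonsense. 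Your verification that $y : \EE \to \Pro(\EE)$ preserves small colimits is correct (it is dual to Yoneda into $\Ind$ preserving limits), and your reduction of fully faithfulness to $\prod_{k\geq 1}\Map_\EE(X_k,Y_k)$ on both sides is fine once one invokes the splitting $\EE_+^\Gr \simeq \EE^\Gr_{>0}$ from the footnote after the definition.

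The one place your argument is thin is the $\E_n$-monoidality step. You check degreewise that the underlying objects of $e_+(X \otimes Y)$ and $e_+(X) \otimes e_+(Y)$ agree, and then wave at ``compatibly with the $\E_n$-operadic coherences.'' Constructing an $\E_n$-monoidal structure from scratch out of objectwise equivalences is exactly the kind of thing that requires care. The clean fix---which is what the paper actually does---is to observe first that $e_+$ carries a \emph{canonical} oplax $\E_n$-monoidal structure: the Yoneda embedding $\EE_+^\Gr \to \Pro(\EE_+^\Gr)$ is $\E_n$-monoidal, and the left adjoint $\Pro(\EE_+^\Gr) \to \Pro((\EE_+^\Gr)^{\mathrm{thin}})$ is the left adjoint of an $\E_n$-monoidal functor, hence oplax $\E_n$-monoidal. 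With that in hand, ``$e_+$ is $\E_n$-monoidal'' becomes a \emph{property}, namely that the oplax structure maps $e_+(X\otimes Y) \to e_+(X) \otimes e_+(Y)$ are equivalences, and that can legitimately be checked degreewise using your finite-direct-sum observation. You should add that observation; otherwise the coherence claim is unsupported.
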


\begin{proof}
  The Yoneda embedding $\EE_+^\Gr \to \Pro(\EE_+^\Gr)$ is
  $\E_n$-monoidal and preserves colimits.
  The functor
  \[ \Pro(\EE_+^\Gr) \to \Pro((\EE_+^\Gr)^{\mathrm{thin}}) \]
  is the left adjoint of an $\E_n$-monoidal functor and is therefore oplax monoidal.\footnote{It is easier to think in terms of opposite categories here, since they are presentable and then this a lax monoidal right adjoint.}
  In order to show that $e_+$ is actually $\E_n$-monoidal we use the fact that $e_+$ is $\E_n$-monoidal after restricting to thin objects and \Cref{lem:sum-prod} which lets us convert between sums and product.
  \begin{align*}
    e &(X \otimes Y)
    \simeq e\left( \left(\bigoplus_{j \geq 0} X_k(j) \right) \otimes \left( \bigoplus_{k \geq 0} Y_k(k) \right) \right) 
    \simeq e\left( \bigoplus_{j,k \geq 0}  X_k(j) \otimes Y_k(k) \right) \\
    &\simeq \bigoplus_{j,k \geq 0} e\left( X_k(j) \otimes Y_k(k) \right) 
    \simeq \bigoplus_{j,k \geq 0} e(X_k(j)) \otimes e(Y_k(k)) 
    \simeq \prod_{j,k \geq 0} e(X_k(j)) \otimes e(Y_k(k)) \\
    &\simeq \left( \prod_{j \geq 0 }e(X_k(j)) \right) \otimes \left( \prod_{k \geq 0 } e(Y_k(k)) \right) 
    \simeq \left( \bigoplus_{j \geq 0 }e(X_k(j)) \right) \otimes \left( \bigoplus_{k \geq 0 } e(Y_k(k)) \right) \\
    &\simeq  e \left( \bigoplus_{j \geq 0 } X_k(j) \right) \otimes e \left( \bigoplus_{k \geq 0 } Y_k(k) \right)
  \end{align*}
  
  The key step in these manipulations is the point where we converted the sum over $j,k \geq 0$ into a product over $j,k \geq 0$ and it is at this point that we used our restriction to the positively graded setting.
  Note also that in pro-objects it is products and not sums which distribute over the tensor product.
  The manipulations used to prove that $e$ (and therefore $e_+$) is fully faithful are similar.
  \begin{center}
    \begin{tikzcd}[row sep=small]
      \Map_{\EE^\Gr}\left( \bigoplus_j X_j(j) , \bigoplus_k Y_k(k) \right) \ar[r] \ar[dd, "\simeq"] &
      \Map_{\Pro(\EE)^\Gr}\left( e\left( \bigoplus_j X_j(j) \right), e\left( \bigoplus_k Y_k(k) \right) \right) \ar[d, "\simeq"] \\
       & \Map_{\Pro(\EE)^\Gr}\left(  \bigoplus_j e(X_j(j)), \bigoplus_k e(Y_k(k)) \right) \ar[d, "\simeq"] \\
      \Map_{\EE^\Gr}\left( \bigoplus_j X_j(j) , \prod_k Y_k(k) \right) \ar[r] \ar[d, "\simeq"] &
      \Map_{\Pro(\EE)^\Gr}\left( \bigoplus_j e(X_j(j)), \prod_k e(Y_k(k))  \right) \ar[d, "\simeq"] \\
      \prod_{j,k} \Map_{\EE^\Gr}\left( X_j(j) , Y_k(k) \right) \ar[r] \ar[d, "\simeq"] &
      \prod_{j,k} \Map_{\Pro(\EE)^\Gr}\left( e(X_j(j)), e(Y_k(k))  \right) \ar[d, "\simeq"] \\
      \prod_i \Map_{\EE}\left( X_i , Y_i \right) \ar[r, "\simeq"] &
      \prod_i \Map_{\EE}\left( X_i, Y_i \right)
    \end{tikzcd}
  \end{center}  
\end{proof}

\begin{lem} \label{lem:pro-thin-small}
  $\Bar$ and $\Cobar$ each send objects in the image of $e_+$ to objects in the image of $e_+$.
\end{lem}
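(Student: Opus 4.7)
The plan is to analyze $\Bar(e_+(A))$ and $\Cobar(e_+(C))$ by exploiting the preservation properties of $e_+$ established in \Cref{lem:pro-thin-good}: $e_+$ is fully faithful, $\E_n$-monoidal, and colimit preserving. Under the identification $\Pro((\EE_+^\Gr)^{\mathrm{thin}}) \simeq \Pro(\EE)_+^\Gr$ of \Cref{lem:pro-thin-is-pro-gr}, the essential image of $e_+$ consists precisely of those positively graded objects in $\Pro(\EE)$ whose components all lie in the image of the Yoneda embedding $\EE \hookrightarrow \Pro(\EE)$.

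For $\Bar$ the argument is immediate: the bar construction on an augmented $\E_1$-algebra $A$ is, at the underlying level, a geometric realization (a sifted colimit) of iterated tensor products of $A$. Since $e_+$ is $\E_n$-monoidal and colimit preserving, the natural comparison map $e_+(\Bar(A)) \to \Bar(e_+(A))$ is an equivalence, and in particular $\Bar(e_+(A))$ lies in the essential image of $e_+$.

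For $\Cobar$ the functor $e_+$ will not commute with the defining infinite limit in general, so I would instead proceed degree by degree using \Cref{rmk:fin-stabilize}. In each fixed degree $k$ the cobar filtration stabilizes after finitely many steps, so $\Cobar(e_+(C))_k$ is a \emph{finite} limit of degree $k$ components of tensor powers of $e_+(C)$. Using the $\E_n$-monoidality of $e_+$ together with \Cref{lem:pro-thin-is-pro-gr}, each such degree $k$ component is a finite direct sum (finite because $C$ is positively graded) of tensor products of components of $C$, all viewed inside $\Pro(\EE)$, and therefore lies in the image of $\EE \hookrightarrow \Pro(\EE)$. Since the Yoneda embedding preserves finite limits, $\Cobar(e_+(C))_k$ itself lies in that image, so the assembled graded object $\Cobar(e_+(C))$ lies in the image of $e_+$ (with the same argument identifying it with $e_+(\Cobar(C))$).

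The main obstacle is precisely on the $\Cobar$ side: the naive comparison $e_+(\Cobar(C)) \to \Cobar(e_+(C))$ does not follow from the formal preservation properties of $e_+$, since $\Cobar$ is built from an infinite limit and $e_+$ is essentially a left adjoint composed with Yoneda. The trick that rescues the argument is that by \Cref{rmk:fin-stabilize} the limit is degree-wise finite, at which point the limit-preserving Yoneda embedding $\EE \hookrightarrow \Pro(\EE)$ does the necessary work.
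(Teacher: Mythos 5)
Your proposal is correct and follows essentially the same route as the paper's proof. For $\Bar$ both arguments invoke the $\E_n$-monoidality and colimit-preservation of $e_+$ from \Cref{lem:pro-thin-good}; for $\Cobar$ both arguments invoke the degree-wise finiteness of the cobar tower from \Cref{rmk:fin-stabilize} (equivalently, \Cref{cnstr:partial-tor-res}) to conclude that $\Cobar$ lands in constant pro-objects in each degree, hence in the image of $e_+$.
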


\begin{proof}
  For $\Bar$ this follows from \Cref{lem:pro-thin-good}
  since the bar construction is composed of tensor products and a colimit.
  For $\Cobar$ this follows from the fact that because we are in the positively graded setting
  the tot tower in \Cref{cnstr:partial-tor-res}
  is finite in each degree and therefore $\Cobar(C)$ is a constant pro-object in each degree
  (i.e. it is in the image of $e_+$).
\end{proof}

\begin{lem} \label{lem:compare-bar}
  Suppose we are given a monoidal left adjoint
  \[ f: ((\EE_1)_+^\Gr)^{\mathrm{thin}} \to ((\EE_2)_+^\Gr)^{\mathrm{thin}}, \]
  then there is a (vertically) right adjointable square
  \begin{center}
    \begin{tikzcd}
      \Alg((\EE_1)_+^\Gr) \ar[r, "\wt{f}"] \ar[d, "\Bar"] & 
      \Alg((\EE_2)_+^\Gr) \ar[d, "\Bar"] \\
      \cAlg((\EE_1)_+^\Gr) \ar[r, "\wt{f}"] &
      \cAlg((\EE_2)_+^\Gr).
    \end{tikzcd}
  \end{center}  
\end{lem}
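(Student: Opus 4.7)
The plan is to define $\wt{f}$ as a colimit-preserving extension of $f$ to all of $(\EE_i)_+^\Gr$, verify the stated commutativity using the explicit formula for $\Bar$ from \Cref{cnstr:partial-tor-res}, and finally establish right adjointability by a degree-wise reduction to finite limits.

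First I would extend $f$ to a colimit-preserving monoidal functor $\wt{f}: (\EE_1)_+^\Gr \to (\EE_2)_+^\Gr$ by setting $\wt{f}(X) := \colim_N f(\tau_{\leq N} X)$, where $\tau_{\leq N} X$ denotes the truncation keeping components in degrees $\leq N$ (and the unit in degree $0$). Monoidality of $\wt{f}$ follows from that of $f$ on thin objects together with the observation that in the positively graded setting $\tau_{\leq N}(X \otimes Y) \simeq \tau_{\leq N}(\tau_{\leq N} X \otimes \tau_{\leq N} Y)$, so that the tensor product is compatible with the truncation filtration. This $\wt{f}$ then induces functors on algebras and coalgebras, making the square meaningful. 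Commutativity $\Bar \circ \wt{f} \simeq \wt{f} \circ \Bar$ is then automatic: by \Cref{cnstr:partial-tor-res}, $\Bar(A)$ is computed as a colimit of tensor powers of $\overline{A}$, and both colimits and tensor products are preserved by the monoidal left adjoint $\wt{f}$.

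The substance lies in verifying right adjointability, namely that the Beck--Chevalley mate $\wt{f} \circ \Cobar_1 \to \Cobar_2 \circ \wt{f}$ is an equivalence. I would check this in each graded degree $k$ separately. By \Cref{rmk:fin-stabilize}, the cobar tot tower stabilizes after finitely many steps in degree $k$, so $\Cobar_i(C)_k$ is computed by a finite limit depending only on the thin truncation $\tau_{\leq k} C$. Combined with the fact that $\wt{f}$ preserves the degree filtration (since $f$ is monoidal and unit-preserving, so sends support-$\leq k$ objects to support-$\leq k$ objects) and the identification $\wt{f}|_{\mathrm{thin}} = f$, evaluating the mate in degree $k$ reduces the problem to showing that $f$ commutes with the finite limits computing $\Cobar$ on thin coalgebras.

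The main obstacle is this last verification, and the key observation is that $((\EE_i)_+^\Gr)^{\mathrm{thin}}$ is a stable subcategory, since as an underlying category it is equivalent to finitely supported functors $\mathbb{N}_{\geq 1} \to \EE_i$ (via $X \mapsto \overline{X}$), and $\EE_i$ is stable. A left adjoint between stable categories automatically preserves finite limits, so $f$ commutes with the finite limits arising in $\Cobar$; combined with its monoidality (commuting with the tensor powers appearing in the cobar construction), this yields the desired degree-wise equivalence and completes the proof of right adjointability.
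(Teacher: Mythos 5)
The construction of $\wt{f}$ as a colimit-preserving extension and the use of \cite[5.2.3.11]{HA}-style compatibility of $\Bar$ with monoidal left adjoints match the paper's first steps. But the right-adjointability argument has a genuine gap, concentrated in the claim that ``$\wt{f}$ preserves the degree filtration (since $f$ is monoidal and unit-preserving, so sends support-$\leq k$ objects to support-$\leq k$ objects).'' This is false: a monoidal left adjoint between positively graded categories has no reason to respect the grading beyond sending the unit (in degree $0$) to the unit. Indeed, in the very application this lemma is built for (\Cref{lem:free-unit-eq}), $f^*$ sends the degree-$1$ object $\Ss^0(1)$ to an object $X$ which can be spread across many positive degrees, so $f^*$ does not preserve the degree filtration. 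What monoidality \emph{does} give you is only that $f$ preserves the augmentation ideal (things in degrees $\geq 1$), and hence that $k$-fold tensor powers of the augmentation ideal land in degrees $\geq k$; but this weaker statement does not let you deduce that $\wt{f}(\Cobar(C))_k$ depends only on $\tau_{\leq k}\Cobar(C)$. Since $\wt{f}$ is a left adjoint, it has no reason to commute with the (infinite) limit defining $\Cobar$, and without the degree-preservation claim your reduction to a finite limit in each degree does not go through.

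The observations you do make correctly --- that the cobar tower stabilizes in each degree (\Cref{rmk:fin-stabilize}), that thin positively graded objects form a stable category, and that $f$ therefore preserves finite limits --- are precisely the ingredients the paper exploits, but the paper deploys them differently. Instead of trying to argue degree-wise, the paper passes to pro-thin objects (\Cref{cnstr:pro-thin-embed}, \Cref{lem:pro-thin-good}). In $\Pro((\EE_i^\Gr)^{\mathrm{thin}})$, the functor $\Pro(f)$ preserves \emph{all} small limits, not just finite ones (because $\Pro$ of a stable category is the opposite of a presentable category, and $\Pro(f)$ is simultaneously a left and right adjoint there). This lets one apply \cite[5.2.3.11]{HA} directly to get right-adjointability on the pro-thin level, and then one restricts back via the fully faithful embedding $e_+$ using \Cref{lem:pro-thin-small} (that $\Bar$ and $\Cobar$ preserve the image of $e_+$). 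To salvage your argument you would essentially need to reconstruct this limit-preservation, at which point you have redone the pro-object argument.
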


\begin{proof}
  Kan extending $f$ to all of $\EE_+^\Gr$ we obtain a monoidal left adjoint,
  \[ \wt{f} : (\EE_1)_+^\Gr \to (\EE_2)_+^\Gr. \]
  From \cite[5.2.3.11]{HA} we now obtain the desired square.
  In order to show this square is right adjointable we use
  the embedding into pro-thin objects of \Cref{cnstr:pro-thin-embed}.
  
  The functor $\Pro(f) : \Pro(\EE_1)_+^\Gr \to \Pro(\EE_2)_+^\Gr$
  is an $\E_n$-monoidal left adjoint which preserves all limits.
  Consequently we can apply \cite[5.2.3.11]{HA}
  to obtain a (vertically) right adjointable square
  \begin{center}
    \begin{tikzcd}
      \Alg(\Pro(\EE_1)_+^\Gr) \ar[r, "\Pro(f)"] \ar[d, "\Bar"] & 
      \Alg(\Pro(\EE_2)_+^\Gr) \ar[d, "\Bar"] \\
      \cAlg(\Pro(\EE_1)_+^\Gr) \ar[r, "\Pro(f)"] &
      \cAlg(\Pro(\EE_2)_+^\Gr).
    \end{tikzcd}
  \end{center}  
  Using \Cref{lem:pro-thin-good} and \cite[5.2.3.11]{HA} we can extend this square to a cube
  via the colimit preserving, fully faithful embeddings into pro-thin objects.
  At this point right adjointability follows from \Cref{lem:pro-thin-small}
  which says that $\Cobar$ and $\Bar$ send objects in the image of $e_+$ to objects in the image of $e_+$.
\end{proof}

\begin{lem} \label{lem:basic-computation}
  In $\Sp^\Gr$ the unit map
  $\o\{\Ss^0(1)\} \to \Cobar (\Bar (\o\{ \Ss^0(1)\} ))$
  is an equivalence.
\end{lem}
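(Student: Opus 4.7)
The plan is a direct computation. Write $A = \o\{\Ss^0(1)\}$ for the tensor algebra on $x \in A_1$; its underlying graded spectrum is $\bigoplus_{n \geq 0} \Ss^0(n)$. Multiplication by $x$ gives a length-one free resolution of the augmentation as right $A$-modules
\begin{equation*}
A \otimes \Ss^0(1) \xrightarrow{\,\cdot x\,} A \longrightarrow \o,
\end{equation*}
and applying $\o \otimes_A (-)$ identifies
\begin{equation*}
\Bar(A) \simeq \o \otimes_A \o \simeq \cof\bigl(\Ss^0(1) \to \o\bigr) \simeq \o \oplus \Ss^1(1)
\end{equation*}
as an object of $\Sp^\Gr$, concentrated in gradings $0$ and $1$.

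The next step is to identify the coalgebra structure on $\Bar(A)$ and then to compute $\Cobar$. The reduced comultiplication on $\Bar(A)$ is a map from $\overline{\Bar(A)} = \Ss^1(1)$ (grading $1$) to $\overline{\Bar(A)} \otimes \overline{\Bar(A)} = \Ss^2(2)$ (grading $2$) and therefore vanishes for grading reasons, so $\Bar(A)$ is the primitive coalgebra on $\o \oplus \Ss^1(1)$. Applying the cobar filtration of \Cref{cnstr:partial-tor-res} to $\Bar(A)$, the $n$-th associated graded is $(\Sigma^{-1}\Ss^1(1))^{\otimes n} \simeq \Ss^0(n)$, concentrated entirely in grading $n$. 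By \Cref{rmk:fin-stabilize} the tower stabilises after finitely many steps in each grading, and the vanishing of the reduced comultiplication makes all cosimplicial differentials in the normalised cobar complex zero. Hence in grading $d$ exactly one copy of $\Ss^0$ survives, contributed by filtration step $n = d$, and \Cref{lem:sum-prod} assembles these into $\Cobar\Bar(A) \simeq \bigoplus_{n \geq 0} \Ss^0(n)$, matching the underlying object of $A$.

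It remains to check that the unit $\eta : A \to \Cobar\Bar(A)$ realises this equivalence. Since $A$ is free on $x$, the $\E_1$-algebra map $\eta$ is determined by $\eta(x) \in (\Cobar\Bar A)_1 \simeq \Ss^0$, and unwinding the adjunction shows $\eta(x)$ is the canonical generator produced by the $n = 1$ piece of the cobar filtration. Consequently $\eta$ sends $x^d$ to $\eta(x)^d$, a generator of $(\Cobar\Bar A)_d \simeq \Ss^0$ in every grading, and so is an equivalence on underlying objects. I expect the only subtle points to be the grading-based vanishing of the reduced comultiplication on $\Bar(A)$ and the explicit identification of $\eta(x)$ as the generator; the rest is forced by the positively graded hypothesis together with the finiteness statements of \Cref{cnstr:partial-tor-res} and \Cref{rmk:fin-stabilize}.
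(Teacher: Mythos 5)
Your route is a direct computation, whereas the paper takes an essentially different tack: it applies \Cref{lem:compare-bar} to the monoidal left adjoint $\Sp \to \Mod(\Z)$ and, using that the bar and cobar constructions are levelwise finite, reduces the claim to the classical graded $\Z$-linear Koszul duality between polynomial and exterior algebras. Your computation of $\Bar(A) \simeq \o \oplus \Ss^1(1)$ via the two-term free resolution of the augmentation is correct, as is the observation that the cobar filtration degenerates so that the underlying object of $\Cobar\Bar(A)$ is $\bigoplus_{n \geq 0}\Ss^0(n)$.

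However, there is a genuine gap in your last paragraph. You argue that $\eta$ is an equivalence because $\eta(x)^d$ "is a generator of $(\Cobar\Bar A)_d \simeq \Ss^0$". Granting that $\eta(x)$ generates $(\Cobar\Bar A)_1$, this inference requires knowing that the multiplication map $(\Cobar\Bar A)_1^{\otimes d} \to (\Cobar\Bar A)_d$ is itself an equivalence for every $d$. That is precisely the assertion that $\Cobar$ of the trivial (primitive) coalgebra on $\Ss^1(1)$ is the \emph{free} $\E_1$-algebra on $\Ss^0(1)$ \emph{as an algebra}, not merely as an underlying graded object; your cobar filtration argument pins down the underlying object but is silent on the multiplicative structure. (A related, smaller point: vanishing of the binary reduced comultiplication does not by itself determine the full $\E_1$-coalgebra structure; one needs an argument, dual to the one in the paper's proof of \Cref{lem:cofree-formula}, that all coalgebra structures on $\o \oplus \Ss^1(1)$ coincide.) Establishing the multiplicative identification honestly is comparable in difficulty to what the lemma asserts, which is exactly why the paper base-changes to $\Mod(\Z)$ where the statement is the classical fact that cobar of the exterior coalgebra is the polynomial algebra. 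If you want to complete your direct argument you would need to show that the concatenation product on the normalized cobar complex of a primitive coalgebra realizes the free algebra, and separately that $\eta(x)$ is a unit in $\pi_0 (\Cobar\Bar A)_1$; the latter does follow from the triangle identity together with \Cref{cor:agree-through-k}, but the former is the real work you have not done.
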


\begin{proof}
  
  Applying \Cref{lem:compare-bar} to the map $\Sp \to \Mod(\Z)$ and using the fact that 
  the bar and cobar produce objects which are levelwise finite (see \Cref{cnstr:partial-tor-res})
  it suffices to observe that the unit is an equivalence in the graded $\Z$-linear case
  where this is the usual Koszul duality between polynomial and exterior algebras.
\end{proof}

\begin{lem} \label{lem:free-unit-eq}
  Given an $X \in \EE^\Gr$ concentrated in positive degrees
  the unit map
  $\o\{ X \} \to \Cobar (\Bar (\o \{ X \} ))$
  is an equivalence.
\end{lem}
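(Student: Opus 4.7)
The strategy is to reduce to the base computation of \Cref{lem:basic-computation} via a truncation argument and compatibility with monoidal left adjoints.

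By \Cref{cor:agree-through-k}, it suffices to check the unit map is an equivalence through every fixed degree $k$. Both the source $\o\{X\} = \bigoplus_n X^{\otimes n}$ and the target $\Cobar(\Bar(\o\{X\}))$, restricted to degrees $\le k$, depend only on the truncation $X^{\le k}$ of $X$ to degrees $\le k$: for the source because $X^{\otimes n}$ is concentrated in degrees $\ge n$ (so only finitely many summands contribute in any fixed degree), and for the target by applying \Cref{cor:agree-through-k} to both $\Bar$ and $\Cobar$ in turn. We may therefore assume $X$ is thin, of the form $X = \bigoplus_{i=1}^{\ell} Y_i(d_i)$ with $Y_i \in \EE$ and $d_i \ge 1$.

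For such thin $X$, I would find a stable presentably $\E_1$-monoidal category $\EE_0$, a monoidal left adjoint $f : (\EE_0)^\Gr \to \EE^\Gr$, and a positively graded object $X_0 \in (\EE_0)^\Gr$ with $\wt f(X_0) \simeq X$, for which the unit map is already known to be an equivalence. Since $\wt f$ is a monoidal left adjoint it commutes with the free $\E_1$-algebra functor, and by \Cref{lem:compare-bar} it intertwines $\Bar$ and $\Cobar$; hence the unit map for $X$ is the image under $\wt f$ of the unit map for $X_0$, reducing the problem to $\EE_0$. A natural candidate for $\EE_0$ is obtained by freely adjoining $\ell$ generators to $\Sp$ in the prescribed graded degrees, with $f$ determined by sending the $i^{\mathrm{th}}$ generator to $Y_i$. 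The universal case in turn reduces, via a further application of \Cref{lem:compare-bar} along the monoidal functor $\Sp \to \Mod(\Z)$ (mirroring the proof of \Cref{lem:basic-computation}), to the classical Koszul self-duality identifying the bar construction of a positively graded tensor algebra $T(V)$ with the trivial coalgebra $\o \oplus \Sigma V$ and its cobar dually with $T(V)$.

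The main technical obstacle is the construction of the universal $\E_1$-monoidal category $\EE_0$ together with its universal property: presentably $\E_1$-monoidal free constructions on several generators are less well developed than their symmetric monoidal counterparts, and one needs $f$ to actually be a monoidal left adjoint in order to apply \Cref{lem:compare-bar}. A safer alternative is to compute $\Bar(\o\{X\})$ directly using the bar filtration of \Cref{cnstr:partial-tor-res}, exploiting that on a free algebra a short resolution of $\o$ as an $\o\{X\}$-module collapses the bar construction to $\o \oplus \Sigma X$ with cotrivial comultiplication, and then carrying out the parallel computation on the cobar side. Both arguments benefit from the finite stabilization of the bar and cobar filtrations in the positively graded setting (\Cref{rmk:fin-stabilize}) and from the sifted-colimit compatibility of \Cref{lem:cobar-sifted}.
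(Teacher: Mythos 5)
Your overall strategy is close to the paper's, and your reduction to the thin case via truncation and \Cref{cor:agree-through-k} is a valid alternative to the paper's reduction via filtered colimits and \Cref{lem:cobar-sifted}. But there is a genuine gap in your handling of the thin case, where you set out to build a universal $\E_1$-monoidal category $\EE_0$ on $\ell$ generators in prescribed degrees. This is not only unnecessary, it is the missing idea: you do \emph{not} need to decompose $X = \bigoplus_i Y_i(d_i)$ into components, because $\Sp_{\geq 0}^\Gr$ is already the free presentably $\E_1$-monoidal category on a \emph{single} object $\Ss^0(1)$. The universal property then produces, for a thin positively graded $X \in \EE^\Gr$, a monoidal left adjoint $\wt f^*$ with $\wt f^*(\Ss^0(1)) = X$ and $\wt f^*(\Ss^0(j)) = X^{\otimes j}$; the right adjoint $\wt f_*$ sends $Y$ to $k \mapsto \Map^{\Sp}_{\EE^\Gr}(X^{\otimes k}, Y)$. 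Thinness and positivity of $X$ are exactly what ensure this adjunction restricts to thin positively graded objects, which is what \Cref{lem:compare-bar} requires. One then uses $f^*(\o\{\Ss^0(1)\}) \simeq \o\{X\}$ and \Cref{lem:compare-bar} to reduce everything to \Cref{lem:basic-computation} in one step.

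Your ``safer alternative'' of computing $\Bar(\o\{X\})$ directly by hand is also not as innocuous as you make it sound: identifying the full $\E_1$-coalgebra structure (not just the underlying object) of $\Bar(\o\{X\})$ from the bar filtration requires exactly the kind of coherence argument that the universal-property reduction is designed to avoid. So as written the proposal does not constitute a proof; the correct fix is the single-generator observation above.
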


\begin{proof}
  We begin by handling the case where $X$ is thin.
  Using the fact that $\Sp_{\geq 0}^\Gr$ is the free $\E_1$-monoidal category on an object $\Ss^0(1)$ we can construct a monoidal left adjoint
  \[ \wt{f}^* : \Sp_{\geq 0}^\Gr \to \EE_{\geq 0} \]
  which sends $\Ss^0(1)$ to $X$ whose right adjoint $\wt{f}_*$ sends $Y \in \EE_{\geq 0}^\Gr$ to
  $ \left( k \mapsto \Map_{\EE^\Gr}^\Sp( X^{\otimes k}, Y) \right) $.  
  Since $X$ is thin and concentrated in positive degrees
  $\wt{f}^*$ and $\wt{f}_*$ restrict to an adjunction
  \[ f^* : (\Sp_+^\Gr)^{\mathrm{thin}} \rightleftharpoons (\EE^\Gr_+)^{\mathrm{thin}} : f_*. \]

  Since $f^*$ is a monoidal left adjoint we have $f^* (\o\{Y\}) \simeq \o\{ f^*Y \}$.
  \Cref{lem:compare-bar} now allows us to reduce showing that the unit map
  $\o\{X \} \to \Cobar ( \Bar( \o \{ X \}))$
  is an equivalence to
  \Cref{lem:basic-computation}.
  The general case now follows by writing $X$ as a filtered colimit of thin objects and appealing to \Cref{lem:cobar-sifted}.
\end{proof}

\begin{proof}[Proof (of \Cref{thm:En-bar-cobar}).]
  We proceed by induction on $n$ with $n=1$
  as our base case.

  In order to prove that $\Bar : \Alg(\EE_+^\Gr) \to \cAlg(\EE_+^\Gr)$
  is an equivalence we will show that it is fully faithful and its right adjoint is conservative.
  Recall that we already proved $\Cobar$ is conservative in \Cref{cor:agree-through-k}.
  We prove $\Bar$ is fully faithful by showing that the unit map
  \[ A \to \Cobar ( \Bar ( A )) \]
  is an equivalence for every $A \in \Alg(\EE_+^\Gr)$.
  From \Cref{lem:free-unit-eq} we know the unit is an equivalence when $A$ is a free algebra.
  We also know that both $\Bar$ and $\Cobar$ commute wtih geometric realizations (see \Cref{lem:cobar-sifted}), therefore it suffices to argue that every $A$ can be written as a geometric realization of a simplicial diagram of free algebras. 
  This, in turn, follows from the fact that the free--underlying adjunction on $\Alg(\EE_+^\Gr)$ is monadic.

  Now we handle the inductive step.
  Applying \cite[5.2.3.12 and 5.2.3.14]{HA} we are able to construct the following diagram
  where we have indicated the maps which are equivalences based on our inductive hypothesis.
  \begin{center}    
    \begin{tikzcd}[column sep = small]
      {\Alg_{/\E_{a+b}}(\EE_+^\Gr)} \ar[rr, "{\Bar^{(a+b)}}"] \ar[d, "\simeq"] & &
      {\cAlg_{/\E_{a+b}}(\EE_+^\Gr)} \ar[d, "\simeq"] \\
      {\Alg_{/\E_a} \left( \Alg_{\E_b/\E_{a+b}} (\EE_+^\Gr) \right) } \ar[r, "{\wt{\Bar}^{(a)}}"] \ar[d] &
      \cAlg_{/\E_a}\left(\Alg_{\E_b/\E_{a+b}}(\EE_+^\Gr)\right) \ar[r, "\Bar^{(b)}", "\simeq"'] \ar[d] &
      \cAlg_{/\E_a}\left(\cAlg_{\E_b/\E_{a+b}}(\EE_+^\Gr)\right) \\
      \Alg_{/\E_a}(\EE_+^\Gr) \ar[r, "\Bar^{(a)}", "\simeq"'] &
      \cAlg_{/\E_a}(\EE_+^\Gr) 
    \end{tikzcd}
  \end{center}

  In order to complete the proof we must show that $\wt{\Bar}^{(a)}$ is an equivalence.
  The underlying object functor $\Alg_{\E_b/\E_{a+b}}(\EE_+^\Gr) \to \EE_+^\Gr $ preserves both limits and geometric realizations, therefore by \cite[5.2.3.11]{HA} the bottom square is (horizontally) right adjointable.  
  The vertical arrows in the square are conservative,
  therefore the (co)unit map of the $\wt{\Bar}^{(a)}$--$\wt{\Cobar}^{(a)}$ adjunction is an equivalence at an object $X$ iff it is an equivalence on underlying. This follows from the inductive assumption that $\Bar^{(a)}$ is an equivalence.
\end{proof}

\subsection{Using bar-cobar duality}
\label{subsec:uses}\ 

We end the appendix with a couple lemmas focused on exposing the consequences of bar-cobar duality necessary for \Cref{sec:budget}.

\begin{lem} \label{lem:cofree-formula} \label{lem:cofree-to-sqz}
  The underlying object functor
  $ \cAlg_{\E_n}(\EE_+^\Gr) \to \EE_+^\Gr$
  has a right adjoint
  \[ \mathrm{coFree} : \EE_+^\Gr \to \cAlg_{\E_n}(\EE_+^\Gr) \]
  whose composite with $\Cobarn$ sends an object $X(k)$ to the square-zero algebra
  $\o \oplus \Sigma^{-n}X(k)$.
  The underlying object of $\mathrm{coFree}(Y)$ is given by $\prod_k D_k^{c\E_n}(Y)$.
\end{lem}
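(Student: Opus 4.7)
The argument will proceed in three stages.

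\textbf{Existence.} By Theorem~\ref{thm:En-bar-cobar}, the equivalence $\cAlg_{\E_n}(\EE_+^\Gr) \simeq \Alg_{\E_n}(\EE_+^\Gr)$ exhibits the coalgebra category as presentable (the algebra category being monadic over the presentable $\EE_+^\Gr$). The underlying object functor $U_c$ preserves finite colimits (these are always computed underlying for coalgebras in a stable setting) and sifted colimits (transported from the algebra side via the equivalence, where the forgetful functor is monadic). The adjoint functor theorem then produces the desired right adjoint $\mathrm{coFree}$.

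\textbf{Underlying object.} To identify $U_c(\mathrm{coFree}(Y)) \simeq \prod_k D_k^{c\E_n}(Y)$, I would pass to the fully faithful pro-thin embedding $e_+$ of Construction~\ref{cnstr:pro-thin-embed}. By Lemma~\ref{lem:pro-thin-is-pro-gr} the ambient category is $\Pro(\EE)^\Gr$, whose opposite is an Ind-category of graded objects in $\EE^{\op}$. Under this double duality, cofree $\E_n$-coalgebras correspond to free $\E_n$-algebras in the opposite category, whose classical operadic underlying object $\bigoplus_k D_k^{\E_n}(-)$ dualizes—swapping homotopy orbits for homotopy fixed points and sums for products—to $\prod_k D_k^{c\E_n}(Y)$. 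Lemma~\ref{lem:pro-thin-small} then ensures this expression lies in the essential image of $e_+$ (since, in the positively graded setting, only finitely many $D_k^{c\E_n}(Y)$ contribute to each fixed grading), so it agrees with the underlying object of $\mathrm{coFree}(Y)$ back in $\EE_+^\Gr$.

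\textbf{Image under $\Cobarn$.} Bar–cobar duality reduces the claim to showing $\Barn(\o \oplus \Sigma^{-n}X(k)) \simeq \mathrm{coFree}(X(k))$. Using the universal property of $\mathrm{coFree}$, this follows if I can verify
\[ \Map_{\Alg_{\E_n}}(A,\ \o \oplus \Sigma^{-n}X(k)) \simeq \Map_{\EE^{\Gr}_+}(U_c(\Barn(A)),\ X(k)) \]
naturally in the augmented $\E_n$-algebra $A$. The left side unfolds, via the standard computation of maps into a trivial (square-zero) augmented algebra, to $\Map_{\EE}(\mathrm{Ind}_{\E_n}(A),\ \Sigma^{-n}X(k))$, where $\mathrm{Ind}_{\E_n}$ denotes $\E_n$-indecomposables. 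The classical identification $\mathrm{Ind}_{\E_n}(A) \simeq \Sigma^{-n}\overline{\Barn(A)}$---obtainable by iterating the $n=1$ formula $\mathrm{Ind}_{\E_1}(A) \simeq \Sigma^{-1}\overline{\Bar(A)}$ through Dunn additivity---lets the two $\Sigma^{-n}$ shifts cancel, delivering the desired equivalence.

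The main obstacle will be rigorously justifying the identification $\mathrm{Ind}_{\E_n}(A) \simeq \Sigma^{-n}\overline{\Barn(A)}$ in the positively graded setting, along with ensuring that the iterated bar construction gives the correct $\E_n$-coalgebra structure (not just the correct underlying graded object) on the nose. Once this formula is in place, the rest of the argument is essentially formal manipulation of the bar–cobar adjunction established in Theorem~\ref{thm:En-bar-cobar}.
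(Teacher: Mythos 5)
Your existence argument lands on the same underlying strategy as the paper (pass to pro-thin objects, use presentability there), though your side-remark that preservation of sifted colimits is ``transported from the algebra side via the equivalence'' is confused: the bar--cobar equivalence does \emph{not} intertwine the underlying-object functors on $\Alg$ and $\cAlg$ (that would say $\Cobar$ is the identity on underlying objects), so nothing transports directly. The honest and simpler statement is that the underlying functor on coalgebras preserves \emph{all} colimits in any stable presentably monoidal category, and presentability of $\cAlg$ comes either from your bar--cobar equivalence or, as the paper does, from \cite[3.1.3.13]{HA} applied to the opposite of $\Pro(\EE)_+^\Gr$. Your identification of the underlying object of $\mathrm{coFree}$ is essentially the paper's argument phrased through double opposition.

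The genuine gap is exactly the one you flag in the final paragraph, and it is worth spelling out why the paper does not encounter it. You try to compute $\Cobarn \circ \mathrm{coFree}$ by instead computing $\Barn$ of the square-zero extension via the identification $\mathrm{Ind}_{\E_n}(A) \simeq \Sigma^{-n}\overline{\Barn(A)}$; but that identification is only at the level of underlying objects, and you still must verify that the $\E_n$-\emph{coalgebra} structure on $\Barn(\o \oplus \Sigma^{-n}X(k))$ agrees with the cofree one. That is a nontrivial comparison and you correctly label it the obstacle. The paper sidesteps this entirely by working on the cobar side: it applies $\Cobarn$ to $\widehat{\mathrm{coFree}}$ in pro-thin objects and invokes \cite[5.2.3.15]{HA} to get the underlying object $\o \oplus \Sigma^{-n}e(X(k))$ directly. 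Then, for the \emph{algebra} structure (which is what actually matters on the $\Cobarn$ side), it uses a short rigidity argument that your route lacks: since $X(k)$ sits in a single positive degree $k$, any augmented $\E_n$-algebra with underlying object $\o \oplus \Sigma^{-n}X(k)$ is forced to be the degree-$\leq k$ truncation of a free $\E_n$-algebra on one generator in degree $k$, and hence is the square-zero algebra. This uniqueness-of-structure observation is the missing ingredient that closes the gap; if you want to salvage your approach, the analogous rigidity statement (now about coalgebra structures on $\o \oplus \Sigma^{n}X(k)$) would serve the same role, and is true for the same single-positive-degree reason.
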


\begin{proof}
  $\Pro(\EE)_+^\Gr$
  is the opposite of a presentably $\E_n$-monoidal category and therefore by \cite[3.1.3.13]{HA}
  we have an underlying--cofree adjunction
  \[ \cAlg_{\E_n}\left( \Pro(\EE)_+^\Gr \right) \rightleftharpoons \Pro(\EE)_+^\Gr : \widehat{\mathrm{coFree}} \]
  where $\widehat{\mathrm{coFree}}(X)$ is given by $\prod_k D_k^{c\E_m}(X)$.

  The underlying object functors commutes with the $\E_n$-monoidal embedding $e_+$ of \Cref{cnstr:pro-thin-embed} giving us a square
  \begin{center}
    \begin{tikzcd}
      \cAlg_{\E_n}\left( \EE_+^\Gr \right) \ar[r] \ar[d , "e_+"] &
      \EE_+^\Gr \ar[d, "e_+"] \\
      \cAlg_{\E_n}\left( \Pro(\EE)_+^\Gr \right) \ar[r] &
      \Pro(\EE)_+^\Gr.
    \end{tikzcd}
  \end{center}
  As the two vertical maps are fully faithful,
  the underlying object functor will admit a right adjoint
  if the cofree $\E_n$-coalgebra functor on $\Pro(\EE)_+^\Gr$ sends objects in the image of $e_+$ to objects in the image of $e_+$.
  In order to check this we observe that
  if $X$ is concentrated in positive degrees, then the infinite product $\prod_k D_k^{c\E_n}(e(X))$
  is a finite product in any fixed degree and the limit used to compute $D_k^{c\E_n}(e(X))$ is finite.

  In order to compute $\Cobarn( \mathrm{coFree}(X(k)))$ we again observe that it we compute what this object is in pro-thin objects and the result is in the image of $e_+$, then this is the correct answer.
  In pro-thin objects we can use \cite[5.2.3.15]{HA} to conclude that the underlying object of
  $ \Cobarn (\widehat{\mathrm{coFree}}(e(X(k)))) $ is $\o \oplus \Sigma^{-n}e(X(k))$.
  In particular, this is in the image of $e_+$ as desired.

  In order to identify the algebra structure on $\Cobarn(\mathrm{coFree}(X(k)))$ 
  we observe that all augmented $\E_n$-algebras with underlying object
  $\o \oplus \Sigma^{-n}e(X(k))$
  are equivalent, 
  as they can all be obtained by truncating a free algebra to lie in degrees $\leq k$.
\end{proof}




\begin{cnstr} \label{cnstr:free-res}
  Given a postively graded $\E_n$-algebra $R$ in $\EE$
  we can inductively produce a filtration
  \[ \o_{\EE^{\Gr}} = R^0 \xrightarrow{r_1} R^1 \xrightarrow{r_2} R^2 \to \cdots \to R \]
  converging to $R$ such that
  \begin{enumerate}
  \item the map $R^{k} \to R$ is an equivalence through degree $k$ and
  \item the map $r_{k}$ fits into a pushout square of positively graded $\E_n$-algebras
    \begin{center}
      \begin{tikzcd}
        \o_{\EE^{\Gr}}\left\{ X^k(k) \right\} \ar[r, "\mathrm{aug}"] \ar[d, "s_{k}"] & \o_{\EE^{\Gr}} \ar[d] \\
        R^{k-1} \ar[r, "r_{k}"] & R^{k}. \pushout
      \end{tikzcd}
    \end{center}
    for some object $X^k \in \EE$.
  \end{enumerate}  
  via the following procedure:
  Given $R^{k-1}$ we let $X^k \coloneqq \fib((R^{k-1})_k \to (R^k)_k)$
  this choice of $X^k$ naturally comes equipped with a map
  $X^k \to (R^{k-1})_k$ and a nullhomotopy of the composite $X^k \to (R^{k-1})_k \to (R^k)_k$
  which allows us to define the map $s_k$ and the factorization of the map $R^{k-1} \to R$ through $r_k$.
  Since the free algebra we used started in grading $k$ we have a cofiber sequence
  \[ X^k \to (R^{k-1})_{k} \to (R^{k})_k \]
  from which (1) follows.
  Convergence of the $R^k$ to $R$ follows from condition (1).  
  \tqed
\end{cnstr}

\begin{lem} \label{lem:free-res-terms}
  Let $R$ be a positively graded $\E_n$-algebra in $\EE$.
  We can identify the object $X^k$ from \Cref{cnstr:free-res} with $\Sigma^{-1-n}\Barn(R)_k$.
\end{lem}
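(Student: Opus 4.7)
The plan is to apply $\Barn$ to the inductive pushout of \Cref{cnstr:free-res} and read off the degree $k$ component. Since $\Barn : \Alg_{\E_n}(\EE_+^{\Gr}) \simeq \cAlg_{\E_n}(\EE_+^{\Gr})$ is an equivalence by \Cref{thm:En-bar-cobar} it preserves pushouts, and the underlying-object functor $\cAlg_{\E_n}(\EE_+^{\Gr}) \to \EE_+^{\Gr}$ is a left adjoint by \Cref{lem:cofree-formula}. Hence the pushout of $\E_n$-algebras defining $R^k$ is sent to a pushout of underlying objects in $\EE_+^{\Gr}$:
\[
\Barn(R^k) \simeq \Barn(R^{k-1}) \sqcup_{\Barn(\o\{X^k(k)\})} \o.
\]

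I would next compute $\Barn(\o\{X^k(k)\})_k$. Since the tensor powers $(X^k(k))^{\otimes m}$ sit in degree at least $2k$ for $m \geq 2$, the free algebra $\o\{X^k(k)\}$ coincides with the square-zero extension $\o \oplus X^k(k)$ through degree $2k-1$; the $\E_n$-extension of \Cref{cor:agree-through-k} (which follows from the inductive proof of \Cref{thm:En-bar-cobar}) upgrades this to an equivalence of their bar constructions in the same range. Bar-cobar duality combined with \Cref{lem:cofree-to-sqz} identifies $\Barn(\o \oplus X^k(k))$ with $\mathrm{coFree}(\Sigma^n X^k(k))$, whose degree-$k$ part is $\mathrm{D}_1^{c\E_n}(\Sigma^n X^k(k)) = \Sigma^n X^k$; so $\Barn(\o\{X^k(k)\})_k \simeq \Sigma^n X^k$.

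The crux is then $\Barn(R^{k-1})_k = 0$, for which the essential input is the dual Koszul-duality identification $\Barn(\o\{Y(m)\}) \simeq \o \oplus \Sigma^n Y(m)$: the bar construction of a free positively-graded $\E_n$-algebra is the square-zero $\E_n$-coalgebra, and in particular vanishes in every degree other than $0$ and $m$. This can be obtained by noting that for the square-zero coalgebra $\o \oplus \Sigma^n Y(m)$ the cobar-filtration differentials vanish, so the cobar filtration collapses and identifies the underlying object of $\Cobarn(\o \oplus \Sigma^n Y(m))$ with the free $\E_n$-algebra $\o\{Y(m)\}$, matching the algebra structure as in the argument for \Cref{lem:cofree-to-sqz}. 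Granting this, for each $j < k$ the pushout computation shows $\Barn(R^{j-1})_k \simeq \Barn(R^j)_k$, and iterating from $\Barn(R^0) = \o$ gives $\Barn(R^{k-1})_k = 0$. Combining with the previous step, the pushout yields $\Barn(R^k)_k \simeq \mathrm{cof}(\Sigma^n X^k \to 0) \simeq \Sigma^{n+1} X^k$, and since $R^k \to R$ is an equivalence through degree $k$ the same holds for $\Barn(R^k) \to \Barn(R)$, producing $\Barn(R)_k \simeq \Sigma^{n+1} X^k$ and hence $X^k \simeq \Sigma^{-1-n}\Barn(R)_k$. The main obstacle in this plan is precisely the square-zero identification of $\Barn$ on all free algebras, which extends slightly beyond the range that \Cref{lem:cofree-to-sqz} supplies directly.
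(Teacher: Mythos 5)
Your proposal takes essentially the same route as the paper: apply $\Barn$ to the inductive pushout, use that the underlying-object functor on coalgebras preserves pushouts, identify $\Barn$ of a free $\E_n$-algebra on a single generator with the square-zero $\E_n$-coalgebra, and read off the degree-$k$ component. The paper compresses the square-zero identification $\Barn(\o\{Y(m)\}) \simeq \o \oplus \Sigma^n Y(m)$ into the single citation ``(see \Cref{lem:cofree-to-sqz}),'' and you are right to flag that this lemma literally computes $\Cobarn \circ \mathrm{coFree}$, not $\Barn \circ \mathrm{Free}$ --- the intended reading is the formally dual statement, whose proof mirrors that of \Cref{lem:cofree-to-sqz} (ultimately resting on the same citation to Lurie). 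Two small remarks on your write-up: (i) your degree-by-degree computation of $\Barn(\o\{X^k(k)\})_k$ via \Cref{cor:agree-through-k} is redundant once you invoke the full square-zero identification, which you need anyway for the vanishing of $\Barn(R^{k-1})_k$; (ii) the ``cobar filtration collapses'' justification is phrased in terms of \Cref{cnstr:partial-tor-res}, which is stated only for $n=1$ --- for $n \geq 2$ one should either iterate that argument or, as the paper effectively does, appeal directly to the dual of the proof of \Cref{lem:cofree-to-sqz} (and to the fact that all positively graded coalgebra structures on $\o \oplus \Sigma^n Y(m)$ agree). Neither issue is a gap in the idea, just in the level of detail.
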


\begin{proof}
  Upon applying $\Barn$ to the resolution of \Cref{cnstr:free-res} we obtain a
  resolution of $\Barn(R)$ which at its $k^{\mathrm{th}}$ term is a 
  pushout under the square-zero $\E_n$-coalgebra $\o \oplus \Sigma^n X^k(k)$
  (see \Cref{lem:cofree-to-sqz}).    
  Since pushouts of coalgebras are computed on underlying, this allows us to read off that
  \[ \Barn(R)_k \simeq \Sigma^{n+1} X^k. \qedhere \]
\end{proof}


\bibliographystyle{alpha}
\bibliography{bibliography}

\end{document}